\newtheorem{definition}{Definition}
\newtheorem{proposition}{Proposition}
\newtheorem{lemma}{Lemma}
\newtheorem{theorem}{Theorem}
\newenvironment{proof}{{\bf Proof:}}{$\text{ }\blacksquare$}
\begin{document}

\title{Derived manifolds and Kuranishi models}
\author{Dennis Borisov\\ Mathematical Institute, University of Oxford\\ dennis.borisov@gmail.com}
\date{\today}
\maketitle

\begin{abstract}
\noindent A model structure is defined on the category of derived $C^\infty$-schemes, and it is used to analyse the truncation $2$-functor from derived manifolds to $d$-manifolds. It is proved that the induced $1$-functor between the homotopy categories is full and essentially surjective, giving a bijection between the sets of equivalence classes of objects. An example is constructed, showing that this $1$-functor is not faithful.
\end{abstract}

%%%%%%%%%%%%%%%%%%%%%%%%%%%%%%%%%%%%%%%%%%%%%%%%%%%%%%%%%%%%%%
\section{Introduction}

Having two maps between $C^\infty$-manifolds
	$$\xymatrix{F,G:X\ar@<-.5ex>[r]\ar@<+.5ex>[r] & Y}$$
one is often interested in the locus where $F=G$. In general this locus is not a manifold, and this leads to the theory of derived manifolds (\cite{Sp10},\cite{BN11}).

There is a special class of such pairs of maps, that is particularly important when working with moduli spaces. In these situations $Y$ is a linear bundle on $X$, and $F$ is assumed to be the zero section. This linear structure becomes important when one defines morphisms as commutative diagrams 
	$$\xymatrix{Y_1\ar[rr]^\psi && Y_2\\ X_1\ar[u]^{G_1}\ar[rr]_\phi && X_2\ar[u]_{G_2}}$$
requiring that $\psi$ is a linear bundle morphism. This leads to the notion of a Kuranishi model\footnote{We consider only those models that have trivial isotropy groups.} (e.g. \cite{FO99},\cite{FOOO09},\cite{FOOO12},\cite{MW12}).

\smallskip

This requirement that morphisms between Kuranishi models involve morphisms of linear bundles has important implications when one tries to glue such models into global objects. As everywhere in derived geometry, gluing happens up to equivalences, not isomorphisms, and because of linearity these equivalences are defined using derivations, resulting in the $2$-category of $d$-manifolds (\cite{Jo12}).

On the other hand, without requiring linearity of morphisms, one obtains a category tensored over the category of simplicial sets (\cite{BN11}), i.e. an $\infty$-category. Of course one can always truncate the $\infty$-categorical structure to a $2$-categorical one, by taking the fundamental groupoids of the mapping spaces, however, we show in this paper that the result is not equivalent to the $2$-category of $d$-manifolds. Even the truncations to $1$-categories (obtained by taking the connected components of the mapping spaces) are not equivalent.

\smallskip

The truncated $1$-functor from derived manifolds to $d$-manifolds is full, essentially surjective and gives a bijection between the equivalence classes of objects. However, it is not faithful, i.e. by linearising we lose a lot of homotopically non-trivial information.

To analyse this truncation we develop a homotopy theory of derived manifolds, which can be of independent interest. More precisely, we construct a right pseudo-model category structure on the category of derived $C^\infty$-schemes of finite type. This model structure allows us to compute all the necessary homotopy pullbacks and mapping spaces.

\medskip

Here is the structure of the paper. In section \ref{CosimplicialSection} we recall the definition of cosimplicial $C^\infty$-schemes and define the right pseudo-model structure on the subcategory of schemes of finite type. 

In section \ref{EnrichmentSection} we show that this right pseudo-model category can be supplemented with a right action of the category of simplicial sets, thus giving us the ability to compute mapping spaces.

In section \ref{DerivedManifoldsSubsection} we recall the definition of derived manifolds, and compute explicitly the simplicial $C^\infty$-rings of the $0$-loci of sections of vector bundles over manifolds of finite type. The result is rather simple: $\{C^\infty(E^{\times_{\mathcal M}^k})\}_{k\geq 0}$.

In section \ref{The2CategorySection} we recall the definition of the $2$-category of $d$-manifolds from \cite{Jo12}. Our presentation is slightly different from the original: instead of pulling the sheaves back, we push them forward, and instead of using morphisms out of the sheaves of differential forms we use derivations. 

In section \ref{TruncationSection} we describe the truncation $2$-functor. Here the simplicial model structure, that we have developed for derived manifolds, comes very handy.

Finally, in section \ref{DManifoldsSection} we prove the main result: the truncation functor induces a full and essentially surjective $1$-functor between the homotopy categories. The sets of equivalence classes of objects are in bijective correspondence. At the end of the section we provide an example, showing that this $1$-functor is far from being faithful.

\smallskip

{\it Acknowledgements:} The author would like to thank D.Joyce for many fruitful discussions. Also the author is grateful for the hospitality and financial support of Max Planck Institute for Mathematics in Bonn, where the paper was partially written.

\tableofcontents

%%%%%%%%%%%%%%%%%%%%%%%%%%%%%%%%%%%%%%%%%%%%%%%%%%%%%%%%%%%%%%
\section{Derived manifolds}

%%%%%%%%%%%%%%%%%%%%%%%%%%%%%%%%%%%%%%%%%%%%%%%%%%%%%%%%%%%%%%%%%%%%%%%%%%%%%%%%%%%%%%%%%%%%%%%%%%%%%%%%%%%%%%%%%%%%%%%%%%%%
\subsection{Cosimplicial $C^\infty$-schemes}\label{CosimplicialSection}

Recall (e.g. \cite{MR91}) that \underline{a $C^\infty$-ring} consists of a set $A$, together with operations $A^{\times^n}\rightarrow A$ for all $n\geq 0$, parameterized by smooth functions $\mathbb R^n\rightarrow\mathbb  R$. Equivalently, a $C^\infty$-ring is given by a product preserving functor $\mathfrak E\rightarrow Set$, where $\mathfrak E$ is the category, having $\{\mathbb R^n\}_{n\geq 0}$ as objects, and smooth maps as morphisms.

A morphism of $C^\infty$-rings $A\rightarrow B$ is a set theoretic map, that is compatible with the action of smooth functions. \underline{A simplicial $C^\infty$-ring} is just a simplicial diagram in the category of $C^\infty$-rings, and morphisms of simplicial $C^\infty$-rings are natural transformations. We will denote the categories of $C^\infty$-rings and simplicial $C^\infty$-rings by $C^\infty\mathcal R$ and $SC^\infty\mathcal R$ respectively.
%%%%%%
\begin{definition} Let $A_\bullet$ be a simplicial $C^\infty$-ring. \underline{The spectrum of $A_\bullet$}, denoted by $\mathbf{Sp}(A_\bullet)$, is the pair $(Sp(A_\bullet),\mathcal O_{\bullet,Sp(A_\bullet)})$, where
	\begin{equation}Sp(A_\bullet):=Hom_{C^\infty\mathcal R}(\pi_0(A_\bullet),\mathbb R),\end{equation} 
together with Zariski topology, and $\mathcal O_{\bullet,Sp(A_\bullet)}$ is the sheaf of simplicial $C^\infty$-rings on $Sp(A_\bullet)$, obtained by the sheafification of
	\begin{equation}\xymatrix{U\ar@{|->}[r] & \{A_n\{(s^n\mathcal U)^{-1}\}\}_{n\geq 0},}\end{equation}
where $\mathcal U:=\{f\in A_0\text{ s.t. }p([f])\neq 0\;\forall p\in U\}$, $s^n:A_0\rightarrow A_n$ is the $n$-fold degeneration, and $A_n\{(s^n\mathcal U)^{-1}\}$ denotes localization in the category of $C^\infty$-rings (\cite{MR91}).\end{definition}
%%%%%%
If $A_\bullet$ is a discrete simplicial finitely generated $C^\infty$-ring, i.e. a constant simplicial diagram on $A=C^\infty(\mathbb R^n)/\mathfrak A$, then $\mathbf{Sp}(A_\bullet)$ is just $(X,\mathcal O_X)$, where $X\subseteq\mathbb R^n$ is the set of zeroes of $\mathfrak A$, and $\mathcal O_X$ is the sheaf of germs of smooth functions around $X\subseteq\mathbb R^n$, modulo germs of functions in $\mathfrak A$. 

There is another way of defining the spectrum of a simplicial $C^\infty$-ring $A_\bullet$. Instead of having one topological space $Hom_{C^\infty\mathcal R}(\pi_0(A_\bullet),\mathbb R)$ equipped with a sheaf of simplicial $C^\infty$-rings, one can have a cosimplicial diagram of $C^\infty$-schemes:
	\begin{equation}\{Hom_{C^\infty\mathcal R}(A_n,\mathbb R),\mathcal O_{Sp(A_n)}\}_{n\geq 0}.\end{equation}
However, we would like to consider such cosimplicial diagrams as weakly equivalent, if they have similar structure in the neighbourhood of $Sp(\pi_0(A_\bullet))$ inside $Sp(A_0)$. Thus it is better to work with only one underlying topological space, and we have the following definition.
%%%%%%
\begin{definition} \underline{A cosimplicial $C^\infty$-scheme} is a pair $(X,\mathcal O_{\bullet,X})$, where $X$ is a topological space, and $\mathcal O_{\bullet,X}$ is a sheaf of simplicial $C^\infty$-rings, s.t. locally $(X,\mathcal O_{\bullet,X})$ is isomorphic to spectra of simplicial $C^\infty$-rings.\end{definition}
%%%%%%
Notice that for any $p\in X$, the stalk $(\mathcal O_{0,X})_p$ is a local $C^\infty$-ring.\footnote{A $C^\infty$-ring $A$ is local, if it has a unique maximal ideal $\mathfrak A\subset A$, and $A/\mathfrak A\cong\mathbb R$.} Alternatively, one can define a cosimplicial $C^\infty$-scheme as a simplicially $C^\infty$-ringed space, that is locally {\it weakly equivalent} to spectra of simplicial $C^\infty$-rings (\cite{BN11},\cite{Sp10}). In this case only $(\pi_0(\mathcal O_{\bullet,X}))_p$ would be local. In the important situations ($\pi_0(\mathcal O_{\bullet,X})$ being locally of finite type) these two definitions are locally weakly equivalent (\cite{BN11}).

\medskip

We will say that a simplicial $C^\infty$-ring $A_\bullet$ is \underline{of finite type} if $\pi_0(A_\bullet)$ is a finitely generated $C^\infty$-ring. Such rings have particularly nice spectra, as the following proposition shows.
%%%%%%
\begin{proposition} (\cite{BN11}) Let $A_\bullet$ be a simplicial $C^\infty$-ring of finite type. Then $Sp(A_\bullet)$ is homeomorphic to a locally closed subset of $\mathbb R^n$ for some $n\geq 0$, $\pi_0(\mathcal O_{\bullet,Sp(A_\bullet)})$ is a sheaf of finitely generated $C^\infty$-rings, and $\mathcal O_{0,Sp(A_\bullet)}$ is soft.\end{proposition}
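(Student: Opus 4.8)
The plan is to reduce each of the three assertions to standard facts about finitely generated $C^\infty$-rings, the only real input being the ubiquity of smooth bump functions on $\mathbb R^n$. Write $B:=\pi_0(A_\bullet)$, which by hypothesis is finitely generated, say $B\cong C^\infty(\mathbb R^n)/I$. The underlying set $Sp(A_\bullet)=Hom_{C^\infty\mathcal R}(B,\mathbb R)$ and its Zariski topology depend only on $B$. Precomposition with the surjection $C^\infty(\mathbb R^n)\twoheadrightarrow B$ identifies $Sp(A_\bullet)$ with the set of $C^\infty$-ring homomorphisms $C^\infty(\mathbb R^n)\to\mathbb R$ that kill $I$; since every such homomorphism is evaluation at a point of $\mathbb R^n$ (this is the classical description of $C^\infty$-points via Hadamard's lemma, \cite{MR91}), we obtain a bijection $Sp(A_\bullet)\cong Z(I):=\{x\in\mathbb R^n : f(x)=0\ \forall f\in I\}$.

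Next I would check that under this bijection the Zariski topology corresponds to the Euclidean subspace topology on $Z(I)$. On the one hand, each Zariski basic open $D(f)=\{x\in Z(I) : \bar f(x)\neq 0\}$, for $f\in B$ with representative $\bar f\in C^\infty(\mathbb R^n)$, is open in the subspace topology. On the other hand, given a subspace-open $V\subseteq Z(I)$ and $x\in V$, choose an open $W\subseteq\mathbb R^n$ with $W\cap Z(I)\subseteq V$ and a smooth bump function $\rho$ with $\rho(x)=1$ and $\{\rho\neq 0\}\subseteq W$; then $x\in D(\rho)\subseteq V$. Hence the two topologies agree, and since $Z(I)$ is an intersection of zero sets of smooth functions it is closed, in particular locally closed, in $\mathbb R^n$. (The weaker formulation "locally closed" also accommodates presentations $B\cong C^\infty(\mathbb R^n)\{\bar g^{-1}\}/I$ involving a localization, which realise $Sp(A_\bullet)$ as the locally closed subset $Z(I)\cap\{\bar g\neq 0\}$.)

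For the second assertion, $\pi_0(\mathcal O_{\bullet,Sp(A_\bullet)})$ is the sheaf associated to $U\mapsto B\{\mathcal U^{-1}\}$, i.e. the structure sheaf of the affine $C^\infty$-scheme $Sp(B)$. Over a basic open $D(f)$ its ring of sections is the $C^\infty$-localization $B\{f^{-1}\}$, which is again finitely generated — for instance presented as $C^\infty(\mathbb R^{n+1})/(I,x_{n+1}\bar f-1)$ — and by the previous paragraph such sets form a basis of the topology, so $\pi_0(\mathcal O_{\bullet,Sp(A_\bullet)})$ is a sheaf of finitely generated $C^\infty$-rings.

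Finally, for softness of $\mathcal O_{0,Sp(A_\bullet)}$, I would argue that (taking $A_0$ finitely generated) $\mathcal O_{0,Sp(A_\bullet)}$ is the inverse image $i^{-1}\mathcal O_{Sp(A_0)}$ along the closed embedding $i\colon Sp(B)\hookrightarrow Sp(A_0)$ cut out by $\ker(A_0\twoheadrightarrow B)$. The structure sheaf of the finitely generated $C^\infty$-scheme $Sp(A_0)$ is soft by the classical partition-of-unity argument: $Sp(A_0)$ is a locally closed, hence paracompact and finite-dimensional, subset of some $\mathbb R^m$, and smooth cut-off functions pulled back from $\mathbb R^m$ furnish, for any locally finite open cover, a subordinate partition of unity in the sheaf (\cite{MR91}). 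Since the restriction of a soft sheaf to a closed subspace of a paracompact space is again soft, the claim follows. The only genuinely non-formal ingredients are thus the topology comparison in the second paragraph and the partition-of-unity argument in this last one; both rest on the existence of smooth cut-off functions — the feature distinguishing $C^\infty$-algebraic geometry from the ordinary theory — and both are standard (\cite{MR91},\cite{BN11}).
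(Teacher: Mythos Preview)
The paper does not supply its own proof of this proposition; it merely cites \cite{BN11}. So there is no ``paper's proof'' to compare against, and your sketch is exactly the sort of argument one finds in that reference. The first two parts --- identifying $Sp(A_\bullet)$ with the zero-set $Z(I)\subseteq\mathbb R^n$ via bump functions, and checking that $B\{f^{-1}\}$ stays finitely generated --- are correct and standard.

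There is, however, a small gap in your treatment of softness. You write ``taking $A_0$ finitely generated'', but the hypothesis is only that $\pi_0(A_\bullet)$ is finitely generated; nothing in the definition of ``finite type'' forces $A_0$ itself to be so, and hence $Sp(A_0)$ need not be a reasonable space at all. The fix is easy and stays within your own framework: since $A_0\twoheadrightarrow\pi_0(A_\bullet)=B$ is surjective and $C^\infty(\mathbb R^n)$ is free, the chosen presentation $C^\infty(\mathbb R^n)\twoheadrightarrow B$ lifts to a $C^\infty$-ring map $C^\infty(\mathbb R^n)\to A_0$. Any smooth bump function $\rho$ on $\mathbb R^n$ then has an image $\bar\rho\in A_0$, and if $\rho$ vanishes on a neighbourhood of $p\in Z(I)$ one checks (pick $\tau$ with $\tau(p)=1$ and $\tau\rho=0$; then $\bar\tau$ is invertible in the stalk) that $\bar\rho$ has zero germ at $p$ in $\mathcal O_{0,Sp(A_\bullet)}$. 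This gives genuine partitions of unity in $\mathcal O_{0,Sp(A_\bullet)}$ subordinate to any locally finite cover of $Z(I)$, so the sheaf is fine, hence soft --- without ever invoking $Sp(A_0)$. With this adjustment your argument goes through.
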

%%%%%%
We will work only with simplicial $C^\infty$-rings of finite type. We will also require that the underlying spaces of the cosimplicial $C^\infty$-schemes are second countable and Hausdorff. All these conditions together imply softness of the structure sheaves. This explains the following definition of separability.
%%%%%%
\begin{definition}\begin{itemize}\item A cosimplicial $C^\infty$-scheme $(X,\mathcal O_{\bullet, X})$ is \underline{separated} if $X$ is Hausdorff, and $\mathcal O_{0,X}$ is soft.
\item A cosimplicial $C^\infty$-scheme $(X,\mathcal O_{\bullet,X})$ is \underline{of finite type}, if $\pi_0(\mathcal O_{\bullet,X})$ is a sheaf of finitely generated $C^\infty$-rings.
\item A cosimplicial $C^\infty$-scheme is \underline{locally of finite type}, if locally it is isomorphic to a $C^\infty$-scheme of finite type.\end{itemize}\end{definition}
%%%%%%
We will denote the category of cosimplicial $C^\infty$-schemes by $\mathbf{C^\infty Sch}$.\footnote{The morphisms are defined in the standard way. Notice, that any morphism between local $C^\infty$-rings is automatically local.} We will say that $(X,\mathcal O_{\bullet,X})\in\mathbf{C^\infty Sch}$ is compact, second countable etc., if the space $X$ is such. We will denote by $\mathbf G\subset\mathbf{C^\infty Sch}$ the full subcategory, consisting of separated, second countable cosimplicial $C^\infty$-schemes, locally of finite type. We will also denote by $\mathbf G_{ft}\subset\mathbf G$ the full subcategory of cosimplicial $C^\infty$-schemes of finite type.

\smallskip

The following proposition shows that, just as in the usual $C^\infty$-geometry, most cosimplicial $C^\infty$-schemes are affine.
%%%%%%
\begin{proposition}\label{TheAdjunction} (\cite{BN11}) Let $\mathcal G_{ft}\subset SC^\infty\mathcal R$ be the full subcategory, consisting of simplicial $C^\infty$-rings of finite type. Then $\Gamma\circ\mathbf{Sp}:\mathcal G_{ft}^{op}\rightarrow\mathcal G_{ft}^{op}$ maps weak equivalences to weak equivalences, and in the adjunction
	\begin{equation}\xymatrix{\Gamma:\mathbf G_{ft}\ar@<+.5ex>[r] & \mathcal G_{ft}^{op}:\mathbf{Sp}\ar@<+.5ex>[l]}\end{equation}
the unit $Id_{\mathbf G_{ft}}\rightarrow\mathbf{Sp}\circ\Gamma$ is an isomorphism.\end{proposition}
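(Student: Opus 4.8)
This is, in essence, the affineness theorem of \cite{BN11}, and I sketch its shape. Two things must be checked: that $\Gamma\circ\mathbf{Sp}$ preserves weak equivalences, and that the unit $\eta\colon\mathrm{Id}_{\mathbf G_{ft}}\to\mathbf{Sp}\circ\Gamma$ is a natural isomorphism. The single ingredient used throughout is the softness of the degree-zero structure sheaves, guaranteed by the preceding Proposition together with the standing separatedness and finite-type hypotheses; softness makes $\Gamma$ acyclic, hence exact enough to commute with the colimit-type operations --- $\pi_0$, $C^\infty$-localization, and sheafification --- that enter the definition of $\mathbf{Sp}$.

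I would begin with the unit. Fix $(X,\mathcal O_{\bullet,X})\in\mathbf G_{ft}$, set $A_\bullet:=\Gamma(X,\mathcal O_{\bullet,X})$, and study the canonical morphism $\eta\colon(X,\mathcal O_{\bullet,X})\to\mathbf{Sp}(A_\bullet)$. On underlying spaces, $\pi_0(\mathcal O_{\bullet,X})$ is a sheaf of finitely generated $C^\infty$-rings on the Hausdorff second countable space $X$ with $\mathcal O_{0,X}$ soft, so by classical $C^\infty$-algebraic geometry (\cite{MR91}) the ringed space $(X,\pi_0(\mathcal O_{\bullet,X}))$ is affine, i.e. $X$ is canonically homeomorphic to $\mathrm{Hom}_{C^\infty\mathcal R}(\Gamma(X,\pi_0(\mathcal O_{\bullet,X})),\mathbb R)$. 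Since $\mathcal O_{0,X}$ is soft and each $\mathcal O_{n,X}$ is a sheaf of $\mathcal O_{0,X}$-modules via the degeneracy $s^n$, every $\mathcal O_{n,X}$ is itself soft, hence $\Gamma$-acyclic; therefore $\Gamma$ commutes with the coequalizer computing $\pi_0$, giving $\Gamma(X,\pi_0(\mathcal O_{\bullet,X}))\cong\pi_0(A_\bullet)$. This identifies the underlying space $\mathrm{Hom}_{C^\infty\mathcal R}(\pi_0(A_\bullet),\mathbb R)$ of $\mathbf{Sp}(A_\bullet)$ with $X$, compatibly with $\eta$. For the structure sheaves it suffices to test $\eta$ on the basis of principal opens $X_g:=\{p\in X\colon g(p)\neq 0\}$, $g\in A_0$, where one must match $\mathcal O_{n,X}(X_g)$ with the $C^\infty$-localization $A_n\{(s^n\mathcal U_g)^{-1}\}$ for every $n\geq 0$; this is the level-wise fact that for the soft sheaf of $C^\infty$-rings $\mathcal O_{n,X}$ the $C^\infty$-localization at $g$ coincides with restriction to $X_g$ (\cite{MR91}). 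Naturality in $n$ assembles these into an isomorphism of simplicial sheaves, so $\eta$ is an isomorphism.

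For the preservation of weak equivalences I would describe $\Gamma\circ\mathbf{Sp}$ explicitly. By the preceding Proposition, for $A_\bullet\in\mathcal G_{ft}$ the space $Sp(A_\bullet)$ is a locally closed subset of some $\mathbb R^N$, the sheaf $\pi_0(\mathcal O_{\bullet,Sp(A_\bullet)})$ is finitely generated and $\mathcal O_{0,Sp(A_\bullet)}$ is soft, so $\mathbf{Sp}(A_\bullet)\in\mathbf G_{ft}$ and $\Gamma\circ\mathbf{Sp}$ is well defined. Concretely $\Gamma(\mathbf{Sp}(A_\bullet))_n$ is obtained from $A_n$ by the localization and sheafification along $Sp(A_\bullet)$ that define $\mathbf{Sp}$; since the homotopy groups $\pi_i(A_\bullet)$ are modules over $\pi_0(A_\bullet)$, hence supported as $A_0$-modules on $Sp(A_\bullet)$, neither this localization nor the descent over a principal cover --- the latter collapsing because $\mathcal O_{0,Sp(A_\bullet)}$ is soft --- changes $\pi_*$, so the canonical map $A_\bullet\to\Gamma(\mathbf{Sp}(A_\bullet))$ is a weak equivalence whenever $A_\bullet$ is of finite type (the relevant lemma of \cite{BN11}). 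Given a weak equivalence $A_\bullet\to B_\bullet$ in $\mathcal G_{ft}$, the naturality square
$$\xymatrix{A_\bullet\ar[r]\ar[d] & B_\bullet\ar[d]\\ \Gamma\mathbf{Sp}(A_\bullet)\ar[r] & \Gamma\mathbf{Sp}(B_\bullet)}$$
has its top, left, and right edges weak equivalences, so two-out-of-three makes the bottom edge one as well.

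The real obstacle is the exactness of $\Gamma$ above degree zero: one needs softness of every $\mathcal O_{n,X}$, not just of $\mathcal O_{0,X}$; the correct behaviour of their global sections under $C^\infty$-localization; and the fact that the sheafification built into $\mathbf{Sp}$ does not disturb homotopy type. All of this rests on the finite-type hypothesis --- via the preceding Proposition, which forces $Sp$ to be a tame locally closed Euclidean set --- and on softness propagating up the simplicial degrees along the degeneracy maps; granting these, what remains is a simplicially coherent bookkeeping of standard facts of $C^\infty$-algebraic geometry, carried out in \cite{BN11}.
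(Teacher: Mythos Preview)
The paper does not actually prove this proposition: it is stated with the citation (\cite{BN11}) and no proof environment follows, so the ``paper's own proof'' is simply a reference to \cite{BN11}. Your proposal correctly identifies this as the affineness theorem of that reference and gives a reasonable outline of its two ingredients --- softness propagating up the simplicial degrees via the degeneracies, and the classical affineness of finitely generated $C^\infty$-schemes --- so there is nothing to compare against in the present paper. Your sketch is consistent with how the result is used downstream (in particular with the right pseudo-model structure of Proposition~\ref{PseudoModel}), and the caveats you flag about $\Gamma$ commuting with $\pi_0$ and with $C^\infty$-localization are exactly the points that \cite{BN11} has to address.
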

%%%%%%
Proposition \ref{TheAdjunction} realises $\mathbf G_{ft}$ as a full co-reflective subcategory of $\mathcal G^{op}_{ft}$. The latter is a full subcategory of the category $SC^\infty\mathcal R^{op}$ of all simplicial $C^\infty$-rings, and it is well known (\cite{Qu67}) that $SC^\infty\mathcal R$ is a model category. A morphism $f_\bullet:A_\bullet\rightarrow B_\bullet$ is a \underline{weak equivalence} if it induces bijections
	\begin{equation}\xymatrix{\pi_n(A_\bullet)\ar[r]^{\cong} & \pi_n(B_\bullet), & \forall n\geq 0,}\end{equation}
and $f_\bullet$ is a \underline{fibration} if 
	\begin{equation}\xymatrix{A_\bullet\ar[r] & \pi_0(A_\bullet)\underset{\pi_0(B_\bullet)}\times B_\bullet}\end{equation}
is surjective.\footnote{The right hand side consists of the union of connected components of $\mathcal B_\bullet$, that intersect non-trivially the image of $f_\bullet$.} We would like to transfer this homotopy theory to $\mathbf G_{ft}$.

\smallskip

Since the subcategory $\mathcal G_{ft}^{op}\subset SC^\infty\mathcal R^{op}$ is defined by putting conditions on homotopy types of objects, it is clear that if $A_\bullet\in SC^\infty\mathcal R^{op}$ is weakly equivalent to $B_\bullet\in\mathcal G_{ft}^{op}$, then $A_\bullet\in\mathcal G_{ft}^{op}$. However, it is not true that $\mathcal G_{ft}^{op}$ is a model category in its own right, since it does not have all finite colimits. 

It is typical for geometric situations to have a model structure, but not to have all finite colimits. Such categories are called pseudo-model categories (\cite{TV05}). Our definition is slightly more general than the one in \cite{TV05}, since in $C^\infty$-geometry it can happen that $\Gamma(Sp(A),\mathcal O_{Sp(A)})\ncong A$.
%%%%%%
\begin{definition} \underline{A right pseudo-model category} is given by a model category $\mathcal M$, a full subcategory $\mathcal H\subseteq\mathcal M$, closed with respect to finite limits and weak equivalences in $\mathcal M$, and a full co-reflective subcategory $\mathcal P\subseteq\mathcal H$, with the right adjoint to the inclusion denoted by $\mathcal R:\mathcal H\rightarrow\mathcal P$, s.t. $\mathcal R$ maps cofibrations and weak equivalences to cofibrations and weak equivalences respectively.\footnote{A morphism in $\mathcal P$ is a weak equivalence/cofibration, if it is a weak equivalence/cofibration as a morphism in $\mathcal H$.}\end{definition}
%%%%%%
If we denote by $\widetilde{\mathcal P}\subseteq\mathcal H$ the full subcategory, consisting of objects that are weakly equivalent to objects in $\mathcal P$, we obviously obtain an adjunction 
	\begin{equation}\xymatrix{\subseteq:\mathcal P\ar@<+.5ex>[r] & \widetilde{\mathcal P}:\mathcal R\ar@<+.5ex>[l]}\end{equation}
whose unit is invertible, and whose counit consists of weak equivalences. %Indeed, let $A\in\widetilde{\mathcal P}$, then there is $B\in\mathcal P$ and a weak equivalence $A\rightarrow B$ or $B\rightarrow A$. Let's say it is the former. Then we have a commutative diagram	\begin{equation}\xymatrix{\mathcal R(A)\ar[r]\ar[d] & \mathcal R(B)\ar[d]\\ A\ar[r] & B}\end{equation} where the vertical arrows are given by the counit. Since the horizontal arrows are weak equivalences, and the right vertical arrow is an isomorphism, it is clear that $\mathcal R(A)\rightarrow A$ is a weak equivalence.
Thus we see that the simplicial localization of $\mathcal P$ sits inside the simplicial localization of $\mathcal M$ as a full simplicial subcategory. Moreover, as the following proposition shows, $\mathcal P$ inherits a model structure from $\mathcal M$.\footnote{As in \cite{Ho99}, we distinguish between a model structure on a category, and a model category, the latter being a finitely complete and cocomplete category, together with a model structure.} The proof is straightforward.
%%%%%%
\begin{proposition}\label{ModelStructure} Let $\mathcal C,\mathcal F,\mathcal W\subseteq\mathcal M$ be the subcategories of cofibrations, fibrations, and weak equivalences respectively. Then $(\mathcal C\cap\mathcal P,\mathcal W\cap\mathcal P)$ define a model structure on $\mathcal P$, with the fibrations being retracts of $\mathcal R(\mathcal F)$.\end{proposition}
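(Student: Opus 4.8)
The plan is to verify the three model-category axioms that are not immediate for the pair of distinguished classes $(\mathcal C\cap\mathcal P,\mathcal W\cap\mathcal P)$ on $\mathcal P$, treating fibrations as retracts of morphisms in $\mathcal R(\mathcal F)$, and using throughout the hypotheses that $\mathcal R$ is right adjoint to the inclusion $\mathcal P\hookrightarrow\mathcal H$ and preserves cofibrations and weak equivalences. First I would record the easy points: $\mathcal P$ is closed under finite limits (being co-reflective in $\mathcal H$, which is closed under finite limits in $\mathcal M$) and under finite colimits (a co-reflective subcategory inherits colimits via the left adjoint, i.e. the inclusion, so colimits in $\mathcal P$ are computed in $\mathcal H$; here one uses that $\mathcal H$'s relevant colimits land back in $\mathcal P$ after applying $\mathcal R$ — more precisely, $\mathcal R$ being a right adjoint does not create colimits, so I would instead observe that $\mathcal P$ has finite colimits because the counit gives a coreflection and colimits of diagrams in $\mathcal P$ can be formed by taking the colimit in $\mathcal H$ and applying $\mathcal R$). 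The $2$-out-of-$3$ axiom and the retract axiom for weak equivalences are inherited verbatim from $\mathcal M$; the retract axiom for cofibrations likewise; the retract axiom for fibrations is automatic since fibrations are defined as retracts of $\mathcal R(\mathcal F)$.

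The substantive work is the two lifting axioms and the two factorization axioms. For lifting, I would show that a morphism in $\mathcal P$ is a trivial cofibration (resp. cofibration) in the sense above precisely when it has the left lifting property against all fibrations (resp. trivial fibrations) of $\mathcal P$. One direction uses that $\mathcal R(\mathcal F)$ consists of maps in $\mathcal P$ that, viewed in $\mathcal M$, have the RLP against the image under the inclusion of trivial cofibrations of $\mathcal P$ — here the adjunction $(\subseteq,\mathcal R)$ converts a lifting problem in $\mathcal P$ against $\mathcal R(g)$ into a lifting problem in $\mathcal M$ against $g$, and since the inclusion sends (trivial) cofibrations of $\mathcal P$ to (trivial) cofibrations of $\mathcal M$ (by definition of the classes on $\mathcal P$), the lift exists in $\mathcal M$ and then factors through $\mathcal R$ by adjunction. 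The converse, that every map with the appropriate LLP is a (trivial) cofibration, follows from the retract argument once factorizations are in hand.

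For factorizations, given $f:A\to B$ in $\mathcal P$, I would factor $\iota f$ in $\mathcal M$ as $(\text{trivial cofibration})\circ(\text{fibration})$ and as $(\text{cofibration})\circ(\text{trivial fibration})$, obtaining an intermediate object $Z\in\mathcal M$; since $Z$ sits in a finite limit diagram (a pullback of $B$ along the given maps, or simply because $Z$ is weakly equivalent to objects of $\mathcal H$) one argues $Z\in\mathcal H$, hence $\mathcal R(Z)\in\mathcal P$ makes sense, and the factorization $A\to\mathcal R(Z)\to B$ in $\mathcal P$ has first map a (trivial) cofibration because $\mathcal R$ preserves these, and second map a fibration because it is (a retract of) a map in $\mathcal R(\mathcal F)$ — here one uses the triangle identities to check that $\mathcal R$ applied to the fibration $Z\to B$, composed with the counit, reproduces the map to $B$. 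The hard part will be checking that the intermediate object genuinely lands in $\mathcal H$ and that, after applying $\mathcal R$, the first factor remains a \emph{weak equivalence} and not merely a cofibration in the trivial-cofibration case — i.e. controlling $\mathcal R$ on the acyclic part; this is exactly where the hypothesis that $\mathcal R$ preserves weak equivalences (not just cofibrations) is essential, and where the "straightforward" in the statement hides the only real bookkeeping. Once both factorizations exist, the standard retract argument closes the two remaining implications in the lifting axioms, completing the verification.
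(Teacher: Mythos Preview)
The paper gives no proof of this proposition beyond the phrase ``The proof is straightforward,'' so there is nothing to compare against in detail. Your outline is essentially the standard verification and is correct in its substance: the adjunction $(\subseteq,\mathcal R)$ transports lifting problems against $\mathcal R(g)$ into lifting problems against $g$ in $\mathcal H\subseteq\mathcal M$, where they are solved by the model axioms of $\mathcal M$; and factorizations are obtained by factoring in $\mathcal M$, observing that the intermediate object $Z$ lies in $\mathcal H$ because it is weakly equivalent to either $A$ or $B$ and $\mathcal H$ is closed under weak equivalences, then applying $\mathcal R$ and using that $\mathcal R$ preserves cofibrations and weak equivalences together with the unit being an isomorphism on $\mathcal P$. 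The retract argument closes the loop.

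One correction: your opening paragraph about finite colimits in $\mathcal P$ is both confused and unnecessary. Co-reflectivity (inclusion has a \emph{right} adjoint) gives $\mathcal P$ finite limits via $\mathcal R$ applied to limits in $\mathcal H$, but says nothing useful about colimits; indeed the paper explicitly notes that $\mathcal G_{ft}^{op}$ lacks finite colimits, which is the whole point of the ``pseudo'' in pseudo-model category. Crucially, the proposition claims only a \emph{model structure} on $\mathcal P$, and the paper's footnote at that point distinguishes this from a model category precisely to sidestep completeness/cocompleteness requirements. So you should simply drop that paragraph: the axioms you need to verify are $2$-out-of-$3$, retracts, lifting, and factorization, and none of them require $\mathcal P$ to have colimits.
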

%%%%%%
By definition $\mathcal G_{ft}^{op}\subset SC^\infty\mathcal R^{op}$ is closed with respect to weak equivalences, and it is easy to check that it is closed with respect to all finite limits. The functor $\mathbf{Sp}:\mathcal G_{ft}^{op}\rightarrow\mathbf G_{ft}$ has a left adjoint $\Gamma$, and from Proposition \ref{TheAdjunction} we know that $\mathbf{Sp}$ maps weak equivalences to weak equivalences, and the unit $Id_{\mathbf G_{ft}}\rightarrow\mathbf{Sp}\circ\Gamma$ is an isomorphism.

Finally, it is straightforward to prove that given a fibration $A_\bullet\rightarrow B_\bullet$ of simplicial $C^\infty$-rings, if we invert every $a\in A_0$, $b\in B_0$, that do not vanish on $Sp(\pi_0(A_\bullet))$, $Sp(\pi_0(B_\bullet))$ respectively, we get a fibration again. This means that $\mathbf{Sp}$ maps cofibrations to cofibrations. Altogether we have the following statement.
%%%%%%
\begin{proposition}\label{PseudoModel} The quadruple $(SC^\infty\mathcal R^{op}, \mathcal G^{op}_{ft},\mathbf G_{ft},\mathbf{Sp})$ is a right pseudo-model category.\end{proposition}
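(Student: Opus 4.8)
The plan is to check, ingredient by ingredient, that the quadruple matches the definition of a right pseudo-model category, with $\mathcal M=SC^\infty\mathcal R^{op}$, $\mathcal H=\mathcal G_{ft}^{op}$, $\mathcal P=\mathbf G_{ft}$ (identified with its essential image under $\Gamma$, which is a full embedding because the unit of Proposition \ref{TheAdjunction} is invertible), and $\mathcal R=\mathbf{Sp}$, the right adjoint to $\Gamma$ furnished by the same proposition. That $\mathcal M$ is a model category is classical: $SC^\infty\mathcal R$ is the category of simplicial algebras over an algebraic theory and so carries Quillen's model structure (\cite{Qu67}), with weak equivalences and fibrations as recalled above; it is complete and cocomplete, so its opposite is a model category whose cofibrations are the opposites of the fibrations of $SC^\infty\mathcal R$.

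Next I would verify that $\mathcal H=\mathcal G_{ft}^{op}$ is a full subcategory of $\mathcal M$ closed under weak equivalences and finite limits. Fullness is by definition; closure under weak equivalences holds because the condition cutting out $\mathcal G_{ft}$ — that $\pi_0$ be finitely generated — is invariant under weak equivalence. For finite limits, a finite limit in $\mathcal M^{op}$ is a finite colimit in $SC^\infty\mathcal R$; since $\pi_0$ preserves colimits (being a left adjoint), $\pi_0$ of such a colimit is the corresponding finite colimit of finitely generated $C^\infty$-rings, which is again finitely generated, as coproducts, pushouts and coequalizers of finitely generated $C^\infty$-rings are finitely generated. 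Together with Proposition \ref{TheAdjunction} — which exhibits $\mathbf G_{ft}$ as a full co-reflective subcategory of $\mathcal G_{ft}^{op}$ via $\Gamma$, with co-reflector $\mathbf{Sp}$ — this leaves only one substantive point.

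That point is that $\mathcal R=\mathbf{Sp}$ carries weak equivalences to weak equivalences and cofibrations to cofibrations, where by the convention recorded above a morphism of $\mathbf G_{ft}$ is a weak equivalence (resp. cofibration) precisely when $\Gamma$ of it is one in $\mathcal G_{ft}^{op}$. Weak equivalences are easy: $\mathbf{Sp}(f)$ is a weak equivalence iff $\Gamma\mathbf{Sp}(f)$ is, and $\Gamma\circ\mathbf{Sp}$ preserves weak equivalences by Proposition \ref{TheAdjunction}. For cofibrations, a cofibration in $\mathcal G_{ft}^{op}$ is an arrow corresponding to a fibration $A_\bullet\to B_\bullet$ of simplicial $C^\infty$-rings, and one must show that $\Gamma\mathbf{Sp}$ of it is again a fibration, i.e. that the surjection $A_\bullet\to\pi_0(A_\bullet)\times_{\pi_0(B_\bullet)}B_\bullet$ survives the passage to $\Gamma\mathbf{Sp}$. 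The strategy is: (a) the structure sheaf of $\mathbf{Sp}(A_\bullet)$ is, levelwise and stalkwise, a localization of $A_\bullet$ — and on global sections it is the localization of $A_\bullet$ at the elements of $A_0$ not vanishing on $Sp(\pi_0(A_\bullet))$ — and such localization commutes with $\pi_0$, preserves levelwise surjectivity, and is compatible with the pullback $\pi_0(A_\bullet)\times_{\pi_0(B_\bullet)}B_\bullet$, so it sends the fibration $A_\bullet\to B_\bullet$ to a fibration; (b) the structure sheaves are soft by the Proposition on spectra of finite-type rings together with the standing separability hypotheses, and a levelwise-surjective morphism of soft sheaves is surjective on global sections, which bridges from the sheaf/stalk statement in (a) to the statement about $\Gamma$.

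I expect steps (a) and (b) to be where all the real work lies, the main obstacle being the control of $\Gamma\mathbf{Sp}$: because $\Gamma\mathbf{Sp}(A_\bullet)\ncong A_\bullet$ in general for $C^\infty$-rings, one needs the precise identification of the global sections of the (soft) structure sheaf with the localization at non-vanishing elements, and the verification that this localization is sufficiently exact to preserve the surjectivity encoded in the fibration condition. Everything else is formal bookkeeping against the definition, and once $\mathbf{Sp}$ is shown to preserve cofibrations and weak equivalences the quadruple satisfies the definition on the nose; Proposition \ref{ModelStructure} can then be applied to transport a model structure onto $\mathbf G_{ft}$ itself.
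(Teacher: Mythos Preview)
Your proposal is correct and follows essentially the same route as the paper: the discussion preceding the proposition checks closure of $\mathcal G_{ft}^{op}$ under weak equivalences and finite limits, invokes Proposition~\ref{TheAdjunction} for the co-reflection and preservation of weak equivalences, and then reduces preservation of cofibrations to the claim that localizing a fibration $A_\bullet\to B_\bullet$ at the elements not vanishing on $Sp(\pi_0)$ yields a fibration again. The paper simply calls that last step ``straightforward,'' whereas you (reasonably) unpack it into the two ingredients (a) localization preserving the surjectivity in the fibration condition and (b) softness to pass from stalks to global sections; this is the same argument, just articulated more carefully.
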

%%%%%%
The advantage of working with the model structure on $\mathbf G_{ft}$, instead of $SC^\infty\mathcal R$, is that the notion of fibration in $\mathbf G_{ft}$ is better adapted to geometry. Indeed, as the following proposition shows, starting with a fibrant scheme and restricting to a locally closed subset, we get a fibrant scheme again. The proof is straightforward.
%%%%%%
\begin{proposition}\label{Induction}\begin{itemize}\item[1.] Let $(X,\mathcal O_{\bullet,X})\in\mathbf G_{ft}$, and let $\iota:Y\subseteq X$ be a locally closed subset. Then $(Y,\iota^{-1}(\mathcal O_{\bullet,X}))\in\mathbf G_{ft}$.
\item[2.] If $(X,\mathcal O_{\bullet,X})$ is fibrant, so is $(Y,\iota^{-1}(\mathcal O_{\bullet,X}))$.\end{itemize}\end{proposition}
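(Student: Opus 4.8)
The plan is to treat the two parts separately: part~1 carries the geometric content, and part~2 then follows formally from the description of fibrations in Proposition~\ref{ModelStructure}.

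For part~1, I would first dispose of the easy properties. A subspace of a Hausdorff, second countable space is again Hausdorff and second countable, and the restriction of a soft sheaf to a locally closed subset is soft, since softness is preserved under restriction to closed subsets and to open subsets. The substance is the identification of $\iota^{-1}(\mathcal O_{\bullet,X})$. Since $\pi_0$ is computed on stalks and $(\iota^{-1}\mathcal F)_y=\mathcal F_{\iota(y)}$, one has $\pi_0(\iota^{-1}(\mathcal O_{\bullet,X}))=\iota^{-1}(\pi_0(\mathcal O_{\bullet,X}))$. By Proposition~\ref{TheAdjunction} every object of $\mathbf G_{ft}$ is affine, so I may write $(X,\mathcal O_{\bullet,X})=\mathbf{Sp}(A_\bullet)$ with $A_\bullet$ of finite type; then its space, and hence $Y$, is a locally closed subset of some $\mathbb R^n$. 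The claim I would prove is that $(Y,\iota^{-1}(\mathcal O_{\bullet,X}))=\mathbf{Sp}(A_\bullet\{\mathcal S^{-1}\})$, where $\mathcal S\subseteq A_0$ consists of those elements whose image in $\pi_0(A_\bullet)$, viewed as a function on the space, vanishes nowhere on $Y$. The inclusion $Y\subseteq Sp(A_\bullet\{\mathcal S^{-1}\})$ is clear; conversely, if $p$ lies in the space but not in $Y$, then $|x-p|^2\in C^\infty(\mathbb R^n)$ pulls back into $\mathcal S$ and vanishes at $p$, so the two sets agree; and on germs the localized structure sheaf coincides with $\iota^{-1}(\mathcal O_{\bullet,X})$ because $Y$ is locally closed. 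Finally $\pi_0(A_\bullet\{\mathcal S^{-1}\})$ is a localization of $\pi_0(A_\bullet)$ at functions nonvanishing on $Y$, i.e. germs of smooth functions near a locally closed subset of $\mathbb R^n$ modulo germs of the defining ideal, which is a finitely generated $C^\infty$-ring; hence $A_\bullet\{\mathcal S^{-1}\}$ is of finite type and $(Y,\iota^{-1}(\mathcal O_{\bullet,X}))\in\mathbf G_{ft}$.

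For part~2, I would first unwind the notion of a fibrant object. The terminal object of $\mathbf G_{ft}$ is $\mathbf{Sp}(\mathbb R)$, so by Proposition~\ref{ModelStructure} $(X,\mathcal O_{\bullet,X})$ is fibrant iff its map to the terminal object is a retract of $\mathbf{Sp}(g)$ for a cofibration $g$ of $SC^\infty\mathcal R$; restricting $g$ along the point of its source through which the retract data factors --- a base change, preserved by $\mathbf{Sp}$ since it is a right adjoint --- reduces $g$ to a cofibration out of the initial object $\mathbb R$, i.e. $(X,\mathcal O_{\bullet,X})$ is fibrant iff it is a retract of $\mathbf{Sp}(A_\bullet)$ with $A_\bullet$ a cofibrant simplicial $C^\infty$-ring. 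Restriction to the locally closed subset $Y$ is functorial, hence preserves retracts, and by part~1 it sends $\mathbf{Sp}(A_\bullet)$ to $\mathbf{Sp}(A_\bullet\{\mathcal S^{-1}\})$; so it suffices that $A_\bullet\to A_\bullet\{\mathcal S^{-1}\}$ be a cofibration. This is the same input used, just before Proposition~\ref{PseudoModel}, to see that $\mathbf{Sp}$ preserves cofibrations: localization at one element is a cobase change of $C^\infty(\mathbb R)\to C^\infty(\mathbb R\setminus\{0\})$, and $\mathcal S$-localization is a filtered colimit of such. Consequently $(Y,\iota^{-1}(\mathcal O_{\bullet,X}))$ is a retract of $\mathbf{Sp}$ of a cofibrant simplicial $C^\infty$-ring, hence fibrant.

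The step I expect to be the main obstacle is the local identification in part~1 of $\iota^{-1}(\mathcal O_{\bullet,X})$ with $\mathbf{Sp}$ of a localization: this is exactly where the finite-type and separatedness hypotheses enter, since one needs both that the underlying space embeds in $\mathbb R^n$ (to supply enough cut-off functions, as in the use of $|x-p|^2$ above) and that germs of the structure sheaf along a locally closed subset are correctly computed by the localized ring. Once that is established, the rest --- the interaction of localizations with cofibrations and the bookkeeping with retracts and with Proposition~\ref{ModelStructure} --- is routine.
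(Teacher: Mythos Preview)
Your argument for part~1 is sound; since the paper offers only ``the proof is straightforward,'' you have in fact supplied more detail than the original.

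Part~2, however, has a genuine gap. You reduce to showing that $C_\bullet\to C_\bullet\{\mathcal S^{-1}\}$ is a cofibration in $SC^\infty\mathcal R$, and justify this by appeal to the passage just before Proposition~\ref{PseudoModel}. But that passage says something different: it asserts that localizing a \emph{fibration} $A_\bullet\to B_\bullet$ at the elements invertible on the relevant spectra yields again a fibration --- this is what is needed for $\mathbf{Sp}$ to preserve cofibrations in the \emph{opposite} category. It does not say that a localization map itself is a cofibration, and in fact this fails already for your basic building block $C^\infty(\mathbb R)\to C^\infty(\mathbb R\setminus\{0\})$. Were it a cofibration, the standard factorization and retract arguments would exhibit $C^\infty(\mathbb R\setminus\{0\})$ as a retract, over $C^\infty(\mathbb R)$, of some free extension $C^\infty(\mathbb R^{1+k})$; but the section $C^\infty(\mathbb R\setminus\{0\})\to C^\infty(\mathbb R^{1+k})$ would have to send the unit $x$ to the first coordinate function, which vanishes along $\{0\}\times\mathbb R^k$ and is therefore not a unit. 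So your chain of reductions breaks at the last step.

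A direct lifting argument avoids the issue entirely. By Proposition~\ref{ModelStructure} it suffices to lift any trivial cofibration $\mathbf A\to\mathbf B$ in $\mathbf G_{ft}$ against $(Y,\iota^{-1}\mathcal O_{\bullet,X})\to pt$. Such a map is in particular a weak equivalence, so $\pi_0(\Gamma(\mathbf B))\cong\pi_0(\Gamma(\mathbf A))$; since objects of $\mathbf G_{ft}$ are affine (Proposition~\ref{TheAdjunction}), the underlying continuous map $A\to B$ is a homeomorphism. Given $\mathbf A\to(Y,\iota^{-1}\mathcal O_{\bullet,X})$, compose with the monomorphism $(Y,\iota^{-1}\mathcal O_{\bullet,X})\hookrightarrow(X,\mathcal O_{\bullet,X})$ and use fibrancy of $(X,\mathcal O_{\bullet,X})$ to obtain a lift $\mathbf B\to(X,\mathcal O_{\bullet,X})$; on underlying spaces this lift has the same image as $A\to X$, hence lands in $Y$, and the $\iota^{-1}\dashv\iota_*$ adjunction then factors it uniquely through $(Y,\iota^{-1}\mathcal O_{\bullet,X})$. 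This is presumably the ``straightforward'' argument intended, and it needs nothing about cofibrancy of localizations.
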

%%%%%%
An immediate consequence of Proposition \ref{Induction}, is that every manifold $\mathcal M$ of finite type\footnote{Meaning a manifold $\mathcal M$, s.t. $C^\infty(\mathcal M)$ is a finitely generated $C^\infty$-ring. This is equivalent to being able to embed $\mathcal M$ into $\mathbb R^n$ for some finite $n$.}  is fibrant, as an object of $\mathbf G_{ft}$. Indeed, such $\mathcal M$ is embeddable into some $\mathbb R^n$, and then $\mathcal M$ is a retract of its tubular neighbourhood, which is fibrant, since it is an open subscheme of $\mathbb R^n$.

Another example of a fibration is the trivial bundle $\mathbb R^{n+m}\rightarrow\mathbb R^n$, since it is the spectrum of the free $C^\infty$-morphism $C^\infty(\mathbb R^n)\rightarrow C^\infty(\mathbb R^{n+m})$. Consequently, a trivial bundle over any manifold $\mathcal M$ of finite type is a fibration, since it is a retract of a trivial bundle over some $\mathbb R^n$. Then any vector bundle over $\mathcal M$ is a fibration, since any vector bundle is a retract of a trivial one.

\smallskip

We finish this subsection by noting that any inclusion of a closed cosimplicial $C^\infty$-subscheme is a cofibration, since it corresponds to a surjective morphism of simplicial $C^\infty$-rings. In particular, every $\mathbf X\in\mathbf G_{ft}$ is cofibrant. Note however, that there are cofibrations in $\mathbf G_{ft}$, that are not inclusions of closed subschemes. For example any smooth morphism between manifolds is a cofibration.

%%%%%%%%%%%%%%%%%%%%%%%%%%%%%%%%%%%%%%%%%%%%%%%%%%%%%%%%%%%%%%%%%%%%%%%%%%%%%%%%
\subsection{Enrichment in $SSet$}\label{EnrichmentSection}

The category $SC^\infty\mathcal R$ of simplicial $C^\infty$-rings is a simplicial model category. The \underline{simplicial structure} is defined as follows (\cite{Qu67}, section II.1): for any $A_\bullet\in SC^\infty\mathcal R$, and any $K\in SSet$ we have
	\begin{equation}(A_\bullet\otimes K)_n:=\underset{k_n\in K_n}\coprod A_n,\end{equation}
where the coproduct is taken in the category $C^\infty\mathcal R$ of $C^\infty$-rings. For any weakly order preserving map $f:m\rightarrow n$ in $\Delta$ we have
	\begin{equation}f^*:(A_\bullet\otimes K)_n\rightarrow(A_\bullet\otimes K)_m,\qquad f^*(a_{k_n}):=(f^*(a))_{f^*(k_n)},\end{equation}
where $a\in A_n$, and $a_{k_n}$ belongs to the copy of $A_n$, that is indexed by $k_n\in K_n$.

This gives an enrichment of $SC^\infty\mathcal R$ in $SSet$ by adjunction:
	\begin{equation}\underline{Hom}(A_\bullet,B_\bullet):=\{Hom(A_\bullet\otimes\Delta[k],B_\bullet)\}_{k\geq 0},\end{equation}
and there is the second left adjoint to $\underline{Hom}(-,-)$:
	\begin{equation}\underline{Hom}(A_\bullet,B_\bullet)\cong\{Hom(A_\bullet,B_\bullet^{\Delta[k]})\}_{k\geq 0}.\end{equation}

\smallskip

The category $\mathbf G_{ft}$ does not inherit a simplicial structure from $SC^\infty\mathcal R^{op}$. However, it does inherit a part of it, which is enough for doing homotopical computations. First we give such categories a name. We will say that a simplicial set $S_\bullet$ is \underline{of finite type}, if $\pi_0(S_\bullet)$ is finite. Let $SSet_{ft}\subset SSet$ be the full subcategory, consisting of simplicial sets of finite type.
%%%%%%
\begin{definition} \underline{A right simplicial category} is a category $\mathcal P$, together with bifunctors
	\begin{equation}\xymatrix{\underline{Hom}(-,-):\mathcal P^{op}\times\mathcal P\ar[r] & SSet,}\end{equation}
	\begin{equation}\xymatrix{-^-:\mathcal P^{op}\times SSet_{ft}\ar[r] & \mathcal P,}\end{equation}
s.t. $\forall K,L\in SSet_{ft}$, $\forall A,B\in\mathcal P$ there are natural coherent isomorphisms
	\begin{equation}A^{pt}\cong A,\; (A^K)^L\cong A^{K\times L},\; 
	Hom(A,B^K)\cong Hom(K,\underline{Hom}(A,B)).\end{equation}
\end{definition}
%%%%%%
Just as with the usual simplicial categories, $\underline{Hom}(-,-)$ makes $\mathcal P$ into a $SSet$-category, and $Hom_{\mathcal P}(-,-)\cong\underline{Hom}(-,-)_0$. Similar to the full simplicial case, there is a notion of compatibility between the structures of a right simplicial category and a right pseudo-model category, as follows.
%%%%%%
\begin{definition} \underline{A right simplicial pseudo-model category} consists of\begin{itemize}
\item a simplicial model category $\mathcal M$,
\item a right pseudo-model category $(\mathcal M,\mathcal H,\mathcal P,\mathcal R)$, extending the model structure on $\mathcal M$, and such that $\forall K\in SSet_{ft}$, $\forall A\in\mathcal H$
	\begin{equation}A^K\in\mathcal H,\end{equation}
\item a right simplicial structure on $\mathcal P$, s.t. $\forall A\in\mathcal H$, $\forall K\in SSet_{ft}$ there is a coherent natural isomorphism
	\begin{equation}\label{PreservingSimplicial}\mathcal R(A^K)\cong(\mathcal R(A))^K.\end{equation}
\end{itemize}\end{definition}
%%%%%%
Just as in the full simplicial case, a right simplicial structure on a right pseudo-model category lets us calculate mapping spaces. For $A,B\in\mathcal P$ we define
	\begin{equation}\underline{Hom}(A,B):=\{Hom_{\mathcal P}(A,B^{\Delta[n]})\}_{n\geq 0}.\end{equation}
The following proposition shows that these mapping spaces behave as expected. The proof is standard.
%%%%%%
\begin{proposition}\begin{itemize}\item[1.] Let $(\mathcal M,\mathcal H,\mathcal P,\mathcal R)$ be a right simplicial pseudo-model category. Let $j:A\rightarrow B$, $q:X\rightarrow Y$ be a cofibration and a fibration in $\mathcal P$, respectively. Then
	\begin{equation}\xymatrix{j*q:\underline{Hom}(B,X)\ar[r] & \underline{Hom}(A,X)\underset{\underline{Hom}(A,Y)}\times
	\underline{Hom}(B,Y)}\end{equation}
is a fibration of simplicial sets, which is trivial if $j$ or $q$ is trivial.
\item[2.] Let $A,X\in\mathcal P$, with $A$ being cofibrant and $X$ being fibrant. Then $\underline{Hom}(A,X)$ is a Kan complex, and it is weakly equivalent to the corresponding mapping space in the simplicial localization of $\mathcal P$.\end{itemize}\end{proposition}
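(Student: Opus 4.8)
The plan is to deduce both parts from the corresponding statements in the ambient simplicial model category $\mathcal{M}$, transferring them across the co-reflection $\mathcal{R}$ together with the compatibility isomorphism \eqref{PreservingSimplicial}. Fix notation: for $A,B\in\mathcal{P}$ write $\underline{Hom}_{\mathcal{P}}(A,B)$ for the simplicial set $\{Hom_{\mathcal{P}}(A,B^{\Delta[n]})\}_{n\geq 0}$, and recall from the excerpt that $Hom_{\mathcal{P}}(A,B^{K})\cong Hom(K,\underline{Hom}_{\mathcal{P}}(A,B))$ for $K\in SSet_{ft}$, so that $\underline{Hom}_{\mathcal{P}}(A,B)$ represents the internal hom bifunctor restricted to finite-type simplicial sets. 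The key preliminary observation is that, because $\mathcal{P}\subseteq\mathcal{H}$ is a full subcategory closed under the relevant operations and $\mathcal{R}$ is right adjoint to the inclusion, one has a natural isomorphism $\underline{Hom}_{\mathcal{P}}(A,X)\cong\underline{Hom}_{\mathcal{M}}(A,X)$ for $A,X\in\mathcal{P}$: indeed $Hom_{\mathcal{P}}(A,X^{\Delta[n]})=Hom_{\mathcal{H}}(A,X^{\Delta[n]})$ since $\mathcal{P}$ is full in $\mathcal{H}$, and $X^{\Delta[n]}$ computed in $\mathcal{P}$ agrees (via \eqref{PreservingSimplicial}, applied with $X=\mathcal{R}(X)$ since $X\in\mathcal{P}$) with $\mathcal{R}$ of the cotensor computed in $\mathcal{H}$, while on $\mathcal{H}$ cotensoring by $\Delta[n]$ is the restriction of the simplicial structure on $\mathcal{M}$ and $\mathcal{H}$ is closed under such cotensors. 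Hence $\underline{Hom}_{\mathcal{P}}$ is the restriction of the simplicial mapping space of $\mathcal{M}$.

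For part 1, I would argue as follows. By the adjunction defining the simplicial structure, proving that $j\ast q$ is a fibration of simplicial sets is equivalent, by the standard $SM7$-style manipulation, to showing that for every cofibration $i:K\hookrightarrow L$ of simplicial sets the pushout-product/pullback map associated to $i$, $j$, $q$ has the appropriate lifting property; since all the $\underline{Hom}$'s in sight are restrictions of the $\mathcal{M}$-mapping spaces by the observation above, and since $j$, $q$ are a cofibration and a fibration in $\mathcal{P}$, hence (by Proposition~\ref{ModelStructure} and the definition of the model structure on $\mathcal{P}$ as the restriction of that on $\mathcal{M}$) a cofibration and — up to retract — the image under $\mathcal{R}$ of a fibration, the statement reduces to the known $SM7$ axiom for the simplicial model category $\mathcal{M}$. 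One must only check that the cotensors $B^K$, $B^L$, $A^K$, $A^L$ appearing in the bases of these fibrations stay inside $\mathcal{P}$ (so that the diagram lives in $\mathcal{P}$ and the $\mathcal{P}$-mapping spaces are the ones being compared); this is exactly the clause $A^K\in\mathcal{H}$ in the definition of a right simplicial pseudo-model category together with $\mathcal{R}(A^K)\cong(\mathcal{R}A)^K$ for $A\in\mathcal{P}$, forcing $A^K$ back into $\mathcal{P}$. The triviality clause is the same argument with one of $j$, $q$ trivial, using that $\mathcal{R}$ preserves trivial cofibrations (it preserves cofibrations and weak equivalences) and that fibrations in $\mathcal{P}$ that are weak equivalences map, under the identification, to trivial fibrations in $\mathcal{M}$.

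For part 2, take $A$ cofibrant and $X$ fibrant in $\mathcal{P}$. Apply part 1 with $j$ the cofibration $\emptyset\rightarrow A$ (the initial object exists in $\mathcal{P}$ and is cofibrant since $A$ is, or directly: $A$ cofibrant) and $q$ the fibration $X\rightarrow \ast$ (the terminal object; $X$ fibrant): then $j\ast q$ is the map $\underline{Hom}_{\mathcal{P}}(A,X)\rightarrow \underline{Hom}_{\mathcal{P}}(\emptyset,X)\times_{\underline{Hom}_{\mathcal{P}}(\emptyset,\ast)}\underline{Hom}_{\mathcal{P}}(A,\ast)\cong \ast$, so $\underline{Hom}_{\mathcal{P}}(A,X)$ is fibrant over a point, i.e. a Kan complex. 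For the second assertion, I would invoke that the simplicial localization of $\mathcal{P}$ is a full simplicial subcategory of the simplicial localization of $\mathcal{M}$ (noted in the excerpt just before Proposition~\ref{ModelStructure}, via the adjunction $\mathcal{P}\rightleftarrows\widetilde{\mathcal{P}}$ with invertible unit and counit a weak equivalence), and that the identification $\underline{Hom}_{\mathcal{P}}(A,X)\cong\underline{Hom}_{\mathcal{M}}(A,X)$ of the observation above is natural; since $A$ is cofibrant and $X$ is fibrant in $\mathcal{M}$ (restriction of the model structure), the simplicial mapping space $\underline{Hom}_{\mathcal{M}}(A,X)$ is the derived mapping space and hence is weakly equivalent to the hom-complex in the simplicial localization of $\mathcal{M}$, which by fullness coincides with the one in the simplicial localization of $\mathcal{P}$.

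The step I expect to be the main obstacle is the very first observation — pinning down precisely that $B^{\Delta[n]}$ computed via the right simplicial structure on $\mathcal{P}$ coincides, under $\mathcal{R}$, with the honest cotensor in $\mathcal{M}$, and hence that $\underline{Hom}_{\mathcal{P}}$ is literally the restriction of $\underline{Hom}_{\mathcal{M}}$ to cofibrant–fibrant (in fact arbitrary) objects of $\mathcal{P}$. Everything downstream is then a formal transport of the $SM7$ axiom and of the standard fact that simplicial mapping spaces between cofibrant and fibrant objects model derived mapping spaces; the coherence bookkeeping for the natural isomorphisms $A^{pt}\cong A$, $(A^K)^L\cong A^{K\times L}$ is routine. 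One should also be mildly careful that $SSet_{ft}$, not all of $SSet$, is allowed as exponent, but the standard $SM7$ reductions only ever use finite simplicial sets $\Delta[n]$, $\partial\Delta[n]$, $\Lambda^k[n]$ and their finite (co)limits, all of which are of finite type, so this restriction is harmless.
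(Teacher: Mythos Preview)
Your proposal is correct and is precisely the ``standard'' argument the paper alludes to (the paper gives no proof beyond the words ``The proof is standard''). The reduction via the co-reflection and the compatibility isomorphism \eqref{PreservingSimplicial} to the $SM7$ axiom in $\mathcal{M}$ is exactly the intended route, and your observation that $\underline{Hom}_{\mathcal{P}}(A,X)\cong\underline{Hom}_{\mathcal{M}}(A,X)$ for $A,X\in\mathcal{P}$ (indeed $\underline{Hom}_{\mathcal{M}}(A,X')\cong\underline{Hom}_{\mathcal{P}}(A,\mathcal{R}X')$ for $A\in\mathcal{P}$, $X'\in\mathcal{H}$) is the right key step.

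One small point deserves tightening: your sentence ``fibrations in $\mathcal{P}$ that are weak equivalences map, under the identification, to trivial fibrations in $\mathcal{M}$'' is not literally true, since a trivial fibration in $\mathcal{P}$ need not be a fibration in $\mathcal{M}$. What you need (and what the proof of Proposition~\ref{ModelStructure} actually gives) is that a trivial fibration $q$ in $\mathcal{P}$ is a retract of $\mathcal{R}(f)$ with $f$ a \emph{trivial} fibration in $\mathcal{M}$: factor $q$ in $\mathcal{H}$ as a trivial cofibration followed by a fibration $f$; since $q$ is a weak equivalence, so is $f$ by 2-out-of-3. Then $j*\mathcal{R}(f)\cong j*f$ is a trivial fibration in $SSet$ by $SM7$ in $\mathcal{M}$, and $j*q$ is a retract of it. With this adjustment your argument goes through cleanly.
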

We would like to define a right simplicial structure on $\mathbf G_{ft}$, s.t. it is compatible with the right pseudo-model structure from Proposition \ref{PseudoModel}. In fact there is a natural right simplicial structure on all of $\mathbf{C^\infty Sch}$, defined as follows: $\forall K\in SSet_{ft}$, $\forall(X,\mathcal O_{\bullet,X})\in\mathbf{C^\infty Sch}$ let
	\begin{equation}\label{Mapping}(X,\mathcal O_{\bullet, X})^K:=
	\underset{i\in\pi_0(K)}\prod(X,\mathcal O_{\bullet,X}\otimes K_i),\end{equation}
where $K_i\subseteq K$ is the connected component corresponding to $i\in\pi_0(K)$, $\mathcal O_{\bullet,X}\otimes{K_i}$ is the sheaf of simplicial $C^\infty$-rings, generated by 
	\begin{equation}\xymatrix{U\ar@{|->}[r] & \Gamma(U,O_{\bullet,X})\otimes{K_i},}\end{equation} 
and the product is taken in $\mathbf{C^\infty Sch}$. We claim that $(X,\mathcal O_{\bullet,X})^K\in\mathbf{C^\infty Sch}$. This is an immediate consequence of the following lemma.
%%%%%%
\begin{lemma} For any $A_\bullet\in SC^\infty\mathcal R$, any $K\in SSet_{ft}$, s.t. $\pi_0(K)=pt$, there is a natural isomorphism
	\begin{equation}\label{RightSimplicial}\xymatrix{\mathbf{Sp}(A_\bullet\otimes K)\ar[r] & \mathbf{Sp}(A_\bullet)^K.}\end{equation}\end{lemma}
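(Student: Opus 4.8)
The plan is to reduce the claimed isomorphism \eqref{RightSimplicial} to a statement about affine pieces and then glue. First I would recall what the two sides are concretely. The right-hand side $\mathbf{Sp}(A_\bullet)^K$ is, by the definition \eqref{Mapping} with $\pi_0(K)=pt$, just the simplicially $C^\infty$-ringed space $(Sp(A_\bullet),\mathcal O_{\bullet,Sp(A_\bullet)}\otimes K)$, where the tensor is applied section-wise and then sheafified. The left-hand side $\mathbf{Sp}(A_\bullet\otimes K)$ has underlying space $Hom_{C^\infty\mathcal R}(\pi_0(A_\bullet\otimes K),\mathbb R)$, so the first thing to check is that the two underlying topological spaces agree. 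Since $\pi_0$ commutes with the simplicial tensor and $(A_\bullet\otimes K)_0=\coprod_{k_0\in K_0}A_0$, and since $K$ is connected, I expect $\pi_0(A_\bullet\otimes K)\cong\pi_0(A_\bullet)$; intuitively, tensoring with a connected $K$ does not change the path components of the ring, which is exactly the statement that $\pi_0(A_\bullet\otimes\Delta[n])\cong\pi_0(A_\bullet)$ for the standard simplices and then extending by the fact that any connected $K$ is built from simplices by connected colimits. This gives a homeomorphism of the underlying spaces (both carry the Zariski topology pulled back from $Sp(A_0)$, and $\mathcal U$ depends only on $\pi_0$).

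Next I would compare the structure sheaves. Both are sheafifications of presheaves on the basis of Zariski opens $U$ (determined by $\mathcal U\subseteq A_0$), so it suffices to produce a natural isomorphism of the presheaves before sheafification, or at least on stalks. For the left-hand side the presheaf value at $U$ is $\{(A_\bullet\otimes K)_n\{(s^n\mathcal U)^{-1}\}\}_{n\geq 0}$, and for the right-hand side it is $\{A_n\{(s^n\mathcal U)^{-1}\}\}_{n\geq 0}\otimes K$. So the key algebraic point is a compatibility between localization of $C^\infty$-rings and the simplicial tensor: for $B_\bullet=A_n\{\cdots\}$ and the coproduct description of $\otimes K$, one needs
\begin{equation}\Bigl(\underset{k_n\in K_n}\coprod A_n\Bigr)\{(s^n\mathcal U)^{-1}\}\;\cong\;\underset{k_n\in K_n}\coprod\bigl(A_n\{(s^n\mathcal U)^{-1}\}\bigr),\end{equation}
i.e. that localizing a coproduct of $C^\infty$-rings at a set of elements coming from a common factor is the same as localizing each factor and then taking the coproduct. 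This is a standard fact about $C^\infty$-localization (it is a localization in the categorical sense, hence commutes with the relevant colimits), and once it is in hand the presheaves are visibly naturally isomorphic, hence so are their sheafifications.

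The remaining work is naturality and coherence: the isomorphism in \eqref{RightSimplicial} must be natural in $A_\bullet$ (so that it upgrades to a statement about cosimplicial $C^\infty$-schemes, not just affine ones) and compatible with the simplicial face/degeneracy maps in $K$, so that when $\pi_0(K)$ has several components the product formula \eqref{Mapping} matches the coproduct decomposition of $K$ inside $\otimes K$. I would handle this by checking that all the maps used — the comparison $\pi_0(A_\bullet\otimes K)\cong\pi_0(A_\bullet)$, the localization-commutes-with-coproduct isomorphism, and the sheafification — are functorial in the evident way, which is routine. I expect the main obstacle to be the $C^\infty$-geometric subtlety in the structure-sheaf comparison: in ordinary algebraic geometry $\Gamma\circ Sp$ is the identity on finitely generated rings, but here, as the authors already warn, $\Gamma(Sp(A),\mathcal O_{Sp(A)})\ncong A$ in general, so one must be careful that the presheaf-level isomorphism survives sheafification and that no information is lost in passing to $Sp(\pi_0(A_\bullet))$ inside $Sp(A_0)$; verifying the isomorphism on stalks (using softness and the description of stalks as localized $C^\infty$-rings) is the cleanest way around this, and that is where I would concentrate the detailed argument.
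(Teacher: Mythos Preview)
Your approach matches the paper's: identify the underlying spaces via $\pi_0(A_\bullet\otimes K)\cong\pi_0(A_\bullet)$ for connected $K$, then compare the structure presheaves through a localization-distributes-over-coproduct step and finish by checking the comparison is an isomorphism on stalks. The one refinement in the paper is that the presheaf for $\mathbf{Sp}(A_\bullet\otimes K)$ localizes at $\mathcal U'\subset(A_\bullet\otimes K)_0=\coprod_{K_0}A_0$, not at $\mathcal U\subset A_0$, so one does not get your displayed presheaf-level isomorphism on the nose but only a comparison morphism $\coprod_{K_n}\bigl(A_n\{(s^n\mathcal U)^{-1}\}\bigr)\to\bigl(\coprod_{K_n}A_n\bigr)\{(s^n\mathcal U')^{-1}\}$ from the universal property of localization, which is then seen to be an isomorphism on stalks --- precisely the fallback you already anticipate.
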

%%%%%%
\begin{proof} Consider $A_\bullet\otimes K\rightarrow A_\bullet$, given by the unique $K\rightarrow\Delta[0]$. It is easy to see that this map induces an isomorphism $\pi_0(A_\bullet\otimes K)\rightarrow\pi_0(A_\bullet)$. Therefore, we have a chosen homeomorphism
	\begin{equation}\xymatrix{Sp(A_\bullet)\ar[r]^{\cong\quad} & Sp(A_\bullet\otimes K).}\end{equation}
Clearly, it is natural in $A_\bullet$ and $K$, and we will use it to identify $Sp(A_\bullet)$ and $Sp(A_\bullet\otimes K)$. We will denote both of them by $Sp(A_\bullet)$.

Let $O$ be the following pre-sheaf of simplicial $C^\infty$-rings on $Sp(A_\bullet)$:
	\begin{equation}\xymatrix{U\ar@{|->}[r] & \{A_n\{(s^n\mathcal U)^{-1}\}\}_{n\geq 0},}\end{equation}
where $\mathcal U\subset A_0$ consists of elements, that do not vanish on $U$. Also let $O'$ be the pre-sheaf
	\begin{equation}\xymatrix{U\ar@{|->}[r] & \{(A_\bullet\otimes K)_n\{(s^n\mathcal U')^{-1}\}\}_{n\geq 0},}\end{equation}
where $\mathcal U'\subset (A_\bullet\otimes K)_0$ consists of the elements, that do not vanish on $U$. We claim that there is a natural morphism of pre-sheaves
	\begin{equation}\label{PreMorphism}\xymatrix{O\otimes K\ar[r] & O'.}\end{equation}
Indeed, let $k\in K_n$, let $\iota_k:A_n\rightarrow\underset{K_n}\coprod A_n$ be the corresponding inclusion, and let $\widetilde{\iota}_k:A_n\rightarrow(\underset{K_n}\coprod A_n)\{(s^n\mathcal U')^{-1}\}$ be the composition with localization. It is obvious that $\widetilde{\iota}_k$ inverts every element of $s^n(\mathcal U)$, and hence we have 
	\begin{equation}\xymatrix{\underset{K_n}\coprod(A_n\{(s^n\mathcal U)^{-1}\})\ar[r] &
	(\underset{K_n}\coprod A_n)\{(s^n\mathcal U)^{-1}\}},\end{equation}
giving us (\ref{PreMorphism}). Sheafification of (\ref{PreMorphism}) produces a morphism of sheaves
	\begin{equation}\xymatrix{\mathcal O_{\bullet,Sp(A_\bullet)}\otimes K\ar[r] & 
	\mathcal O_{\bullet,A_\bullet\otimes K}.}\end{equation}
It is straightforward to see that this morphism is an isomorphism on stalks.\end{proof}
%%%%%%

\smallskip

Since $\mathbf{Sp}$ is a right adjoint, it is clear that (\ref{RightSimplicial}) is an isomorphism for any $K\in SSet_{ft}$. It is also easy to see that $(X,\mathcal O_{\bullet,X})^K\in\mathbf G_{ft}$, for any $(X,\mathcal O_{\bullet,X})\in\mathbf G_{ft}$, and $K\in SSet_{ft}$. Thus we have proved the following proposition.
%%%%%%
\begin{proposition}\label{SimplicialStructure} The right pseudo-model category $(SC^\infty\mathcal R^{op},\mathcal G_{ft}^{op},\mathbf G_{ft},\mathbf{Sp})$ is right simplicial, with the simplicial structure given by (\ref{Mapping}).\end{proposition}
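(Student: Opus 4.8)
The plan is to verify the three axioms of a right simplicial pseudo-model category for the quadruple $(SC^\infty\mathcal R^{op},\mathcal G_{ft}^{op},\mathbf G_{ft},\mathbf{Sp})$ equipped with the structure (\ref{Mapping}), using the lemma just proved together with Propositions \ref{PseudoModel} and \ref{TheAdjunction}. The first axiom, that $SC^\infty\mathcal R$ is a simplicial model category, is Quillen's classical result cited above. The second axiom requires that $(SC^\infty\mathcal R^{op},\mathcal G_{ft}^{op},\mathbf G_{ft},\mathbf{Sp})$ is a right pseudo-model category extending the model structure (this is exactly Proposition \ref{PseudoModel}) and, in addition, that $\mathcal G_{ft}^{op}$ is closed under $(-)^K$ for $K\in SSet_{ft}$. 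The third axiom is that (\ref{Mapping}) defines a right simplicial structure on $\mathbf G_{ft}$ compatible with $\mathbf{Sp}$ via the coherent isomorphism (\ref{PreservingSimplicial}).

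First I would check that (\ref{Mapping}) lands in $\mathbf G_{ft}$ and defines a right simplicial structure. Well-definedness is the statement $(X,\mathcal O_{\bullet,X})^K\in\mathbf{C^\infty Sch}$, which the text already reduces to the lemma: locally each factor $(X,\mathcal O_{\bullet,X}\otimes K_i)$ is, by the lemma applied to the connected simplicial set $K_i$, isomorphic to $\mathbf{Sp}(A_\bullet\otimes K_i)$ for a local model $A_\bullet$, hence a cosimplicial $C^\infty$-scheme; a finite product of these in $\mathbf{C^\infty Sch}$ stays in $\mathbf{C^\infty Sch}$ (here finiteness of $\pi_0(K)$ is essential). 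That $\pi_0$ of the structure sheaf remains finitely generated follows because $\pi_0(A_n\otimes K_i)\cong\pi_0(A_n)$ on stalks, as shown inside the proof of the lemma, and a finite product of finitely generated $C^\infty$-rings is finitely generated; separability and second countability are preserved under finite products. The coherence isomorphisms $A^{pt}\cong A$, $(A^K)^L\cong A^{K\times L}$, and the adjunction $Hom(A,B^K)\cong Hom(K,\underline{Hom}(A,B))$ I would deduce from the corresponding coherences in $SC^\infty\mathcal R$ for $\otimes$: on an affine $\mathbf{Sp}(A_\bullet)$ they are transported across (\ref{RightSimplicial}), and since $\mathbf{Sp}\circ\Gamma\cong Id$ by Proposition \ref{TheAdjunction} every object of $\mathbf G_{ft}$ is affine, so the affine case suffices. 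The decomposition of $K$ into connected components and distributivity of $\otimes$ over coproducts of $SSet$ handles general $K,L$.

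Next I would establish the compatibility isomorphism (\ref{PreservingSimplicial}), i.e. $\mathbf{Sp}(A_\bullet^K)\cong(\mathbf{Sp}(A_\bullet))^K$ for $A_\bullet\in\mathcal G_{ft}^{op}$ and $K\in SSet_{ft}$; note that in $SC^\infty\mathcal R^{op}$ the cotensor $A_\bullet^K$ is computed as the tensor $A_\bullet\otimes K$ in $SC^\infty\mathcal R$. For $K$ connected this is precisely the content of the lemma, equation (\ref{RightSimplicial}), together with the remark that $\mathbf{Sp}$ being a right adjoint preserves the relevant limits; for general $K$ one writes $A_\bullet\otimes K$ as a colimit over $\pi_0(K)$ — a coproduct in $C^\infty\mathcal R$, hence a limit in $SC^\infty\mathcal R^{op}$ — and uses that $\mathbf{Sp}$, being a right adjoint, sends it to the product appearing in (\ref{Mapping}). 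The closure condition $A^K\in\mathcal G_{ft}^{op}$ for $A\in\mathcal G_{ft}^{op}$ reduces to showing $\pi_0(A_\bullet\otimes K)$ is finitely generated, and by the componentwise isomorphism $\pi_0(A_\bullet\otimes K_i)\cong\pi_0(A_\bullet)$ proved in the lemma this is a finite product of copies of $\pi_0(A_\bullet)$, hence finitely generated. Finally I would note that $\mathcal R$ (which here is $\Gamma$, or in the abstract setup the coreflector) commutes with $(-)^K$: on affines this is immediate since $\mathcal R(\mathbf{Sp}(B_\bullet)^K)\cong\mathcal R(\mathbf{Sp}(B_\bullet\otimes K))\cong\Gamma\mathbf{Sp}(B_\bullet\otimes K)$ and one compares with $(\mathcal R\,\mathbf{Sp}(B_\bullet))^K$ using that $\Gamma\mathbf{Sp}$ preserves weak equivalences (Proposition \ref{TheAdjunction}); since all objects of $\mathbf G_{ft}$ are affine this gives (\ref{PreservingSimplicial}) in general, completing the verification.

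The main obstacle I anticipate is not any single deep point but the bookkeeping around $\pi_0(K)$: one must consistently track the distinction between the tensor $\otimes$ (indexed by all simplices) and the cotensor $(-)^K$, and the reason the \emph{right} simplicial structure only works for $K\in SSet_{ft}$ — the product in (\ref{Mapping}) must be finite for the result to stay of finite type and for $\mathbf{Sp}$ to commute with it, since $\mathbf{Sp}$ need not commute with infinite products. The subtlety that in $C^\infty$-geometry $\Gamma\mathbf{Sp}(A_\bullet)$ may differ from $A_\bullet$ is exactly what forces us into the pseudo-model framework rather than an honest model category, but it does not actually obstruct the argument here: every object of $\mathbf G_{ft}$ is in the image of $\mathbf{Sp}$ up to the canonical isomorphism of Proposition \ref{TheAdjunction}, so all the required identities can be checked on the affine side and transported. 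As the text says, once the lemma is in hand the rest is routine, so the write-up is mostly a matter of assembling these observations.
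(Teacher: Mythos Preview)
Your proposal is correct and follows the same approach as the paper, which treats the proposition as an immediate consequence of the preceding lemma together with the fact that $\mathbf{Sp}$ is a right adjoint (hence extends (\ref{RightSimplicial}) to all $K\in SSet_{ft}$) and the easy observation that $(X,\mathcal O_{\bullet,X})^K$ stays in $\mathbf G_{ft}$; you have simply written out in detail what the paper compresses into two sentences. One minor slip: in your final paragraph you write ``$\mathcal R$ (which here is $\Gamma$)'', but in this setup $\mathcal R=\mathbf{Sp}$, as you yourself used correctly when establishing (\ref{PreservingSimplicial}) just above.
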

%%%%%%

%%%%%%%%%%%%%%%%%%%%%%%%%%%%%%%%%%%%%%%%%%%%%%%%%%%%%%%%%%%%%%%%%%%%%%%%%%%%%%%%
\subsection{Derived manifolds}\label{DerivedManifoldsSubsection}

Using the weak equivalences in $\mathbf G_{ft}$, we have the notion of homotopy limits,\footnote{ They are defined as limits in the simplicial localization of $\mathbf G_{ft}$.} and since $\mathbf G_{ft}$ is a right simplicial pseudo-model category, these homotopy limits are computable. Here we compute a particular kind of homotopy limit, that is essential for the notion of derived manifolds.
%%%%%%
\begin{definition}\begin{itemize}\item \underline{A Kuranishi neighbourhood} is a homotopy limit in $\mathbf G_{ft}$ of a diagram
	\begin{equation}\label{KuranishiSpace}\xymatrix{\mathbb R^n\ar@<+.5ex>[r]^\omega\ar@<-.5ex>[r]_\sigma & 
	\mathbb R^{m+n}}\end{equation}
where $\omega$ is the $0$-section, and $\sigma$ is any smooth section.
\item \underline{A derived manifold of finite type} is any $\mathbf X\in\mathbf G_{ft}$, s.t. locally it is weakly equivalent to Kuranishi neighbourhoods.
\item We will denote the full simplicial subcategory of $\mathbf G_{ft}$, consisting of derived manifolds, by $\mathbf{Man}_{ft}$.\end{itemize}\end{definition}
%%%%%%
We would like to compute the homotopy limit of (\ref{KuranishiSpace}) explicitly. The right simplicial pseudo-model structure on $\mathbf G_{ft}$ allows us to do this. In fact, the same approach works in a more general situation.

\smallskip

Let $\mathcal M$ be a manifold of finite type,\footnote{Recall that $\mathcal M$ is of finite type, if $C^\infty(\mathcal M)$ is a finitely generated $C^\infty$-ring.} and let $\xi:E\rightarrow\mathcal M$ be a vector bundle over $\mathcal M$. Let $\sigma:\mathcal M\rightarrow E$ be any smooth section, and let $\omega:\mathcal M\rightarrow E$ be the $0$-section. We would like to compute the homotopy equalizer in the following diagram:
	\begin{equation}\label{BigKuranishi}\xymatrix{\mathcal M\ar@<+.5ex>[r]^\omega\ar@<-.5ex>[r]_\sigma & E.}\end{equation}
Since $\omega,\sigma$ are sections of $\xi$, computing the usual equalizer of (\ref{BigKuranishi}) is equivalent to computing the pullback of 
	\begin{equation}\label{KuranishiBig}\xymatrix{& \mathcal M\ar[d]^\omega\\ \mathcal M\ar[r]_\sigma & E}\end{equation}	
in the category $\mathbf G_{ft}/\mathcal M$. Comparing fibrant replacements of (\ref{BigKuranishi}) and (\ref{KuranishiBig}), we see that also homotopy limits of (\ref{BigKuranishi}), (\ref{KuranishiBig}) agree.

We know (Proposition \ref{Induction}, and the discussion immediately after) that $\xi:E\rightarrow\mathcal M$ is a fibration, and hence $E$ is fibrant in $\mathbf G_{ft}/\mathcal M$. Therefore, to compute the homotopy pullback of (\ref{KuranishiBig}) it is enough to find a fibrant resolution of only one of $\omega$, $\sigma$ (e.g. \cite{Lu09}, proposition A.2.4.4). Now we are going to produce a resolution of $\sigma$.

Let $i_0,i_1:\Delta[0]\rightrightarrows\Delta[1]$ be the canonical inclusions. They are trivial cofibrations, and hence, since $E$ is fibrant, the morphism $i_1^*:E^{\Delta[1]}\rightarrow E$ is a trivial fibration. Consider the following pullback diagram in $\mathbf G_{ft}$:
	\begin{equation}\xymatrix{\mathcal M'\ar[d]_j\ar[rr]^{\tau} && E^{\Delta[1]}\ar[d]^{i_1^*}\\ 
	\mathcal M\ar[rr]_\sigma && E}\end{equation}
Obviously $j$ is a trivial fibration, and it has a section (\cite{GJ99}, proof of Lemma II.8.4). Indeed, consider the following diagram
	\begin{equation}\xymatrix{\mathcal M\ar[d]_=\ar[r]^\sigma & E\ar[r]^{\delta^*\quad} & E^{\Delta[1]}\ar[d]^{i_1^*}\\
	\mathcal M\ar[rr]_\sigma && E}\end{equation}
where $\delta:\Delta[1]\rightarrow\Delta[0]$. This diagram commutes because $i_1$ is a section of $\delta$. Therefore there is a unique $l:\mathcal M\rightarrow\mathcal M'$, s.t.
	\begin{equation}j\circ l=Id_\mathcal M,\quad\tau\circ l=\delta^*\circ\sigma.\end{equation}
Moreover, since $i_0$ is also a section of $\delta$, we have $i_0^*\circ\tau\circ l=\sigma$, i.e. the following diagram commutes
	\begin{equation}\xymatrix{\mathcal M\ar[rr]^\sigma\ar[d]_l && E\ar[d]^=\\ 
	\mathcal M'\ar[rr]_{i_0^*\circ\tau} && E}\end{equation} 
One can show (loc. cit.) that $i_0^*\circ\tau$ is a fibration, and therefore $i_0^*\circ\tau$ is a fibrant replacement of $\sigma$ ($l$ is a weak equivalence, since it is a section of a trivial fibration). So to compute the homotopy limit of (\ref{KuranishiBig}) it is enough to compute the usual limit in $\mathbf G_{ft}/\mathcal M$ of
	\begin{equation}\label{GoodKuranishi}\xymatrix{&& \mathcal M\ar[d]^\omega\\ 
	\mathcal M'\ar[rr]_{i_0^*\circ\tau} && E}\end{equation}
Now we would like to write (\ref{GoodKuranishi}) in terms of the $C^\infty$-rings of functions. Let $\omega^*,\sigma^*:C^\infty(E)\rightarrow C^\infty(\mathcal M)$ be the $C^\infty$-morphisms, corresponding to $\omega,\sigma$. 

According to Proposition \ref{SimplicialStructure} 	\begin{equation}E^{\Delta[1]}\cong\mathbf{Sp}(C^\infty(E)\otimes\Delta[1]).\end{equation}
For every $k\geq 0$ the set $\Delta[1]_k$ consists of weakly order preserving maps $\eta:\underline{k}\rightarrow\underline{1}$. There are $k+2$ such maps, and we denote them by $\{\eta_k^0,\ldots,\eta_k^{k+1}\}$, where $\eta_k^i$ sends $\{0,\ldots,i-1\}\mapsto 0$ and $\{i,\ldots,k\}\mapsto 1$. Therefore we have
	\begin{equation}(C^\infty(E)\otimes\Delta[1])_k=
	C^\infty(E)^0\underset\infty\otimes\ldots\underset\infty\otimes C^\infty(E)^{k+1},\end{equation}
where $C^\infty(E)^i$ is a copy of $C^\infty(E)$, corresponding to $\eta_k^i$, $0\leq i\leq k+1$. For every $k\geq 0$ the set $\Delta[0]_k$ consists of the unique map $\zeta_k:\underline{k}\rightarrow\underline{0}$, and by definition
	\begin{equation}i_0(\zeta_k):=\eta_k^0,\quad i_1(\zeta_k):=\eta_k^{k+1}.\end{equation}
Since $\mathbf{Sp}:\mathcal G_{ft}^{op}\rightarrow\mathbf G_{ft}$ has a left adjoint, it preserves limits, and we have
	\begin{equation}\mathcal M'\cong\mathbf{Sp}(\{C^\infty(E)^0\underset\infty\otimes\ldots
	\underset\infty\otimes C^\infty(E)^k\underset\infty\otimes C^\infty(\mathcal M)^{k+1}\}_{k\geq 0}).\end{equation}
Then the limit of (\ref{GoodKuranishi}) is
	\begin{equation}\mathbf{Sp}(\{C^\infty(\mathcal M)^0\underset\infty\otimes C^\infty(E)^1\underset\infty\otimes\ldots
	\underset\infty\otimes C^\infty(E)^k\underset\infty\otimes C^\infty(\mathcal M)^{k+1}\}_{k\geq 0}),\end{equation}
where the simplicial structure maps are given by the structure maps on $\Delta[1]$, identities on $C^\infty(E)$, and $\omega^*,\sigma^*$; with $\omega^*$ being used when $C^\infty(\mathcal M)$ is indexed by $\eta_k^0$, and $\sigma^*$ being used when $C^\infty(\mathcal M)$ is indexed by $\eta_k^{k+1}$. 

We will denote $\{C^\infty(\mathcal M)^0\underset\infty\otimes C^\infty(E)^1\underset\infty\otimes\ldots\underset\infty\otimes C^\infty(E)^k\underset\infty\otimes C^\infty(\mathcal M)^{k+1}\}_{k\geq 0}$ by $\mathcal B(\omega^*,\sigma^*)_\bullet$, since it is clear that this is just the bar construction of $C^\infty(E)$ with coefficients in $C^\infty(\mathcal M)$.

\smallskip

The simplicial $C^\infty$-ring $\mathcal B(\omega^*,\sigma^*)_\bullet$ is not very complicated, but it can be made even simpler, making homotopical calculations so much easier. Consider first the case when $E=\mathcal M\times\mathbb R^m$ is a trivial bundle. Then we have $C^\infty(E)=C^\infty(\mathcal M)\underset\infty\otimes C^\infty(\mathbb R^m)$, and $\omega^*,\sigma^*$ can be equivalently described using
	\begin{equation}\xymatrix{C^\infty(\mathbb R^m)\ar@<+.5ex>[r]^{\mu^*}\ar@<-.5ex>[r]_{\nu_*} & 
	C^\infty(\mathcal M),}\end{equation}
where $\mu^*$ factors through $\mathbb R$, and $\nu^*$ is some $C^\infty$-morphism. Let $\mathcal B(\mu^*,\nu^*)_\bullet$ be the bar construction of $C^\infty(\mathbb R^m)$ with coefficients in $\mathbb R$, $C^\infty(\mathcal M)$, i.e.
	\begin{equation}\mathcal B(\mu^*,\nu^*)_k=C^\infty(\mathbb R^m)^1\underset\infty\otimes\ldots
	\underset\infty\otimes C^\infty(\mathbb R^m)^k\underset\infty\otimes C^\infty(\mathcal M)^{k+1}.\end{equation}
Now it is easy to see that $\mathcal B(\omega^*,\sigma^*)_\bullet$ is a colimit of the following diagram
	\begin{equation}\xymatrix{C^\infty(\mathcal M)\ar[rr]^{(i_1)_*}\ar[d] && C^\infty(\mathcal M)\otimes\Delta[1]\\
	\mathcal B(\mu^*,\nu^*)_\bullet &&}\end{equation}
Applying $\mathbf{Sp}$ we obtain $\mathbf{Sp}(\mathcal B(\omega^*,\sigma^*)_\bullet)$ as a limit of the following diagram
	\begin{equation}\xymatrix{&& \mathcal M^{\Delta[1]}\ar[d]^{i_1^*}\\ \mathbf{Sp}(\mathcal B(\mu^*,\nu^*)_\bullet\ar[rr] &&
	\mathcal M}\end{equation}
Therefore the morphism $\mathbf{Sp}(\mathcal B(\omega^*,\sigma^*)_\bullet)\rightarrow\mathbf{Sp}(\mathcal B(\mu^*,\nu^*)_\bullet)$ is a weak equivalence. Now we notice that 
	\begin{equation}\mathcal B(\mu^*,\nu^*)_\bullet\cong\{C^\infty(E^{\times^k_\mathcal M})\}_{k\geq 0},\end{equation}
and since every bundle is locally trivial, be obtain the following result. %Here we use the fact that $$\mathbf{Sp}(\{C^\infty(E^{\times^k_\mathcal M})\}_{k\geq 0})|_U\cong\mathbf{Sp}(\{C^\infty(E|_U^{\times^k_\mathcal M})\}_{k\geq 0}),$$ for any open trivialising subset $U\subseteq\mathcal M$. Indeed, there is an obvious morphism $\{C^\infty(E^{\times^k_\mathcal M})\}_{k\geq 0}\rightarrow\{C^\infty(E|_U^{\times^k_\mathcal M})\}_{k\geq 0}$, given by restriction of functions. Therefore, we have the corresponding morphism of cosimplicial $C^\infty$-schemes $$\mathbf{Sp}(\{C^\infty(E^{\times^k_\mathcal M}\}_{k\geq 0})\leftarrow\mathbf{Sp}(\{C^\infty(E|_U^{\times^k_\mathcal M}\}_{k\geq 0})|_U.$$ To prove that this is an isomorphism, it is enough to look at the morphisms of stalks. By construction, the stalks on the right hand side consist of $C^\infty(E|_U)^{\times_\mathcal M^k}$ modulo the ideal of functions, that vanish in a {\it product neighbourhood} of a fiber over a given point $p\in U$. Similarly the stalks on the left hand side consist of quotients of $C^\infty(E)^{\times_\mathcal M^k}$. From softness of the structure sheaf on $E$ we conclude that the two quotients are isomorphic.
%%%%%%
\begin{proposition}\label{ZeroLocus} Let $E\rightarrow\mathcal M$ be a vector bundle over a manifold $\mathcal M$ of finite type, and let $\sigma:\mathcal M\rightarrow E$ be a section. Let $\omega:\mathcal M\rightarrow E$ be the $0$-section. The homotopy equalizer of
	\begin{equation}\xymatrix{\mathcal M\ar@<+.5ex>[r]^\omega\ar@<-.5ex>[r]_\sigma & E}\end{equation}
can be written as
	\begin{equation}\mathbf{Sp}(\{C^\infty(E^{\times^k_\mathcal M})\}_{k\geq 0}).\end{equation} 
\end{proposition}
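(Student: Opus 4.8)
The plan is to deduce the general statement from the trivial-bundle computation carried out just above, using that $\xi\colon E\to\mathcal M$ is locally trivial and that weak equivalences in $\mathbf G_{ft}$ can be tested locally on $\mathcal M$. First I would note that the reduction preceding the statement --- replacing the homotopy equalizer of $\omega,\sigma\colon\mathcal M\to E$ by the ordinary limit in $\mathbf G_{ft}/\mathcal M$ of the diagram~(\ref{GoodKuranishi}), and identifying this limit with $\mathbf{Sp}(\mathcal B(\omega^*,\sigma^*)_\bullet)$ --- used nothing about $E$ beyond the facts that $\xi$ is a fibration and that $\mathcal M$ and $E$ are fibrant; by Proposition~\ref{Induction} and the remarks after it, these hold for every vector bundle. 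Hence it is enough to produce a weak equivalence $\mathbf{Sp}(\mathcal B(\omega^*,\sigma^*)_\bullet)\to\mathbf{Sp}(\{C^\infty(E^{\times^k_\mathcal M})\}_{k\geq 0})$, natural in the triple $(\mathcal M,E,\sigma)$.

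Next I would recall the trivial case and extract a canonical comparison map. When $E=\mathcal M\times\mathbb R^m$, the discussion above presents $\mathcal B(\omega^*,\sigma^*)_\bullet$ as the pushout of $\mathcal B(\mu^*,\nu^*)_\bullet\leftarrow C^\infty(\mathcal M)\xrightarrow{(i_1)_*}C^\infty(\mathcal M)\otimes\Delta[1]$, so that $\mathbf{Sp}(\mathcal B(\omega^*,\sigma^*)_\bullet)$ is the pullback of $\mathbf{Sp}(\mathcal B(\mu^*,\nu^*)_\bullet)\to\mathcal M\xleftarrow{i_1^*}\mathcal M^{\Delta[1]}$; since $i_1^*$ is a trivial fibration, its base change $\mathbf{Sp}(\mathcal B(\omega^*,\sigma^*)_\bullet)\to\mathbf{Sp}(\mathcal B(\mu^*,\nu^*)_\bullet)\cong\mathbf{Sp}(\{C^\infty(E^{\times^k_\mathcal M})\}_{k\geq 0})$ is again a trivial fibration, in particular a weak equivalence. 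The simplicial map $\{C^\infty(E^{\times^k_\mathcal M})\}_{k\geq 0}\to\mathcal B(\omega^*,\sigma^*)_\bullet$ underlying this base change is assembled solely from the fibre products $E^{\times^k_\mathcal M}$, the projections between them, the zero section $\omega$ and the section $\sigma$; none of this uses a trivialization, and it is compatible with restriction to open subsets of $\mathcal M$ and with vector bundle isomorphisms over $\mathcal M$. So the same recipe defines, for an arbitrary vector bundle $E$, a natural morphism $u\colon\mathbf{Sp}(\mathcal B(\omega^*,\sigma^*)_\bullet)\to\mathbf{Sp}(\{C^\infty(E^{\times^k_\mathcal M})\}_{k\geq 0})$ in $\mathbf G_{ft}$, whose restriction over any open $U\subseteq\mathcal M$ trivializing $E$ is the trivial-case morphism just described.

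Then I would finish by locality. Both sides of $u$ have underlying space the zero locus $Z\subseteq\mathcal M$ of $\sigma$, carrying sheaves of simplicial $C^\infty$-rings, and $u$ is a morphism of such sheaves; a morphism in $\mathbf G_{ft}$ is a weak equivalence precisely when it induces isomorphisms on all homotopy sheaves, which can be checked on stalks and hence over any open cover. Cover $\mathcal M$ by opens $U$ over which $E$ is trivial. The formation of the homotopy equalizer and of the simplicial $C^\infty$-ring $\{C^\infty(E^{\times^k_\mathcal M})\}_{k\geq 0}$ commutes with restriction to such $U$ --- because $E^{\times^k_\mathcal M}|_U=(E|_U)^{\times^k_U}$, and the Zariski localizations defining $\mathbf{Sp}$ commute with the $C^\infty$-tensor products entering the bar construction --- so $u|_U$ is the trivial-case morphism, which was shown to be a weak equivalence. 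Therefore $u$ is a weak equivalence, and the homotopy equalizer is $\mathbf{Sp}(\{C^\infty(E^{\times^k_\mathcal M})\}_{k\geq 0})$.

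The step I expect to be the main obstacle is the sheaf-theoretic bookkeeping in the last paragraph: checking that the homotopy equalizer really is a local construction over $\mathcal M$ (that is, that the fibrant replacement and limit producing $\mathbf{Sp}(\mathcal B(\omega^*,\sigma^*)_\bullet)$ commute with the Zariski localizations used to build the structure sheaf), and that the trivial-case identifications, together with the coherence isomorphisms of Proposition~\ref{SimplicialStructure}, genuinely patch into one global morphism $u$ rather than just an open cover's worth of local ones. One must also track the finite-type conditions, so that $\{C^\infty(E^{\times^k_\mathcal M})\}_{k\geq 0}$ --- whose $\pi_0$ is the finitely generated $C^\infty$-ring $C^\infty(\mathcal M)/(\sigma)$ --- and all intermediate objects stay inside $\mathcal G_{ft}$ and $\mathbf G_{ft}$.
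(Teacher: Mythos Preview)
Your proposal is correct and follows the same route as the paper: reduce to the bar construction $\mathbf{Sp}(\mathcal B(\omega^*,\sigma^*)_\bullet)$ using the path-object fibrant replacement, identify this with $\mathbf{Sp}(\{C^\infty(E^{\times_\mathcal M^k})\}_{k\geq 0})$ explicitly when $E$ is trivial, and then invoke local triviality. The only difference is one of detail: where the paper dismisses the last step with a single clause (``since every bundle is locally trivial, we obtain the following result''), you spell out the construction of a global comparison morphism $u$ and the stalkwise verification that it is a weak equivalence --- precisely the bookkeeping you flag as the main obstacle, and which the paper simply takes for granted.
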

%%%%%%
By construction, for each $k\geq 2$, the $C^\infty$-ring $C^\infty(E^{\times^k_\mathcal M})$ is obtained as a coproduct of degenerations of $C^\infty(E)$. In other words $\{C^\infty(E^{\times^k_\mathcal M})\}_{k\geq 0}$ is a $1$-skeletal simplicial $C^\infty$-ring. Using this fact it is easy to show that $\mathbf{Sp}(\{C^\infty(E^{\times^k_\mathcal M})\}_{k\geq 0})$ is fibrant. Indeed
	\begin{equation}\xymatrix{C^\infty(\mathcal M)\ar[r] & C^\infty(E)\ar@<+.7ex>[l]\ar@<-.7ex>[l]}\end{equation}
is a retract of some
	\begin{equation}\xymatrix{C^\infty(\mathcal M)\ar[r] & C^\infty(F)\ar@<+.7ex>[l]\ar@<-.7ex>[l],}\end{equation}
where $F\rightarrow\mathcal M$ is a trivial bundle. Since $C^\infty(\mathcal M)\rightarrow C^\infty(F)$ is a free morphism, we see that $\mathbf{Sp}(\{C^\infty(F^{\times^k_\mathcal M})\}_{k\geq 0})$ is fibrant, and hence so is $\mathbf{Sp}(\{C^\infty(E^{\times^k_\mathcal M})\}_{k\geq 0})$.

Going back to Kuranishi neighbourhoods, we would like to fix a particular model for such objects, i.e. a particular choice of the homotopy pullback. The preceding discussion suggests the following definition.
%%%%%%
\begin{definition}\label{StandardKuranishi} A \underline{standard Kuranishi neighbourhood} is $\mathbf{Sp}(\{C^\infty(\mathbb R^{n+km})\}_{k\geq 0})$, with the simplicial structure defined as above.\end{definition}
%%%%%%

%%%%%%%%%%%%%%%%%%%%%%%%%%%%%%%%%%%%%%%%%%%%%%%%%%%%%%%%%%%%%%%%%%%%%%%%%%%%%%%%
\section{D-manifolds}

%%%%%%%%%%%%%%%%%%%%%%%%%%%%%%%%%%%%%%%%%%%%%%%%%%%%%%%%%%%%%%%%%%%%%%%%%%%%%%%%
\subsection{The $2$-category of $d$-spaces}\label{The2CategorySection}

We define a \underline{$d$-space} to be a quadruple $(X,\mathcal O'_X,\mathcal E_X,d)$, where $X$ is a second countable, Hausdorff space; $\mathcal O'_X$ is a sheaf of $C^\infty$-rings on $X$; $\mathcal E_X$ is a sheaf of $\mathcal O'_X$-modules; and $d:\mathcal E_X\rightarrow\mathcal O'_X$ is a morphism of $\mathcal O'_X$-modules, s.t.\begin{itemize}
\item[1.] $\mathcal O_X:=\mathcal O'_X/d\mathcal E_X$ is a soft sheaf, and its stalks are local $C^\infty$-rings,
\item[2.] $\mathcal O_X$ is locally finitely generated (as a sheaf of $C^\infty$-rings),
\item[3.] $d\mathcal E_X\subset\mathcal O'_X$ is a sheaf of square-zero ideals, and 
	\begin{equation}\label{Gce}(d\mathcal E_X)\mathcal E_X=0.\end{equation}\end{itemize}
%Note that there are two definitions of $(d\mathcal E_X)^2=0$ and $(d\mathcal E_X)\mathcal E_X=0$: one is to require the presheaves to be $0$, the other is to require only sheafifications to be $0$. Clearly the former implies the latter. However, also the latter implies the former. Indeed, vanishing of the sheafification is the same as vanishing of all the germs, but since the presheaves in question are sub-presheaves of sheaves, their germs vanish if and only if they are $0$ themselves.\\
It is easy to see that this definition of a $d$-space is equivalent to Definition 4.1.4 in \cite{Jo12}, in the sense that they define the same objects. 
We will denote a $d$-space $(X,\mathcal O'_X,\mathcal E_X,d)$ by $\underline{X}$.

\smallskip

\underline{A $1$-morphism of $d$-spaces} $\underline{X}\rightarrow\underline{Y}$ is given by a triple $\underline{\phi}:=(\phi,\phi',\phi'')$, where $\phi:X\rightarrow Y$ is a continuous map, $\phi':\mathcal O'_Y\rightarrow\phi_*(\mathcal O'_X)$ is a morphism of sheaves of $C^\infty$-rings, and $\phi'':\mathcal E_Y\rightarrow\phi_*(\mathcal E_X)$ is a morphism of $\mathcal O'_Y$-modules, s.t. the following diagram is commutative
	\begin{equation}\label{OneMorphism}\xymatrix{\mathcal E_Y\ar[d]_d\ar[rr]^{\phi''} && \phi_*(\mathcal E_X)\ar[d]^{\phi_*(d)}\\ 
	\mathcal O'_Y\ar[rr]_{\phi'}  && \phi_*(\mathcal O'_X)}\end{equation}
Given two $1$-morphisms $\underline{\phi}_1:\underline{X}\rightarrow\underline{Y}$, $\underline{\phi}_2:\underline{Y}\rightarrow\underline{Z}$, their composition is
	\begin{equation}\underline{\phi}_2\circ\underline{\phi}_1:=
	(\phi_2\circ\phi_1,(\phi_2)_*(\phi_1')\circ\phi_2',(\phi_2)_*(\phi_1'')\circ\phi_2'').\end{equation}
Clearly, commutativity of (\ref{OneMorphism}) implies that $\phi'$ induces a morphism
	\begin{equation}\xymatrix{\phi^\sharp:\mathcal O_Y=\mathcal O'_Y/d\mathcal E_Y\ar[r] &
	\phi_*(\mathcal O'_X)/\phi_*(d)(\phi_*(\mathcal E_X))\cong\phi_*(\mathcal O_X).}\end{equation}	
%Note that $\phi_*(\mathcal O'_X)/\phi_*(d)(\phi_*(\mathcal E_X))\cong\phi_*(\mathcal O_X)$ is true because $\mathcal O'_X$ is a soft sheaf, and hence the pre-sheaf $\mathcal O'_X/d\mathcal E_X$ is already a sheaf, which in turn implies that the pre-sheaf $\phi_*(\mathcal O'_X)/\phi_*(d)(\phi_*(\mathcal E_X))$ is also a sheaf.\\
We will say that $\underline{\phi},\underline{\psi}:\underline{X}\rightarrow\underline{Y}$ are \underline{scheme-theoretically equal} if $\phi=\psi$ and $\phi^\sharp=\psi^\sharp$. In this case the image of $\psi'-\phi'$ lies in $\phi_*(d\mathcal E_X)$, which is a square-zero ideal, hence we have a $C^\infty$-derivation
	\begin{equation}\xymatrix{\psi'-\phi':\mathcal O'_Y\ar[r] & \phi_*(d\mathcal E_X),}\end{equation} 
and we define a \underline{$2$-morphism} from $\underline{\phi}$ to $\underline{\psi}$ to be a $C^\infty$-derivation\footnote{Note that $\phi^\sharp=\psi^\sharp$ implies that $\phi'$, $\psi'$ define the same $\mathcal O'_Y$-module structure on $\phi_*(\mathcal E_X)$.}
	\begin{equation}\eta:\mathcal O'_Y\rightarrow\phi_*(\mathcal E_X),\end{equation} 
lifting $\psi'-\phi'$ and $\psi''-\phi''$, i.e. making the following diagram commutative:
	\begin{equation}\label{TwoMorphism}\xymatrix{\mathcal E_Y\ar[d]_d\ar[rr]^{\psi''-\phi''} && \phi_*(\mathcal E_X)\ar[d]^{\phi_*(d)}\\
	\mathcal O'_Y\ar[rr]_{\psi'-\phi'}\ar[rru]^\eta && \phi_*(d\mathcal E_X)}\end{equation}
%Note that (\ref{Gce}) implies that $\eta\circ d:\mathcal E_Y\rightarrow\phi_*(\mathcal E_X)$ is a morphism of $\mathcal O'_Y$-modules. Indeed, $d\mathcal E_Y$ acts trivially on $\phi_*(\mathcal E_X)$, and hence for any sections $f,e$ of $\mathcal O'_Y$, $\mathcal E_Y$ respectively, $\eta(d(fe))=\eta(fd(e))=f\eta(d(e))+\eta(f)d(e)=f\eta(d(e))$.
Consider a diagram of $1$- and $2$-morphisms:
	\begin{equation}\xymatrix{\underline{X}\ar@{=}[d]\ar[rr]^{\underline{\phi}_1\qquad} & \ar[d]^{\eta_1} & 
	\underline{Y}\ar@{=}[d]\ar[rr]^{\underline{\phi}_2\qquad} & \ar[d]^{\eta_2} & \underline{Z}\ar@{=}[d]\\
	\underline{X}\ar@{=}[d]\ar[rr]^{\underline{\chi}_1\qquad} & \ar[d]^{\theta_1} & \underline{Y}\ar@{=}[d]\ar[rr]^{\underline{\chi}_2\qquad} & 
	\ar[d]^{\theta_2} & \underline{Z}\ar@{=}[d]\\
	\underline{X}\ar[rr]^{\underline{\psi}_1\qquad} && \underline{Y}\ar[rr]^{\underline{\psi}_2\qquad} && \underline{Z}}\end{equation}
It is immediate from (\ref{TwoMorphism}), that $\eta_1+\theta_1$ is a $2$-morphism from $\underline{\phi}_1$ to $\underline{\psi}_1$. Thus we define the \underline{vertical composition} as follows:
	\begin{equation}\theta_1\circ\eta_1:=\eta_1+\theta_1,\quad\theta_2\circ\eta_2:=\eta_2+\theta_2.\end{equation}
It is easy to see that $(\phi_2)_*(\eta_1)\circ\phi'_2$ and $(\phi_2)_*(\chi''_1)\circ\eta_2$ are $2$-morphisms from $\underline{\phi}_2\circ\underline{\phi}_1$ to $\underline{\phi}_2\circ\underline{\chi}_1$ and from $\underline{\phi}_2\circ\underline{\chi}_1$ to $\underline{\chi}_2\circ\underline{\chi}_1$ respectively. 
%Indeed, the following diagrams are obviously commutative \begin{equation}\xymatrix{\mathcal E_Z\ar[d]_d\ar[rr]^{\phi''_2} && (\phi_2)_*(\mathcal E_Y)\ar[d]_{(\phi_2)_*(d)}\ar[rrr]^{(\phi_2)_*(\chi''_1-\phi''_1)} &&& (\phi_2\circ\phi_1)_*(\mathcal E_X)\ar[d]^{(\phi_2\circ\phi_1)_*(d)}\\ \mathcal O'_Z\ar[rr]_{\phi'_2} && (\phi_2)_*(\mathcal O'_Y)\ar[rrru]^{(\phi_2)_*(\eta_1)}\ar[rrr]_{(\phi_2)_*(\chi'_1-\phi'_1)} &&& (\phi_2\circ\phi_1)_*(d\mathcal E_X)}\end{equation} \begin{equation}\xymatrix{\mathcal E_Z\ar[d]_d\ar[rr]^{\chi''_2-\phi''_2} && (\phi_2)_*(\mathcal E_Y)\ar[d]^{(\phi_2)_*(d)}\ar[rrr]^{(\phi_2)_*(\chi''_1)} &&& (\phi_2\circ\phi_1)_*(\mathcal E_X)\ar[d]^{(\phi_2\circ\phi_1)_*(d)}\\ \mathcal O'_Z\ar[rr]_{\chi'_2-\phi'_2}\ar[rru]^{\eta_2} && (\phi_2)_*(d\mathcal E_Y)\ar[rrr] _{(\phi_2)_*(\chi'_1)} &&& (\phi_2\circ\phi_1)_*(d\mathcal E_X)}\end{equation}
Therefore, we define the \underline{horizontal composition} as follows:
	\begin{equation}\eta_2\square\eta_1:=(\phi_2)_*(\eta_1)\circ\phi'_2+(\phi_2)_*(\chi''_1)\circ\eta_2,\end{equation}
	$$\theta_2\square\theta_1:=(\chi_2)_*(\theta_1)\circ\chi'_2+(\chi_2)_*(\psi''_1)\circ\theta_2.$$
It is straightforward to check, that $\square$ is associative and unital, and the interchange condition $(\theta_2\square\theta_1)\circ(\eta_2\square\eta_1)=(\theta_2\circ\eta_2)\square(\theta_1\circ\eta_1)$ is satisfied. 
%Indeed, given a diagram \begin{equation}\xymatrix{\underline{X}\ar@{=}[d]\ar[rr]^{\underline{\phi}_1} & \ar[d]^{\eta_1} & \underline{Y}\ar@{=}[d]\ar[rr]^{\underline{\phi}_2} & \ar[d]^{\eta_2} & \underline{Z}\ar@{=}[d]\ar[rr]^{\underline{\phi}_3} & \ar[d]^{\eta_3} & \underline{W}\ar@{=}[d]\\ \underline{X}\ar[rr]_{\underline{\chi}_1} && \underline{Y}\ar[rr]_{\underline{\chi}_2} && \underline{Z}\ar[rr]_{\underline{\chi}_3} && \underline{W}}\end{equation} It is easy to see that both $(\eta_3\circ\eta_2)\circ\eta_1$ and $\eta_3\circ(\eta_2\circ\eta_1)$ are equal to \begin{equation}(\phi_3\circ\phi_2)_*(\eta_1)\circ(\phi_3)_*(\phi'_2)\circ\phi'_3+(\phi_3\circ\phi_2)_*(\chi''_1)\circ(\phi_3)_*(\eta_2)\circ\phi'_3+\end{equation} $$+(\phi_3\circ\phi_2)_*(\chi''_1)\circ(\phi_3)_*(\chi''_2)\circ\eta_3.$$ Interchange formula follows from \begin{equation}(\chi_2)_*(\theta_1)\circ(\chi'_2-\phi'_2)+(\phi_2)_*(\chi''_1-\psi''_1)\circ\eta_2=0,\end{equation} which, in turn, is a consequence of commutativity of the following diagram \begin{equation}\xymatrix{&& (\phi_2)_*(\mathcal E_Y)\ar[d]_{(\phi_2)_*(d)}\ar[rr]^{(\chi_2)_*(\psi''_1-\chi''_1)} && (\phi_2\circ\phi_1)_*(\mathcal E_X)\\ \mathcal O'_Z\ar[rru]^{\eta_2}\ar[rr]_{\chi'_2-\phi'_2} && (\phi_2)_*(\mathcal O'_Y)\ar[rru]_{(\chi_2)_*(\theta_1)}}\end{equation}
Thus $d$-spaces, $1$- and $2$-morphisms form a strict $2$-category, that we will denote by $\underline{G}$. 

It is easy to check that $\underline{G}$ is equivalent to the $2$-category of $d$-spaces, defined in \cite{Jo12}. The difference is only in the presentation: in \cite{Jo12} derivations are written as maps out of sheaves of differential forms, and instead of pushforwards, one uses pullbacks of sheaves. 

We will denote by $\underline{G}_{ft}\subset\underline{G}$ the full $2$-subcategory, consisting of $d$-spaces of finite type, i.e. those $(X,\mathcal O'_X,\mathcal E_X,d)$ s.t. $\mathcal O_X:=\mathcal O'_X/d\mathcal E_X$ is a sheaf of finitely generated $C^\infty$-rings.

%%%%%%%%%%%%%%%%%%%%%%%%%%%%%%%%%%%%%%%%%%%%%%%%%%%%%%%%%%%%%%%%%%%%%%%%%%%%%%%%
\subsection{Truncation of cosimplicial $C^\infty$-schemes}\label{TruncationSection}

Let $\mathbf X:=(X,\mathcal O_{\bullet,X})\in\mathbf G_{ft}$. We would like to define a $d$-space, that is the truncation of $\mathbf X$. First we need to recall some standard notation from the theory of simplicial modules: let $d^k_i:\mathcal O_{k,X}\rightarrow\mathcal O_{k-1,X}$ be the boundaries, one defines
	\begin{equation}\forall k\geq 1,\qquad\mathcal N_k:=\underset{0\leq i\leq k-1}\bigcap Ker(d^k_i)\subset \mathcal O_{k,X}.\end{equation}
Clearly, this is a sequence of sheaves of ideals. One has $d^{k-1}_{k-1}\circ d^k_k=0$ on $\mathcal N_*$, and hence $(\mathcal N_k,(-1)^kd^k_k)$ is a complex, called \underline{the normalized complex}.

Now we define two sheaves on $X$:
	\begin{equation}\mathcal O'_X:=\mathcal O_{0,X}/(d^1_1(\mathcal N_1))^2,\qquad \mathcal E:=\mathcal N_1/(d^2_2(\mathcal N_2)+\mathcal N_1^2).\end{equation}
Since $d^1_1\circ d^2_2=0$ on $\mathcal N_*$, it is clear that $d^1_1$ induces a morphism of $\mathcal O'_X$-modules 
	\begin{equation}d:\mathcal E\rightarrow\mathcal O'_X.\end{equation}
%%%%%%
\begin{proposition} Defined as above $(X,\mathcal O'_X,\mathcal E,d)$ is a $d$-space, and the assignment $\mathbf X\mapsto (X,\mathcal O'_X,\mathcal E,d)$ extends to a functor $\mathbf T:\mathbf G_{ft}\rightarrow\underline{G}_{ft}$.\end{proposition}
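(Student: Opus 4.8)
The plan is to verify the three axioms of a $d$-space for the quadruple $(X,\mathcal O'_X,\mathcal E,d)$ constructed above, and then to check functoriality. First I would address the local structure: since $\mathbf X\in\mathbf G_{ft}$, locally $\mathbf X$ is isomorphic to the spectrum of a simplicial $C^\infty$-ring $A_\bullet$ of finite type, so it suffices to describe $\mathcal O'_X$, $\mathcal E$ and $d$ in terms of $A_\bullet$. By Proposition \ref{TheAdjunction} we may even replace $A_\bullet$ by $\Gamma(\mathbf X,\mathcal O_{\bullet,X})$ and argue with global sections on an affine piece. The key computation is that $\mathcal O_{0,X}/(d^1_1\mathcal N_1)^2$ has stalks that are local $C^\infty$-rings: this follows because $\pi_0(\mathcal O_{\bullet,X})_p = (\mathcal O_{0,X}/d^1_1\mathcal N_1)_p$ is local (remark after the definition of cosimplicial $C^\infty$-scheme), and $(d^1_1\mathcal N_1)^2 \subseteq d^1_1\mathcal N_1$, so $\mathcal O'_X$ is an infinitesimal (square-zero) thickening of $\pi_0$, hence still local with the same residue field $\mathbb R$. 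Softness of $\mathcal O_X = \mathcal O'_X/d\mathcal E = \pi_0(\mathcal O_{\bullet,X})$ and its being locally finitely generated are exactly the content of Proposition \ref{TheAdjunction}/the finite-type hypothesis (Proposition 1 in the excerpt: $\mathcal O_{0,X}$ soft implies $\pi_0$ soft as a quotient).

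Next I would verify axiom 3, the square-zero and annihilation conditions. By construction $d\mathcal E = d^1_1(\mathcal N_1)/(d^1_1\mathcal N_1)^2 \subseteq \mathcal O'_X$, and squaring this ideal lands in $(d^1_1\mathcal N_1)^2$, which is zero in $\mathcal O'_X$ by definition; so $d\mathcal E$ is a square-zero ideal. For $(d\mathcal E)\mathcal E = 0$, I would unwind the $\mathcal O'_X$-module structure on $\mathcal E = \mathcal N_1/(d^2_2\mathcal N_2 + \mathcal N_1^2)$: an element of $d\mathcal E$ is represented by $d^1_1(n)$ with $n\in\mathcal N_1$, and its action on the class of $m\in\mathcal N_1$ is represented by $d^1_1(n)\cdot m$. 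The point is that $d^1_1(n)\cdot m$, computed in $\mathcal O_{1,X}$ via the degeneracy $s^1$ pulling $d^1_1(n)\in\mathcal O_{0,X}$ up, lies in $\mathcal N_1\cdot\mathcal N_1 = \mathcal N_1^2$ (using that $\mathcal N_1$ is an ideal and $d^1_0(d^1_1 n$ lifted$) $ vanishes), hence is killed in $\mathcal E$. This is the one genuinely fiddly simplicial-algebra point, and I expect it to be the main obstacle: one must keep careful track of faces versus degeneracies and of the two different quotients defining $\mathcal O'_X$ and $\mathcal E$.

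For functoriality, given $\mathbf f:\mathbf X\to\mathbf Y$ in $\mathbf G_{ft}$, i.e. a continuous $f:X\to Y$ together with $f^\flat:\mathcal O_{\bullet,Y}\to f_*\mathcal O_{\bullet,X}$, I would observe that a morphism of simplicial $C^\infty$-rings commutes with all faces and degeneracies, hence carries $\mathcal N_k^{\mathbf Y}$ into $f_*\mathcal N_k^{\mathbf X}$ and is compatible with $d^k_k$; therefore it descends to $\phi' := $ the induced map $\mathcal O'_Y \to f_*\mathcal O'_X$ and $\phi'' := $ the induced map $\mathcal E_Y \to f_*\mathcal E_X$, and the square (\ref{OneMorphism}) commutes because it does so before taking quotients. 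One then checks $\mathbf T(\mathbf g\circ\mathbf f) = \mathbf T(\mathbf g)\circ\mathbf T(\mathbf f)$ and $\mathbf T(\mathrm{id}) = \mathrm{id}$, which is immediate from the formula for composition of $1$-morphisms of $d$-spaces and functoriality of $\mathcal N_\bullet$. Finally I would note that the local statements glue: the sheaves $\mathcal O'_X$, $\mathcal E$ and the morphism $d$ are defined by a sheaf-theoretic recipe from $\mathcal O_{\bullet,X}$, so there is nothing to glue — the verification of the axioms was stalkwise (for locality) and at the level of sheaves (for the rest), and $\mathbf T$ is manifestly natural. The only real work, to reiterate, is the two square-zero-type identities in axiom 3 and confirming the residue field of the thickening $\mathcal O'_X$ is still $\mathbb R$.
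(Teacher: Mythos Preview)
Your overall strategy matches the paper's, and the verifications of axioms 1 and 2, the square-zero property of $d\mathcal E$, and functoriality are all correct and essentially what the paper does (more tersely). However, your argument for $(d\mathcal E)\mathcal E=0$ has a genuine gap. You claim that the product $d^1_1(n)\cdot m$, i.e.\ $s_0(d^1_1(n))\,m$ in $\mathcal O_{1,X}$, lies in $\mathcal N_1^2$, justifying this by ``$d^1_0(d^1_1 n\text{ lifted})$ vanishes''. But the simplicial identity $d^1_0 s_0=\text{identity}$ gives $d^1_0\bigl(s_0(d^1_1(n))\bigr)=d^1_1(n)$, which is nonzero in general; hence $s_0(d^1_1(n))\notin\mathcal N_1$ and the product has no reason to lie in $\mathcal N_1^2$.

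The term $d^2_2(\mathcal N_2)$ in the denominator of $\mathcal E$ is essential here, not just $\mathcal N_1^2$. The paper's computation: for $a_1,a_2\in\mathcal N_1$ one checks that $s_0(a_1)s_1(a_2)-s_1(a_1a_2)\in\mathcal N_2$ (apply $d^2_0$ and $d^2_1$, using $d^1_0 a_i=0$), and then
\[
d^2_2\bigl(s_0(a_1)s_1(a_2)-s_1(a_1a_2)\bigr)=s_0(d^1_1(a_1))\,a_2 - a_1a_2.
\]
Thus $d^1_1(a_1)\cdot a_2\equiv a_1a_2\pmod{d^2_2\mathcal N_2}$, and since $a_1a_2\in\mathcal N_1^2$, the class in $\mathcal E$ vanishes. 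This Eilenberg--Zilber-type correction is precisely the ``fiddly simplicial-algebra point'' you anticipated but did not actually carry out.
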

%%%%%%
\begin{proof} Since cohomology of a normalized complex is isomorphic to the sequence of homotopy groups of the original simplicial module, it is clear that $\mathcal O'_X/d\mathcal E\cong\mathcal O_X:=\pi_0(\mathcal O_{\bullet,X})$. Therefore, to prove that $(X,\mathcal O'_X,\mathcal E,d)$ is a $d$-space, it is enough to show that
	\begin{equation}(d\mathcal E)\mathcal E=0.\end{equation}
Let $U\subseteq X$ be open, and let $a_1,a_2\in\Gamma(U,\mathcal N_1)$. Then 
	\begin{equation}s_0(a_1)s_1(a_2)-s_1(a_1a_2)\in\Gamma(U,\mathcal N_2),\footnote{Here $s_0,s_1:\mathcal O_{1,X}\rightarrow\mathcal O_{2,X}$ are the two degenerations.}\end{equation} 
and clearly 
	\begin{equation}d^2_2(s_0(a_1)s_1(a_2)-s_1(a_1a_2))=d^1_1(a_1)a_2-a_1a_2,\end{equation}
i.e. the class of $d^1_1(a_1)a_2$ in $\mathcal E$ is $0$.

From functoriality of the normalized complex, it is clear that a morphism $\mathbf X\rightarrow\mathbf Y$ induces a $1$-morphism $\mathbf T(\mathbf X)\rightarrow\mathbf T(\mathbf Y)$, and this assignment is functorial.\end{proof}
%%%%%%

\smallskip

From the simplicial enrichment of $\mathbf G_{ft}$ we can obtain a $2$-category as follows. Let $\mathbf G_{ft}^f\subset\mathbf G_{ft}$ be the full subcategory, consisting of fibrant schemes. Then, since every object in $\mathbf G_{ft}$ is cofibrant, for any $\mathbf X,\mathbf Y\in\mathbf G^f_{ft}$ the simplicial set $\underline{Hom}(\mathbf X,\mathbf Y)$ is fibrant. 

Let $\underline{\mathbf G}_{ft}$ be the $2$-category consisting of the same objects and morphisms as $\mathbf G_{ft}^f$, and with $2$-morphisms being homotopy classes of $1$-simplices in mapping spaces of $\mathbf G_{ft}^f$. Clearly $\underline{\mathbf G}_{ft}$ is a $2$-category, where each $2$-morphism is invertible.
%%%%%%
\begin{proposition} The truncation functor $\mathbf T$, defined above, extends to a $2$-functor
	\begin{equation}\xymatrix{\underline{\mathbf G}_{ft}\ar[r] & \underline{G}_{ft}.}\end{equation}
\end{proposition}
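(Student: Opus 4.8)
The plan is to extend the already-constructed $1$-functor $\mathbf{T}\colon\mathbf{G}_{ft}\to\underline{G}_{ft}$ to $2$-morphisms by sending a $1$-simplex $h\in\underline{Hom}(\mathbf X,\mathbf Y)_1$, i.e. a morphism $h\colon\mathbf X\to\mathbf Y^{\Delta[1]}$ with $i_0^*\circ h=\underline\phi$, $i_1^*\circ h=\underline\psi$, to a $C^\infty$-derivation $\eta\colon\mathcal O'_{\mathbf T(\mathbf Y)}\to\phi_*(\mathcal E_{\mathbf T(\mathbf X)})$, and then to check that this assignment descends to homotopy classes and is compatible with vertical and horizontal composition. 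The starting observation is that, by Proposition~\ref{SimplicialStructure}, $\mathbf Y^{\Delta[1]}$ is (locally) the spectrum of the simplicial $C^\infty$-ring $\mathcal O_{\bullet,Y}\otimes\Delta[1]$, so a $1$-simplex is, on the level of rings, a morphism $\mathcal O_{\bullet,Y}\otimes\Delta[1]\to\phi_*(\mathcal O_{\bullet,X})$ restricting to $\phi^*$ and $\psi^*$ at the two endpoints. The combinatorics of $\Delta[1]$ recorded earlier (the maps $\eta_k^i$ and the corresponding tensor factors) show that such a morphism is the same data as the two endpoint morphisms together with a chosen $1$-simplex in degree~$1$: concretely, writing $\Delta[1]_1=\{\eta_1^0,\eta_1^1,\eta_1^2\}$ with $\eta_1^0=s_0 i_0$, $\eta_1^2=s_0 i_1$, $\eta_1^1$ the nondegenerate edge, the component of $h$ at $\eta_1^1$ is a ring map $\mathcal O_{1,Y}\to\phi_*(\mathcal O_{1,X})$ whose associated ``difference from $s_0\phi^*$'' supplies, after passing to the normalized complex, the derivation $\eta$.

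The key steps, in order, are: (1) unwind the definition of a $1$-simplex in $\underline{Hom}(\mathbf X,\mathbf Y)$ into ring-level data using Proposition~\ref{SimplicialStructure} and the explicit description of $C^\infty(-)\otimes\Delta[1]$; (2) from the degree-$1$ component, extract a map $\mathcal N_1^{\mathbf Y}\to\phi_*(\mathcal N_1^{\mathbf X})/(\text{relations})$ which, composed appropriately and using that $d\mathcal E$ is square-zero, yields a well-defined $C^\infty$-derivation $\eta\colon\mathcal O'_Y\to\phi_*(\mathcal E_X)$ making diagram~(\ref{TwoMorphism}) commute — i.e. $\eta$ lifts $\psi'-\phi'$ and $\psi''-\phi''$, which follows from the simplicial identities $d_0^1\eta_1^1$ and $d_1^1\eta_1^1$ being the two endpoints; (3) check that two homotopic $1$-simplices (joined by a $2$-simplex in the Kan complex $\underline{Hom}(\mathbf X,\mathbf Y)$) produce the same derivation, which amounts to the degeneracy/face relations in degree~$2$ forcing the difference of the two degree-$1$ components to land in $d_2^2(\mathcal N_2)+\mathcal N_1^2$, exactly the subsheaf quotiented out in the definition of $\mathcal E$; (4) verify that vertical composition of $1$-simplices (concatenation of paths, i.e. the standard composition in the fundamental groupoid) maps to addition of derivations, matching the vertical composition $\theta\circ\eta:=\eta+\theta$ defined in Section~\ref{The2CategorySection}; and (5) verify the horizontal composition formula, i.e. that whiskering a $1$-simplex by a morphism corresponds on truncations to the pushforward-and-precompose operations $(\phi_2)_*(\eta_1)\circ\phi_2'$ and $(\phi_2)_*(\chi_1'')\circ\eta_2$ appearing in the definition of $\square$.

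I expect the main obstacle to be step (3) together with the bookkeeping in step (5): showing that the derivation is a genuine homotopy invariant of the $1$-simplex requires tracking precisely which combinations of faces and degeneracies of a $2$-simplex in $\mathcal O_{2,X}$ and $\mathcal O_{2,Y}$ appear, and matching the resulting indeterminacy with the subsheaf $d_2^2(\mathcal N_2)+\mathcal N_1^2$ — this is essentially the same normalized-complex computation already used in the proof that $(d\mathcal E)\mathcal E=0$, so the relation $s_0(a_1)s_1(a_2)-s_1(a_1a_2)\in\mathcal N_2$ and its boundary should be the crucial identity, now applied in a relative (bimodule) setting. Once homotopy-invariance is in hand, functoriality on $1$-morphisms (already established) plus the fact that all these operations are built from ring maps, $\mathbf{Sp}$, and the combinatorics of $\Delta[1]$ and $\Delta[2]$ makes the compatibility with horizontal and vertical composition, and with identities, a matter of diagram-chasing; so the whole argument is, as the paper says, ``standard'' modulo this one genuinely substantive compatibility check. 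I would organize the write-up so that the derivation $\eta$ is defined first in the affine case $\mathbf X=\mathbf{Sp}(A_\bullet)$, $\mathbf Y=\mathbf{Sp}(B_\bullet)$, then globalized by noting all constructions are local and natural.
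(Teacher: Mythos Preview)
Your approach is essentially the same as the paper's: both extract the derivation $\eta$ from the $\eta_1^1$-component of the morphism $\mathcal O_{\bullet,Y}\otimes\Delta[1]\to\phi_*(\mathcal O_{\bullet,X})$ and then verify compatibility with horizontal and vertical composition. The paper is slightly more direct in that it precomposes with the degeneration $s\colon\mathcal O_{0,Y}\to\mathcal O_{1,Y}^1$ and projects onto the normalized summand to land immediately in $\phi_*(\mathcal E_X)$, and it leaves your step~(3) (homotopy invariance) entirely implicit; your outline is a correct and more careful elaboration of the same argument.
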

%%%%%%
\begin{proof} Let $\alpha^0,\alpha^1:\mathbf X\rightarrow\mathbf Y$ be two $1$-morphisms in $\underline{\mathbf G}_{ft}$, and let $\beta:\mathbf X\rightarrow\mathbf Y^{\Delta[1]}$ be a $2$-morphism from $\alpha^0$ to $\alpha^1$. This means that $i_0,i_1:\Delta[0]\rightarrow\Delta[1]$ define a commutative diagram
	\begin{equation}\label{SimplicialHomotopy}\xymatrix{& \mathbf X\ar[d]^\beta\ar[ld]_{\alpha^0}\ar[rd]^{\alpha^1} &\\ 
	\mathbf Y & \mathbf Y^{\Delta[1]}\ar[l]^{i^*_0}\ar[r]_{i_1^*} & \mathbf Y}\end{equation}
By definition $\mathbf Y^{\Delta[1]}=(Y,\mathcal O_{\bullet,Y}\otimes\Delta[1])$, and the first two levels of $\mathcal O_{\bullet,Y}\otimes\Delta[1]$ are
	\begin{equation}\mathcal O^0_{0,Y}\coprod\mathcal O^1_{0,Y},\qquad
	\mathcal O_{1,Y}^0\coprod\mathcal O_{1,Y}^1\coprod\mathcal O_{1,Y}^2,\end{equation}
where coproducts are taken in the category of sheaves of $C^\infty$-rings, and the superscript stands for indexing by simplices of $\Delta[1]$. Consider the following map
	\begin{equation}\xymatrix{\nu:\mathcal O_{0,Y}\ar[r]^s & \mathcal O^1_{1,Y}\ar[r]^{\beta^\sharp} & 
	\beta_*(\mathcal O_{1,X}),}\end{equation}
where $s:\mathcal O_{0,Y}\rightarrow\mathcal O_{1,Y}$ is the degeneration. Recall that $\mathcal N_1$ is a direct summand of $\mathcal O_{1,X}$, therefore, composing  with the projection, we get
	\begin{equation}\label{AlmostReduction}\xymatrix{\mathcal O_{0,Y}\ar[r]^{\nu\quad} & \beta_*(\mathcal O_{1,X})\ar[r] & 
	\beta_*(\mathcal N_1)\ar[r] & \beta_*(\mathcal N_1/(d_2\mathcal N_2+\mathcal N_1^2)).}\end{equation}
Since in the second projection we divide by $\mathcal N_1^2$, it is clear that (\ref{AlmostReduction}) factors through $\mathcal O_{0,Y}/(d^1_1\mathcal N_1)^2$, and hence we obtain
	\begin{equation}\xymatrix{\eta:\mathcal O_{0,Y}/(d^1_1\mathcal N_1)^2\ar[r] & 
	\beta_*(\mathcal N_1/(d^2_2\mathcal N_2+\mathcal N_1^2)).}\end{equation}
It is straightforward to check that $\eta$ is a $2$-morphism from $\mathbf{T}(\alpha^0)$ to $\mathbf{T}(\alpha^1)$, and moreover, the composition 
	\begin{equation}\xymatrix{Hom(\mathbf X,\mathbf Y^{\Delta[1]})\times 
	Hom(\mathbf Y,\mathbf Z^{\Delta[1]})\ar[r] &
	Hom(\mathbf X,\mathbf Z^{\Delta[1]}),}\end{equation}
given by the diagonal $\Delta[1]\rightarrow\Delta[1]\times\Delta[1]$, corresponds to the horizontal composition in $\underline{G}_{ft}$. It is immediate to see that $\mathbf{T}$ maps vertical composition in $\mathbf{G}_{ft}$ to the vertical composition in $\underline{G}_{ft}$.\end{proof}
%%%%%%

\smallskip

Since we consider only fibrant cosimplicial $C^\infty$-schemes, a morphism in $\mathbf{G}_{ft}^f$ is a weak equivalence, if and only if it has a quasi-inverse. Therefore, it is clear that $\mathbf T$ maps weak equivalences to equivalences. 

%%%%%%%%%%%%%%%%%%%%%%%%%%%%%%%%%%%%%%%%%%%%%%%%%%%%%%%%%%%%%%%%%%%%%%%%%%%%%%%%
\subsection{D-manifolds}\label{DManifoldsSection}

Let $\mathbf X\in\mathbf G_{ft}$ be a standard Kuranishi neighbourhood (Definition \ref{StandardKuranishi}), i.e. $\mathbf X$ is a homotopy pullback in $\mathbf G_{ft}$ of
	\begin{equation}\xymatrix{&& \mathbb R^0\ar[d]\\ \mathbb R^n\ar[rr]_\nu && \mathbb R^m}\end{equation}
We know that $\mathbf X=\mathbf{Sp}(A_\bullet)$, where $A_\bullet$ is a $1$-skeletal simplicial $C^\infty$-ring, with 
	\begin{equation}A_0=C^\infty(\mathbb R^n),\quad A_1=C^\infty(\mathbb R^{n+m}).\end{equation}
Choosing coordinates $\{x_1,\ldots,x_n\}$, $\{y_1,\ldots,y_m\}$ on $\mathbb R^n$ and $\mathbb R^m$, we have that
	\begin{equation}\forall i,j,\quad d^1_0(y_i):=0,\quad d^1_0(x_j):=x_j,\quad 
	d^1_1(y_i):=f_i,\quad d^1_1(x_j):=x_j,\end{equation}
where $f_i=\nu^*(y_i)$. It is easy to see that
	\begin{equation}A_2\supset\mathcal N_2:=Ker(d^2_0)\cap Ker(d^2_1)=Ker(d^2_0)\cdot Ker(d^2_1),\end{equation}
and hence
	\begin{equation}d^2_2(\mathcal N_2)=\underset{1\leq i,j\leq m}\Sigma(y_i(y_j-f_j)).\end{equation}
Therefore
	\begin{equation}\mathcal E:=\mathcal N_1/(d^2_2(\mathcal N_2)+\mathcal N_1^2)=\end{equation}
	$$=(\underset{1\leq i\leq m}\Sigma(y_i)/\underset{1\leq i,j\leq m}\Sigma(y_i y_j))\underset{C^\infty(\mathbb R^n)}\coprod 
	C^\infty(\mathbb R^n)/\underset{1\leq i\leq m}\Sigma(f_i).$$
In other words, $\mathcal E$ is obtained by taking the bundle of vertical forms on $\xi:\mathbb R^{m+n}\rightarrow\mathbb R^n$ at the $0$-section, and then restricting it to the subscheme $\{\nu=0\}\subseteq\mathbb R^n$. Since fibers of $\xi$ are linear spaces, the bundle of vertical vector fields at the $0$-section is naturally identified with $\xi$ itself, and hence $\mathcal E$ is naturally isomorphic to the bundle, obtained by restricting $\xi^*$ to $\{\nu=0\}$. 

\smallskip

Given an $\mathbf X$ as above, we will call its truncation $\mathbf T(\mathbf X)$ \underline{a standard model} of a $d$-manifold. This obviously agrees with \cite{Jo12}, Definition 5.13, and hence we define \underline{$d$-manifolds} to be $d$-spaces, that are locally equivalent in $\underline{G}$ to standard models. This definition is different from \cite{Jo12} only in that we do not require equidimensionality. We denote by $\underline{Man}_{ft}\subset\underline{G}_{ft}$ the full $2$-subcategory, consisting of $d$-manifolds of finite type.

\smallskip

Recall that $\mathbf{Man}_{ft}\subset\mathbf G_{ft}$ is the full simplicial subcategory, consisting of derived manifolds of finite type. Let $\mathbf{Man}^f_{ft}\subset\mathbf{Man}_{ft}$ be the full simplicial subcategory, consisting of fibrant derived manifolds of finite type. Correspondingly, let $\underline{\mathbf{Man}}_{ft}\subset\underline{\mathbf G}_{ft}$ be the full $2$-subcategory, consisting of derived manifolds of finite type. Since $\underline{\mathbf{Man}}_{ft}\subset\underline{\mathbf{G}}_{ft}$ consists of objects that are locally equivalent to Kuranishi neighbourhoods, it is clear that the $2$-functor $\mathbf T$ maps $\underline{\mathbf{Man}}_{ft}$ to $\underline{Man}_{ft}$.

\smallskip

We would like to investigate the properties of 
	\begin{equation}\xymatrix{\mathbf T:\underline{\mathbf{Man}}_{ft}\ar[r] & \underline{Man}_{ft},}\end{equation}
i.e. to understand if $\mathbf T$ is full, faithful, essentially surjective etc. We will not do this in full generality, but only when restricted to the full $2$-subcategories $\underline{\mathbf{Man}}_{st}\subset\underline{\mathbf{Man}}_{ft}$ and $\underline{Man}_{st}\subset\underline{Man}_{ft}$, consisting of derived manifolds and $d$-manifolds that are $0$-loci of sections of vector bundles over manifolds of finite type. For example all compact derived and $d$-manifolds are of this kind (\cite{Sp10} and \cite{Jo12} Theorems 6.28 and 6.33).

\smallskip

First of all, it is clear that every $d$-manifold in $\underline{Man}_{st}$ is equivalent to the truncation of a derived manifold in $\underline{\mathbf{Man}}_{st}$. In fact we can say more. Let $\mathfrak K$ be the category defined as follows:\begin{itemize}
\item objects are triples $(\mathcal M,E,\sigma)$, where $\mathcal M$ is a manifold of finite type, $E$ is a bundle over $\mathcal M$, and $\sigma$ is a smooth section of $E$;
\item morphisms are pairs of smooth maps $\alpha:\mathcal M\rightarrow\mathcal M'$, $\beta:E\rightarrow E$, s.t. the following diagram is commutative
	\begin{equation}\xymatrix{E\ar[d]\ar[rr]^\beta && E'\ar[d]\\ 
	\mathcal M\ar@<-.7ex>[u]_\sigma\ar@<+.7ex>[u]^0\ar[rr]_\alpha &&
	\mathcal M'\ar@<+.7ex>[u]^0\ar@<-.7ex>[u]_{\sigma'}}\end{equation}
\end{itemize}
There are two functors $\mathfrak K\rightarrow\underline{\mathbf{Man}}_{st}$ and $\mathfrak K\rightarrow\underline{Man}_{st}$. The former is given by taking the homotopy equalizer of $\sigma$ with the $0$-section, and the latter is given by restricting $E^*$ to the $0$-locus of $\sigma$ (\cite{Jo12}, Definition 5.13). As we have seen at the beginning of this section, the truncation functor completes the following commutative diagram
	\begin{equation}\xymatrix{& \mathfrak K\ar[ld]\ar[rd] &\\ 
	\underline{\mathbf{Man}}_{st}\ar[rr]_{\mathbf T} && \underline{Man}_{st}}\end{equation}
It follows immediately that the inclusion of the usual theory of manifolds into $\underline{Man}_{ft}$ factors through $\mathbf{Man}_{ft}$. Another immediate consequence is the following result.
%%%%%%
\begin{proposition}\label{EssentiallySurjective} The truncation $2$-functor $\mathbf T:\underline{\mathbf{Man}}_{st}\rightarrow\underline{Man}_{st}$ induces a surjection between the sets of equivalence classes of objects.\end{proposition}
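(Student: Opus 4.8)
The plan is to extract the statement directly from the commutative triangle of functors displayed immediately above, using only the definition of $\underline{Man}_{st}$ and Proposition \ref{ZeroLocus}. No new construction is needed.

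First I would record what the two legs $\mathfrak K\to\underline{\mathbf{Man}}_{st}$ and $\mathfrak K\to\underline{Man}_{st}$ do. The second sends $(\mathcal M,E,\sigma)$ to the restriction of $E^*$ to the zero-locus of $\sigma$; by the very definition of $\underline{Man}_{st}$ as consisting of $d$-manifolds that are such zero-loci, every object of $\underline{Man}_{st}$ is equivalent in $\underline G$ to one in the image of this functor, i.e. $\mathfrak K\to\underline{Man}_{st}$ is surjective on equivalence classes of objects. The first leg sends $(\mathcal M,E,\sigma)$ to the homotopy equalizer of $\sigma$ with the zero section; by Proposition \ref{ZeroLocus} this is $\mathbf{Sp}(\{C^\infty(E^{\times^k_{\mathcal M}})\}_{k\geq 0})$, and I would check that this object does lie in $\underline{\mathbf{Man}}_{st}$ — it is locally a Kuranishi neighbourhood since $E$ is locally trivial, and it is of finite type because $\pi_0$ of its structure sheaf is a quotient of $C^\infty(\mathcal M)$, hence finitely generated.

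Then the argument is immediate: given $\underline X\in\underline{Man}_{st}$, pick $(\mathcal M,E,\sigma)\in\mathfrak K$ with $\underline X$ equivalent to the image of $(\mathcal M,E,\sigma)$ under $\mathfrak K\to\underline{Man}_{st}$; by commutativity of the triangle this image is $\mathbf T$ applied to the image of $(\mathcal M,E,\sigma)$ under $\mathfrak K\to\underline{\mathbf{Man}}_{st}$, which is an object $\mathbf Y\in\underline{\mathbf{Man}}_{st}$. Hence $\underline X\simeq\mathbf T(\mathbf Y)$, so $\mathbf T$ is essentially surjective onto $\underline{Man}_{st}$, and a fortiori it induces a surjection on equivalence classes of objects.

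I do not expect a real obstacle here; the substance was already absorbed into the construction of the triangle and into Proposition \ref{ZeroLocus}. The only points deserving a sentence are the two bookkeeping checks above: that the homotopy equalizer of a section genuinely belongs to $\underline{\mathbf{Man}}_{st}$, and that ``essentially surjective $2$-functor'' unwinds to ``surjective on equivalence classes of objects'', both of which are immediate from the definitions.
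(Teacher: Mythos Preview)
Your proposal is correct and matches the paper's own treatment: the paper states the proposition as ``another immediate consequence'' of the commutative triangle you describe, without giving any further argument. Your write-up simply spells out the obvious details the paper leaves implicit --- that $\mathfrak K\to\underline{Man}_{st}$ is essentially surjective by the very definition of $\underline{Man}_{st}$, and that commutativity of the triangle then forces $\mathbf T$ to be essentially surjective as well.
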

%%%%%%
Next we investigate the question of fullness. First we consider the special case, when the bundle is trivial and the manifold is just $\mathbb R^n$.
%%%%%%
\begin{lemma}\label{StandardSurjective} Let $\mathbf X,\mathbf Y\in\mathbf{G}_{ft}$, with $\mathbf Y$ being a standard Kuranishi neighbourhood (Definition \ref{StandardKuranishi}). Then the map
	\begin{equation}\xymatrix{\mathbf T:Hom_{\mathbf G_{ft}}(\mathbf{X},\mathbf{Y})\ar[r] & 
	Hom_{\underline{G}_{ft}}(\mathbf T(\mathbf X),\mathbf T(\mathbf Y))}\end{equation}
is surjective.\end{lemma}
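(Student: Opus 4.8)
The plan is to exploit the very explicit description of a standard Kuranishi neighbourhood and of its truncation obtained earlier. Write $\mathbf{Y}=\mathbf{Sp}(A_\bullet)$, where $A_\bullet$ is $1$-skeletal with $A_0=C^\infty(\mathbb R^n)$, $A_1=C^\infty(\mathbb R^{n+m})$, and structure maps encoded by a map $\nu=(f_1,\ldots,f_m):\mathbb R^n\to\mathbb R^m$ as in the preceding computation; correspondingly $\mathbf{T}(\mathbf{Y})=(Y,\mathcal O'_Y,\mathcal E_Y,d)$ with $\mathcal O'_Y=\mathcal O_{0,Y}/(d^1_1\mathcal N_1)^2$ and $\mathcal E_Y$ the restriction of $\xi^*$ to $\{\nu=0\}$. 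Write $\mathbf{X}=\mathbf{Sp}(B_\bullet)$ with $B_0=\Gamma(X,\mathcal O_{0,X})$, and recall $\mathbf{T}(\mathbf{X})=(X,\mathcal O'_X,\mathcal E_X,d)$ with $\mathcal O'_X=\mathcal O_{0,X}/(d^1_1\mathcal N_1(\mathcal O_{\bullet,X}))^2$. Because $\mathbf{T}$ is determined by the first two levels of the simplicial structure, and $A_\bullet$ is $1$-skeletal, a $1$-morphism $\mathbf{T}(\mathbf{X})\to\mathbf{T}(\mathbf{Y})$ should be ``just as much data'' as a simplicial map $A_\bullet\to\Gamma(\mathbf{X})_\bullet$ — the key point being that both are freely determined by where generators go.

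First I would make the target of $\mathbf{T}$ completely concrete. A $1$-morphism $\underline{\phi}:\mathbf{T}(\mathbf{X})\to\mathbf{T}(\mathbf{Y})$ consists of: a continuous map $\phi:X\to Y$; a $C^\infty$-ring morphism $\phi':\mathcal O'_Y\to\phi_*\mathcal O'_X$; and an $\mathcal O'_Y$-module morphism $\phi'':\mathcal E_Y\to\phi_*\mathcal E_X$ compatible with $d$. Since $\mathcal O'_Y$ is a square-zero extension of $\mathcal O_Y=C^\infty(\mathbb R^n)/(f_1,\ldots,f_m)$ by the module $\bigoplus_i(y_i)/(y_iy_j)\otimes\mathcal O_Y$, and $\mathcal E_Y$ is the free $\mathcal O_Y$-module on $y_1,\ldots,y_m$ with $d(y_i)=f_i$, the data of $\underline{\phi}$ amounts precisely to choosing, compatibly on overlaps of a chart by affine pieces: images $\phi'(x_j)\in\Gamma(\mathcal O'_X)$ lifting the images of $x_j$ under $\phi^\sharp$, together with images $\phi''(y_i)\in\Gamma(\mathcal E_X)$ satisfying $d\phi''(y_i)=\phi'(f_i)$ in $\mathcal O'_X$ (note $\phi'(f_i)$ makes sense once the $\phi'(x_j)$ are chosen, and automatically lies in $d\mathcal E_X$ since $f_i\mapsto 0$ in $\mathcal O_Y$). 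So a $1$-morphism is exactly: a point-map plus ``coordinate functions'' $a_j:=\phi'(x_j)\in\Gamma(\mathcal O'_X)$ and ``section values'' $e_i:=\phi''(y_i)\in\Gamma(\mathcal E_X)$ with $da_i$... rather with $\phi'(f_i)=d(e_i)$.

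Next I would write down the preimage. Given such $(\phi, \{a_j\},\{e_i\})$, I want a simplicial $C^\infty$-ring morphism $A_\bullet\to\Gamma(\mathbf{X})_\bullet=\{\Gamma(X,\mathcal O_{\bullet,X})\}$, equivalently (by the adjunction in Proposition~\ref{TheAdjunction}) a morphism $\mathbf{X}\to\mathbf{Y}$. Since $A_\bullet$ is $1$-skeletal, such a morphism is determined by its values on $A_0$ and $A_1$ subject to compatibility with $d^1_0,d^1_1,s_0$, i.e. by a pair of $C^\infty$-ring maps $g_0:C^\infty(\mathbb R^n)\to\mathcal O_{0,X}(X)$ and $g_1:C^\infty(\mathbb R^{n+m})\to\mathcal O_{1,X}(X)$ intertwining the faces and the degeneracy — equivalently, $g_0$ together with a choice of lift of the elements $g_1(y_i)$ realizing the relations $d^1_0 g_1(y_i)=0$, $d^1_1 g_1(y_i)=g_0(f_i)$, $s_0 g_0 = g_1|_{C^\infty(\mathbb R^n)}$. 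I would construct $g_0$ by lifting the maps $\phi^\sharp(x_j)$ through $\mathcal O_{0,X}\twoheadrightarrow\mathcal O_{0,X}/(d^1_1\mathcal N_1)^2=\mathcal O'_X$ — possible because $\mathcal O_{0,X}$ is free-ish / because $C^\infty(\mathbb R^n)$ is free, so the map $g_0$ on generators can be chosen to lift $a_j$, using softness to patch. Then $g_1(y_i)$ is built from $e_i\in\Gamma(\mathcal E_X)=\Gamma(\mathcal N_1/(d^2_2\mathcal N_2+\mathcal N_1^2))$ by lifting to $\mathcal N_1\subset\mathcal O_{1,X}$; one checks $d^1_1$ of this lift equals $g_0(f_i)$ modulo the ambiguity already quotiented out, and $d^1_0$ of it is $0$ by construction of $\mathcal N_1$. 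Finally one verifies $\mathbf{T}$ of the resulting morphism recovers $\underline{\phi}$, which is essentially a matter of unwinding the two constructions, both of which are ``identity on generators'' at the chart level.

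The main obstacle I anticipate is \emph{globalization of the lifts}: the constructions above are natural on each affine chart $\mathbf{Sp}(\text{simplicial }C^\infty\text{-ring})$, but the lifts of $a_j$ through the square-zero surjection $\mathcal O_{0,X}\to\mathcal O'_X$ and of $e_i$ through $\mathcal N_1\to\mathcal E_X$ need to be made to agree on overlaps, or at least the differences between local choices must be shown not to obstruct the existence of a global morphism. This is exactly where separatedness is used: $\mathcal O_{0,X}$ is soft, so $H^1(X,-)=0$ for any sheaf of $\mathcal O_{0,X}$-modules, and the square-zero kernel $(d^1_1\mathcal N_1)^2$ — being a module over $\mathcal O_{0,X}$, in fact over $\mathcal O_X$ — is soft; similarly $d^2_2\mathcal N_2+\mathcal N_1^2$ is soft. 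Hence the sheaf of lifts of $a_j$ (a torsor under a soft sheaf) has a global section, and likewise for $e_i$. Once the local-to-global obstruction is dispatched by softness, the rest is the bookkeeping sketched above. I would also remark that the argument does not yet give injectivity or well-definedness on $2$-morphisms — only surjectivity of $\mathbf{T}$ on $1$-morphism sets — which is all the lemma claims; the subtler faithfulness failure is addressed later via the explicit example.
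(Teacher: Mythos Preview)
Your approach is correct and matches the paper's, but you have overcomplicated one part and underspecified another.

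The ``main obstacle'' you anticipate --- gluing local lifts over charts of $\mathbf X$ --- does not arise. Since $\mathbf X\in\mathbf G_{ft}$, Proposition~\ref{TheAdjunction} gives $\mathbf X\cong\mathbf{Sp}(\Gamma(\mathbf X))$, so $\mathbf X$ is already globally affine and the adjunction reduces the problem to constructing a single simplicial $C^\infty$-ring map $A_\bullet\to\Gamma(X,\mathcal O_{\bullet,X})$. The paper works entirely at the level of global sections; softness is used only implicitly, to guarantee that the quotient maps $\Gamma(X,\mathcal O_{0,X})\twoheadrightarrow\Gamma(X,\mathcal O'_X)$ and $\Gamma(X,\mathcal N_1)\twoheadrightarrow\Gamma(X,\mathcal E_X)$ are surjective on global sections, after which freeness of $C^\infty(\mathbb R^n)$ and $C^\infty(\mathbb R^m)$ produces the lifts $\phi_0$ and $\widetilde\phi_1$ directly. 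No \v Cech argument, no torsors.

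Conversely, the step you pass over too quickly is the one the paper makes explicit: your lift $g_1(y_i)\in\mathcal N_1$ of $e_i$ only satisfies $d^1_1(g_1(y_i))\equiv g_0(f_i)\pmod{(d^1_1\mathcal N_1)^2}$, whereas a simplicial map requires equality on the nose. The paper fixes this by choosing $\epsilon_i\in\Gamma(X,\mathcal N_1^2)$ with $d^1_1(\epsilon_i)$ equal to the discrepancy (possible because $d^1_1$ is a ring homomorphism, hence maps $\mathcal N_1^2$ onto $(d^1_1\mathcal N_1)^2$), and replaces $g_1(y_i)$ by $g_1(y_i)+\epsilon_i$. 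Since $\epsilon_i\in\mathcal N_1^2$, this does not change the truncation, so $\mathbf T$ of the resulting morphism is still $\underline\phi$. This correction is the only genuine content beyond ``lift the generators'', and it is worth stating.
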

%%%%%%
\begin{proof} By assumption $\mathbf{Y}=\mathbf{Sp}(\mathcal B(\mu^*,\nu^*)_\bullet)$, where 
	\begin{equation}\xymatrix{\mu^*,\nu^*:C^\infty(\mathbb R^m)\ar[r] & C^\infty(\mathbb R^n),}\end{equation} 
with $\mu^*$ being evaluation at the origin. Since $\mathbf{Sp}$ is right adjoint to $\Gamma$, we have a natural bijection
	\begin{equation}Hom_{\mathbf G_{ft}}(\mathbf X,\mathbf Y)\cong 
	Hom_{SC^\infty\mathcal R}(\mathcal B(\mu^*,\nu^*)_\bullet,\Gamma(X,\mathcal O_{\bullet,X})).\end{equation}
Let $\{x_1,\ldots,x_n\}$, $\{y_1,\ldots,y_m\}$ be the coordinates on $\mathbb R^n$, $\mathbb R^m$ respectively. Then $\mathcal B(\mu^*,\nu^*)_\bullet$ is a $1$-skeletal, almost free simplicial $C^\infty$-ring, with $x$, $y$ being the almost free generators. This means that the set of morphisms $\mathcal B(\mu^*,\nu^*)_\bullet\rightarrow\Gamma(X,\mathcal O_{\bullet,X})$ is in bijective correspondence with the set of assignments
	\begin{equation}\xymatrix{x_i\ar@{|->}[r] & f_i\in\Gamma(X,\mathcal O_{0,X}), & 
	y_j\ar@{|->}[r] & g_j\in\Gamma(X,\mathcal O_{1,X}),}\end{equation}
such that
	\begin{equation}d^1_0(g_j)=0,\quad d^1_1(g_j)=\nu^*_j(f_1,\ldots,f_n),\end{equation}
where $\nu^*_j:=\nu^*(y_j)\in C^\infty(\mathbb R^n)$.

Let $\underline{\phi}\in Hom_{\underline{G}_{ft}}(\mathbf T(\mathbf X),\mathbf T(\mathbf Y))$. Since $C^\infty(\mathbb R^n)$ is a free $C^\infty$-ring, we can find a $\phi_0$, making the following diagram commutative:
	\begin{equation}\xymatrix{C^\infty(\mathbb R^n)\ar@{->>}[r]\ar@{.>}[d]^{\phi_0} & 
	\Gamma(Y,\mathcal O_{0,Y}/(d^1_1\mathcal N_1)^2)\ar[d]^{\underline{\phi}}\\
	\Gamma(X,\mathcal O_{0,X})\ar@{->>}[r] & \Gamma(X,\mathcal O_{0,X}/(d^1_1\mathcal N_1)^2)}\end{equation}
Similarly, we can find $\widetilde{\phi}_1$, making the following diagram commutative:
	\begin{equation}\xymatrix{C^\infty(\mathbb R^m)\ar[r]\ar@{.>}[d]_{\widetilde{\phi}_1} & 
	\Gamma(Y,\mathcal N_1/(d^2_2\mathcal N_2+\mathcal N_1^2))\ar[d]^{\underline{\phi}}\\
	\Gamma(X,\mathcal N_1)\ar@{->>}[r] & \Gamma(X,\mathcal N_1/(d^2_2\mathcal N_2+\mathcal N_1^2))}\end{equation}
It might happen that $d^1_1(\widetilde{\phi}_1(y_j))\neq\nu^*_j(\phi_0(x_1),\ldots,\phi_0(x_n))$, however, clearly
	\begin{equation}\nu^*_j(\phi_0(x_1),\ldots,\phi_0(x_n))-d^1_1(\widetilde{\phi}_1(y_j))\in (d^1_1\mathcal N_1)^2.\end{equation}
Therefore, we can choose $\epsilon_j\in\Gamma(X,\mathcal N_1^2)$, s.t.
	\begin{equation}d^1_1(\widetilde{\phi}_1(y_j)+\epsilon_j)=\nu^*_j(\phi_0(x_1),\ldots,\phi_0(x_n)).\end{equation}
Thus, defining
	\begin{equation}\xymatrix{x_i\ar@{|->}[r] & \phi_0(x_i), & y_j\ar@{|->}[r] & \widetilde{\phi}_1(y_j)+\epsilon_j}\end{equation}
we are done.\end{proof}
%%%%%%

\smallskip

To extend Lemma \ref{StandardSurjective} to the cases where the codomain is not standard Kuranishi, we will need to glue morphisms, using softness of the structure sheaves. The following result provides the means for this.
%%%%%%
\begin{lemma}\label{SoftCorrection} Let $(\phi,\phi^\sharp):\mathbf X\rightarrow\mathbf Y$ be a morphism in $\mathbf G_{ft}$, and suppose that $\mathbf Y$ is a standard Kuranishi neighbourhood. For each open $U\subseteq Y$ let
	\begin{equation}\mathcal F(U):=\{\psi^\sharp:\mathcal O_{\bullet,Y}|_U\rightarrow\phi_*(\mathcal O_{\bullet,X})|_U\text{ s.t. }
	\mathbf T(\phi|_U,\psi^\sharp)=\mathbf T(\phi,\phi^\sharp)|_U\}.\end{equation}
Then $U\mapsto\mathcal F(U)$ is a soft sheaf (of sets) on $Y$.\end{lemma}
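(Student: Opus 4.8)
The plan is to reduce the statement to the almost free $1$-skeletal presentation of $\mathbf Y$ (Definition \ref{StandardKuranishi}), to identify $\mathcal F$ (based at $\phi^\sharp$) with a sheaf of solutions of an explicit equation built from $\nu^*$, and then to prove softness by a partition-of-unity argument that copes with the nonlinearity of $\nu^*$. That $\mathcal F$ is a sheaf of sets is routine: the presheaf on $Y$ of morphisms $\mathcal O_{\bullet,Y}|_U\to\phi_*(\mathcal O_{\bullet,X})|_U$ of sheaves of simplicial $C^\infty$-rings lying over $\phi|_U$ is already a sheaf (morphisms of sheaves over a fixed continuous map can be glued), and the condition $\mathbf T(\phi|_U,\psi^\sharp)=\mathbf T(\phi,\phi^\sharp)|_U$ is stalk-local; hence $\mathcal F$ is a subsheaf of it, nonempty over every $U$ because it contains $\phi^\sharp|_U$.

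First I would make $\mathcal F$ explicit. Put $\mathcal I:=d^1_1(\mathcal N_1)\subseteq\mathcal O_{0,X}$ and $\mathcal M:=d^2_2(\mathcal N_2)+\mathcal N_1^2\subseteq\mathcal N_1$, so $\mathcal O'_X=\mathcal O_{0,X}/\mathcal I^2$ and $\mathcal E_X=\mathcal N_1/\mathcal M$. Since $\mathbf Y$ is a standard Kuranishi neighbourhood, $\mathcal O_{\bullet,Y}$ is $1$-skeletal and almost free on degree-$0$ generators $x_1,\dots,x_n$ and degree-$1$ generators $y_1,\dots,y_m$, with $d^1_0 y_j=0$ and $d^1_1 y_j=\nu^*_j(x)$. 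Arguing locally exactly as in the proof of Lemma \ref{StandardSurjective} (the adjunction $\Gamma\dashv\mathbf{Sp}$ localizes), a morphism in $\mathcal F(U)$ amounts to sections $\tilde x_i\in\Gamma(\phi^{-1}(U),\mathcal O_{0,X})$ and $\tilde y_j\in\Gamma(\phi^{-1}(U),\mathcal N_1)$ with $d^1_1(\tilde y_j)=\nu^*_j(\tilde x_1,\dots,\tilde x_n)$ such that, in addition, $\tilde x_i-\phi^\sharp(x_i)\in\Gamma(\phi^{-1}(U),\mathcal I^2)$ and $\tilde y_j-\phi^\sharp(y_j)\in\Gamma(\phi^{-1}(U),\mathcal M)$; the last two conditions encode $\mathbf T(\phi|_U,\psi^\sharp)=\mathbf T(\phi,\phi^\sharp)|_U$, because $\mathbf T$ on morphisms is induced by $\psi^\sharp$ in degree $0$ and on $\mathcal N_1$, and $\mathcal O'_Y$ is generated by the classes of the $x_i$ while $\mathcal E_Y$ is generated by the classes of the $y_j$. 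Writing $\tilde x_i=\phi^\sharp(x_i)+a_i$ and $\tilde y_j=\phi^\sharp(y_j)+b_j$ then presents $\mathcal F$ (based at $\phi^\sharp$) as the sheaf of pairs $(a_i,b_j)$ with $a_i\in\Gamma(\phi^{-1}(U),\mathcal I^2)$, $b_j\in\Gamma(\phi^{-1}(U),\mathcal M)$ and $d^1_1(b_j)=\nu^*_j(\phi^\sharp(x)+a)-\nu^*_j(\phi^\sharp(x))$; by the Hadamard lemma the right-hand side lies in $\mathcal I^2$, and $\mathcal I^2=d^1_1(\mathcal N_1^2)=d^1_1(\mathcal M)$.

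Next I would prove softness. The spaces $X$ and $Y$ are paracompact, and $\mathcal O_{0,X}$ is soft (by separatedness of $\mathbf X$), hence the $\mathcal O_{0,X}$-modules $\mathcal I^2$, $\mathcal M$ and $\mathcal K:=\mathcal M\cap\ker(d^1_1)$ are soft and carry $\mathcal O_{0,X}$-valued partitions of unity. Given a closed $Z\subseteq Y$ and a section of $\mathcal F$ over a neighbourhood $U$ of $Z$, with datum $(a_i,b_j)$ over $\phi^{-1}(U)$, I would choose $\rho\in\Gamma(Y,\mathcal O_{0,Y})$ equal to $1$ near $Z$ with $\mathrm{supp}(\rho)\subseteq U$, set $\tilde\rho:=\phi^\sharp(\rho)$ and $a'_i:=\tilde\rho\,a_i\in\Gamma(X,\mathcal I^2)$ (extended by $0$), and then solve $d^1_1(b'_j)=\nu^*_j(\phi^\sharp(x)+a')-\nu^*_j(\phi^\sharp(x))=:R_j\in\Gamma(X,\mathcal I^2)$ for a global $b'_j\in\Gamma(X,\mathcal M)$ with $b'_j=b_j$ near $\phi^{-1}(Z)$ — this is done by patching local $d^1_1$-preimages of $R_j$ with a partition of unity to obtain a global lift $\beta_j$, then correcting by a section of $\mathcal K$ (available by softness of $\mathcal K$) so as to agree with $b_j$ near $\phi^{-1}(Z)$. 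The pair $(\phi^\sharp(x_i)+a'_i,\phi^\sharp(y_j)+b'_j)$ then defines a morphism $\chi^\sharp$ (the relations $d^1_0(\cdot)=0$, $d^1_1(\cdot)=\nu^*_j(\cdot)$ hold by construction, and $\chi^\sharp$ induces the same morphism of underlying $C^\infty$-schemes as $\phi^\sharp$, since $a'_i\in\mathcal I^2\subseteq\mathcal I$ so the images of the $x_i$ in $\mathcal O_{0,X}/\mathcal I$ agree), $\chi^\sharp\in\mathcal F(Y)$ because $a'\in\mathcal I^2$ and $b'\in\mathcal M$, and $\chi^\sharp$ agrees with the given section near $Z$, where $\tilde\rho\equiv1$ and $b'_j=b_j$. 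Thus every germ of a section of $\mathcal F$ along a closed subset extends to a global section, which is the softness of $\mathcal F$.

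The main obstacle will be the nonlinearity of $\nu^*$: one cannot simply rescale a section of $\mathcal F$ by a bump function, since after replacing $a_i$ by $\tilde\rho\,a_i$ the degree-$1$ component has to be re-solved from the constraint $d^1_1(b'_j)=\nu^*_j(\phi^\sharp(x)+a')-\nu^*_j(\phi^\sharp(x))$, and it is precisely this re-solving that forces the use of softness of the modules $\mathcal M$ and $\mathcal K$ together with surjectivity of $d^1_1:\mathcal M\to\mathcal I^2$. A secondary, routine point — also handled by softness — is that the global extension must coincide with the given section on an honest neighbourhood of $Z$, not merely share its truncation there.
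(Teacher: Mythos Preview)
Your proposal is correct and follows essentially the same route as the paper: identify $\mathcal F$ via the almost-free $1$-skeletal presentation of $\mathbf Y$ as pairs $(a_i,b_j)$ with $a_i\in\mathcal I^2$, $b_j\in\mathcal M$ and $d^1_1(b_j)=\nu^*_j(\phi^\sharp(x)+a)-\nu^*_j(\phi^\sharp(x))$, then extend from a closed set by first extending the $a_i$ (softness of $\mathcal I^2$) and then producing a compatible global $b_j$ (softness in degree $1$). The only organisational difference is that the paper extends the $\beta_j$ in one stroke by invoking softness of the preimage sheaf $(d^1_1)^{-1}(\cdot)\subseteq\phi_*(\mathcal M)$, whereas you build a global $d^1_1$-lift by a partition of unity and then correct by a global section of $\mathcal K=\mathcal M\cap\ker(d^1_1)$; these are equivalent, since that preimage sheaf is a torsor under the soft module $\mathcal K$ and hence soft.
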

%%%%%%
\begin{proof} By assumption $\mathbf{Y}=\mathbf{Sp}(\mathcal B(\mu^*,\nu^*)_\bullet)$, where 
	\begin{equation}\xymatrix{\mu^*,\nu^*:C^\infty(\mathbb R^m)\ar[r] & C^\infty(\mathbb R^n),}\end{equation} 
with $\mu^*$ being evaluation at the origin. Let $x_1,\ldots,x_n$, $y_1,\ldots,y_m$ be the coordinates on $\mathbb R^n$, $\mathbb R^m$, and let $\nu^*_j:=\nu^*(y_j)\in C^\infty(\mathbb R^n)$. 

Let $V\subseteq Y$ be closed, we denote by $\{[x_i]_V\}$, $\{[y_j]_V\}$ the images of $\{x_i\}$, $\{y_j\}$ in $\Gamma(V,\mathcal O_{0,Y})$, $\Gamma(V,\mathcal O_{1,Y})$ respectively.

Let $\psi^\sharp\in\mathcal F(V)$, clearly
	\begin{equation}\psi^\sharp([x_i]_V)-\phi^\sharp([x_i]_V)\in\Gamma(V,\phi_*((d^1_1\mathcal N_1)^2)),\end{equation}
and since $\phi_*((d^1_1\mathcal N_1)^2)$ is a soft sheaf, there are
	\begin{equation}\alpha_i\in\Gamma(Y,\phi_*((d^1_1\mathcal N_1)^2))\end{equation}
extending $\psi^\sharp([x_i]_V)-\phi^\sharp([x_i]_V)$. Let $\gamma_i:=\phi^\sharp([x_i]_Y)+\alpha_i$ and consider the sheaves
	\begin{equation}(d^1_1)^{-1}(\nu^*_j(\gamma_1,\ldots,\gamma_n)-d^1_1(\phi^\sharp([y_j]_Y)))\subseteq
	\phi_*(d^2_2\mathcal N_2+\mathcal N_1^2).\end{equation}
These are soft sheaves as well, and hence there are
	\begin{equation}\beta_j\in\Gamma(Y,\phi_*(d^2_2\mathcal N_2+\mathcal N_1^2)),\end{equation}
extending 
	\begin{equation}\psi^\sharp([y_j])-\phi^\sharp([y_j])\in\Gamma(V,\phi_*(d^2_2\mathcal N_2+\mathcal N_1^2)),\end{equation}
and such that
	\begin{equation}d^1_1(\phi^\sharp([y_j]_Y)+\beta_j)=\nu^*_j(\gamma_1,\ldots,\gamma_n).\end{equation}
Now we define an element of $\mathcal F(Y)$ as follows:
	\begin{equation}\xymatrix{[x_i]_Y\ar@{|->}[r] & \phi^\sharp([x_i]_Y)+\alpha_i, & 
	[y_j]_Y\ar@{|->}[r] & \phi^\sharp([y_j]_Y)+\beta_j.}\end{equation}
Clearly this is an extension of $\psi^\sharp$.\end{proof}
%%%%%%

\smallskip

To use lemma \ref{SoftCorrection} we need to modify Lemma \ref{StandardSurjective}, so that the codomains are not necessarily open Kuranishi neighbourhoods, but closed ones. We do this in the following lemma. The proof is straightforward.
%%%%%%
\begin{lemma}\label{ClosedKuranishi} Let $\mathbf X,\mathbf Y\in\mathbf G_{ft}$, let $\iota:Z\subseteq Y$ be a closed subset, and let 
	\begin{equation}\mathbf Y|_Z:=(Z,\iota^{-1}(\mathcal O_{\bullet,Y})).\end{equation}
Suppose that the map
	\begin{equation}\xymatrix{\mathbf T:Hom_{\mathbf G_{ft}}(\mathbf X,\mathbf Y)\ar[r] & 
	Hom_{\underline{G}_{ft}}(\mathbf T(\mathbf X),\mathbf T(\mathbf Y))}\end{equation}
is surjective. Then 
	\begin{equation}\xymatrix{\mathbf T:Hom_{\mathbf G_{ft}}(\mathbf X,\mathbf Y|_Z)\ar[r] & 
	Hom_{\underline{G}_{ft}}(\mathbf T(\mathbf X),\mathbf T(\mathbf Y|_Z))}\end{equation}
is surjective as well.\end{lemma}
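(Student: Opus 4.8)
The plan is to reduce the statement to the universal property of the restriction $\mathbf{Y}|_Z$ together with the fact that truncation commutes with restriction to a closed subset. Two preliminary observations do all the work. First, since $\iota\colon Z\hookrightarrow Y$ is the inclusion of a subspace, the adjunction $\iota^{-1}\dashv\iota_*$ identifies, for every $\mathbf{X}\in\mathbf{G}_{ft}$, the set $Hom_{\mathbf{G}_{ft}}(\mathbf{X},\mathbf{Y}|_Z)$ with the subset of $Hom_{\mathbf{G}_{ft}}(\mathbf{X},\mathbf{Y})$ consisting of those morphisms whose underlying continuous map factors through $Z$; concretely, composition with the canonical projection $r\colon\mathbf{Y}|_Z\to\mathbf{Y}$ (underlying map $\iota$, sheaf map the unit $\mathcal{O}_{\bullet,Y}\to\iota_*\iota^{-1}\mathcal{O}_{\bullet,Y}$) realises this bijection. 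The very same reasoning applies verbatim in $\underline{G}_{ft}$ to the $d$-space $\mathbf{T}(\mathbf{Y})|_Z$: each of the three components $(\phi,\phi',\phi'')$ of a $1$-morphism factors through $\iota_*\iota^{-1}$ of the corresponding sheaf by the same adjunction. In particular $r_*$ and $\mathbf{T}(r)_*$ are injective on all the Hom-sets in question.

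Second, I would check that $\mathbf{T}(\mathbf{Y}|_Z)=\mathbf{T}(\mathbf{Y})|_Z$, compatibly with $r$. This holds because $\iota^{-1}$ is exact, hence commutes with the kernels, images, products of ideals and quotients used to build $\mathcal{N}_k$, $\mathcal{O}'$ and $\mathcal{E}$; softness of $\iota^{-1}\mathcal{O}_{0,Y}$ and local finite generation of $\iota^{-1}\pi_0(\mathcal{O}_{\bullet,Y})$ are furnished by Proposition \ref{Induction}, so $\mathbf{T}(\mathbf{Y}|_Z)$ is a $d$-space of finite type and coincides with the evident restriction $\mathbf{T}(\mathbf{Y})|_Z$, the projection $\mathbf{T}(r)$ being $\mathbf{T}$ applied to $r$.

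Granting these, the lemma is formal. Given $\underline{\phi}\in Hom_{\underline{G}_{ft}}(\mathbf{T}(\mathbf{X}),\mathbf{T}(\mathbf{Y}|_Z))$, post-compose with $\mathbf{T}(r)\colon\mathbf{T}(\mathbf{Y})|_Z\to\mathbf{T}(\mathbf{Y})$ to get $\mathbf{T}(r)\circ\underline{\phi}\in Hom_{\underline{G}_{ft}}(\mathbf{T}(\mathbf{X}),\mathbf{T}(\mathbf{Y}))$, whose underlying map is $\iota$ composed with the underlying map of $\underline{\phi}$ and hence factors through $Z$. By hypothesis there is $f\in Hom_{\mathbf{G}_{ft}}(\mathbf{X},\mathbf{Y})$ with $\mathbf{T}(f)=\mathbf{T}(r)\circ\underline{\phi}$; since $\mathbf{T}$ changes neither the underlying space nor the underlying map, the underlying map of $f$ also factors through $Z$, so by the first preliminary $f=r\circ g$ for a unique $g\in Hom_{\mathbf{G}_{ft}}(\mathbf{X},\mathbf{Y}|_Z)$. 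Then $\mathbf{T}(r)\circ\mathbf{T}(g)=\mathbf{T}(f)=\mathbf{T}(r)\circ\underline{\phi}$, and injectivity of $\mathbf{T}(r)_*$ forces $\mathbf{T}(g)=\underline{\phi}$. Thus $\mathbf{T}\colon Hom_{\mathbf{G}_{ft}}(\mathbf{X},\mathbf{Y}|_Z)\to Hom_{\underline{G}_{ft}}(\mathbf{T}(\mathbf{X}),\mathbf{T}(\mathbf{Y}|_Z))$ is surjective.

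The only non-formal ingredients are the two preliminaries, and both are immediate once one recalls that $\iota^{-1}$ is exact and left adjoint to $\iota_*$. I expect the mild bookkeeping with the three components $(\phi,\phi',\phi'')$ of a $d$-space $1$-morphism to be the most error-prone point, but it presents no genuine obstacle — which is presumably why the paper labels the proof \emph{straightforward}.
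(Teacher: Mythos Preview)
Your proposal is correct and is presumably exactly what the paper has in mind: the paper gives no proof beyond declaring it ``straightforward,'' and the argument you supply --- the $\iota^{-1}\dashv\iota_*$ adjunction identifying morphisms into $\mathbf{Y}|_Z$ with morphisms into $\mathbf{Y}$ whose underlying map lands in $Z$, the exactness of $\iota^{-1}$ giving $\mathbf{T}(\mathbf{Y}|_Z)=\mathbf{T}(\mathbf{Y})|_Z$, and the fact that $\mathbf{T}$ leaves the underlying continuous map unchanged --- is the natural way to unpack that word. The only spot to be careful is the injectivity of $\mathbf{T}(r)_*$, but this is again just the adjunction applied componentwise to $(\phi,\phi',\phi'')$, as you note.
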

%%%%%%
Now we are ready to show that $\mathbf T$ is almost $1$-full. The following proposition makes this statement precise.
%%%%%%
\begin{proposition}\label{OneFullness} Let $\mathbf X\in\underline{\mathbf{G}}_{ft}$, $\mathbf Y\in\underline{\mathbf{Man}}_{st}$, and let 
	\begin{equation}\underline{\phi}:\mathbf T(\mathbf X)\rightarrow\mathbf T(\mathbf Y)\end{equation} 
be a $1$-morphism between the corresponding truncations. There is a morphism $\Phi:\mathbf{X}\rightarrow\mathbf{Y}$, s.t.
	\begin{equation}\mathbf T(\Phi)\cong\underline{\phi}.\end{equation}
\end{proposition}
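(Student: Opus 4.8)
The plan is to deduce the proposition from the local lifting results already in hand, Lemmas \ref{StandardSurjective} and \ref{ClosedKuranishi}, together with the softness supplied by Lemma \ref{SoftCorrection}; the work lies entirely in organising a patching argument, since $\mathbf{Y}$ is \emph{not} globally a restriction of a standard Kuranishi neighbourhood (its virtual dimension obstructs this) and the lemmas therefore apply only locally on $\mathbf{Y}$.

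First I would unwind $\mathbf{Y}\in\underline{\mathbf{Man}}_{st}$: it is the homotopy equalizer of a smooth section $\sigma$ and the zero section of a vector bundle $E\to\mathcal{M}$ over a manifold of finite type, and by Proposition \ref{ZeroLocus} it is $\mathbf{Sp}(\{C^\infty(E^{\times^k_{\mathcal M}})\}_{k\geq 0})$ with underlying space $Z:=\sigma^{-1}(0)\subseteq\mathcal M$. Since $Z$ is locally compact, second countable and Hausdorff, I would fix a countable, locally finite closed cover $\{V_\alpha\}$ of $Z$ with each $V_\alpha$ compact and contained in an open $U_\alpha\subseteq\mathcal M$ which is a coordinate chart over which $E$ trivialises. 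For each $\alpha$, choose a smooth map $F_\alpha$ on the ambient $\mathbb R^{n_\alpha}$ agreeing near $V_\alpha$ with the local representative of $\sigma$ in the chart and trivialisation, and let $\mathbf{W}_\alpha$ be the associated standard Kuranishi neighbourhood (Definition \ref{StandardKuranishi}). Comparing germs of the structure sheaves shows $\mathbf{Y}|_{V_\alpha}\cong\mathbf{W}_\alpha|_{V_\alpha}$ in $\mathbf{G}_{ft}$ (here $V_\alpha$ is closed in the underlying space $F_\alpha^{-1}(0)$ of $\mathbf{W}_\alpha$ because it is compact).

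Next I would produce local lifts. Let $\phi\colon X\to Z$ be the underlying map of $\underline{\phi}$; restricting $\mathbf{X}$ to the closed set $\phi^{-1}(V_\alpha)$ keeps it in $\mathbf{G}_{ft}$ (and fibrant) by Proposition \ref{Induction}, and since $\mathbf{T}$ is defined via fibrewise kernels and quotients of the $\mathcal{O}_{k,X}$, which commute with $\iota^{-1}$, it commutes with this restriction. Hence Lemma \ref{StandardSurjective} together with Lemma \ref{ClosedKuranishi} yields, for each $\alpha$, a morphism $\Phi_\alpha\colon\mathbf{X}|_{\phi^{-1}(V_\alpha)}\to\mathbf{Y}|_{V_\alpha}$ with $\mathbf{T}(\Phi_\alpha)=\underline{\phi}|_{V_\alpha}$ (the underlying map of any such lift is forced to be $\phi$). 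Consequently the presheaf of sets on $Z$
$$\mathcal{F}(U):=\bigl\{\psi^\sharp\colon\mathcal{O}_{\bullet,Y}|_U\to\phi_*(\mathcal{O}_{\bullet,X})|_U\ \text{s.t.}\ \mathbf{T}(\phi|_U,\psi^\sharp)=\underline{\phi}|_U\bigr\},$$
which is visibly a sheaf, is non-empty over each $V_\alpha$. Finally I would patch: by Lemma \ref{SoftCorrection} applied over the local model $\mathbf{W}_\alpha|_{V_\alpha}$ with basepoint $\Phi_\alpha$, the restriction $\mathcal{F}|_{V_\alpha}$ is soft, and globally $\mathcal{F}$ is a pseudo-torsor under the abelian sheaf of $C^\infty$-derivations $\mathcal{O}'_Y\to\phi_*(\mathcal{E}_X)$, which is soft because $\mathcal{E}_X$ is a module over the soft sheaf $\mathcal{O}_{0,X}$ and sections of its pushforward over a closed $C$ extend (they are sections of $\mathcal{E}_X$ over the closed set $\phi^{-1}(C)$). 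Since $Z$ is paracompact and $\{V_\alpha\}$ is a locally finite closed cover, the standard vanishing of $H^1$ with coefficients in a soft abelian sheaf shows this locally non-empty torsor has a global section; equivalently, a chosen section over $V_{\alpha_0}$ extends to all of $Z$. Any $\phi^\sharp\in\mathcal{F}(Z)$ then gives $\Phi:=(\phi,\phi^\sharp)\colon\mathbf{X}\to\mathbf{Y}$ with $\mathbf{T}(\Phi)=\underline{\phi}$, hence in particular $\mathbf{T}(\Phi)\cong\underline{\phi}$.

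\textbf{The main obstacle} I anticipate is precisely this globalisation of Lemma \ref{SoftCorrection}: softness there is proved only when the whole target is a standard Kuranishi neighbourhood, and upgrading it to a statement about the globally defined sheaf $\mathcal{F}$ on $Z$ forces two things — a careful choice of local models identifying $\mathbf{Y}|_{V_\alpha}$ with closed restrictions of standard Kuranishi neighbourhoods (no single such identification exists globally, because of the virtual dimension), and the cohomological input that a locally non-empty torsor under a soft abelian sheaf on a paracompact space has a global section. The remaining ingredients — the explicit form of $\mathbf{Y}$ from Proposition \ref{ZeroLocus}, preservation of $\mathbf{G}_{ft}$ and of $\mathbf{T}$ under restriction to compact subsets, and the germ comparison $\mathbf{Y}|_{V_\alpha}\cong\mathbf{W}_\alpha|_{V_\alpha}$ — are routine.
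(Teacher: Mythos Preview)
Your overall strategy matches the paper's exactly: write $\mathbf Y$ via Proposition~\ref{ZeroLocus}, choose a countable closed cover of $Y$ by pieces on which $\mathbf Y$ is (a closed restriction of) a standard Kuranishi neighbourhood, obtain local lifts from Lemmas~\ref{StandardSurjective} and~\ref{ClosedKuranishi}, and glue using the softness supplied by Lemma~\ref{SoftCorrection}. The paper carries out the gluing by a direct countable induction (dressed up as Zorn's lemma): having a lift over $\bigcup_{i\leq k}\mathbf Y_i$, one uses softness of $\mathcal F$ on $\mathbf Y_{k+1}$ to extend the restriction from the closed intersection, and then glues.

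The genuine gap in your write-up is the final cohomological step. Your claim that $\mathcal F$ is a pseudo-torsor under the abelian sheaf of $C^\infty$-derivations $\mathcal O'_Y\to\phi_*(\mathcal E_X)$ is not correct, and there is no evident abelian sheaf acting simply transitively on $\mathcal F$. Two sections $\psi^\sharp,(\psi')^\sharp\in\mathcal F(U)$ agree after truncation, so at level~$0$ the difference $\psi^\sharp_0-(\psi')^\sharp_0$ takes values in $\phi_*\bigl((d^1_1\mathcal N_1)^2\bigr)$; but this ideal is \emph{not} square-zero in $\phi_*(\mathcal O_{0,X})$ (nothing in the hypotheses on $\mathbf X\in\underline{\mathbf G}_{ft}$ forces $(d^1_1\mathcal N_1)^4=0$), so the difference is not a derivation, and in any case it does not land in $\phi_*(\mathcal E_X)$. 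Conversely, a derivation $\mathcal O'_Y\to\phi_*(\mathcal E_X)$ has no canonical lift to a map into $\phi_*(\mathcal O_{0,X})$ or $\phi_*(\mathcal N_1)$, so it does not act on $\mathcal F$. (Derivations of that shape parametrise $2$-morphisms in $\underline G_{ft}$, i.e.\ homotopies between fixed truncated $1$-morphisms, which is a different problem.) Consequently the appeal to $H^1$-vanishing for a soft abelian coefficient sheaf does not apply.

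The fix is exactly what the paper does, and you already have the ingredients: you have established that $\mathcal F|_{V_\alpha}$ is soft and non-empty for every $\alpha$. From this alone one gets a global section by the elementary induction over the countable closed cover: given $s\in\mathcal F\bigl(\bigcup_{i\leq k}V_i\bigr)$, softness of $\mathcal F|_{V_{k+1}}$ extends $s|_{(\bigcup_{i\leq k}V_i)\cap V_{k+1}}$ to $V_{k+1}$, and the sheaf property on the closed cover glues; local finiteness then gives the limit section on $Z$. No torsor structure is needed.
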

%%%%%%
\begin{proof} By assumption $\mathbf Y$ is the homotopy equalizer in the diagram
	\begin{equation}\xymatrix{\mathcal M\ar@<+.5ex>[r]^0\ar@<-.5ex>[r]_\sigma & E}\end{equation}
where $\mathcal M$ is a manifold of finite type, $E$ is a vector bundle over $\mathcal M$, and $0,\sigma$ are sections of $E$. Therefore, according to Proposition \ref{ZeroLocus}, we can assume that 
	\begin{equation}\mathbf Y=\mathbf{Sp}(\{C^\infty(E^{\times_\mathcal M^k})\}_{k\geq 0}).\end{equation}
Since $\mathcal M$ is second countable, we can cover $\mathcal M$ with a countable family of closed subsets $\{\mathcal M_i\}$, s.t. over each $\mathcal M_i$ the bundle $E$ is trivial. Then we have $\mathbf Y=\underset{i\in\mathbb N}\bigcup \mathbf Y_i$, where $\mathbf Y_i$ is the $0$-locus of $\sigma:\mathcal M_i\rightarrow E_i$, and hence each $\mathbf Y_i$ is a closed subscheme of a standard Kuranishi neighbourhood.	Let $\{\mathbf X_i\}$ be the pre-images of $\{\mathbf Y_i\}$.

Now we define a partially ordered set $P$ as follows. Elements of $P$ are morphisms 
	\begin{equation}\xymatrix{\Phi_k:\underset{1\leq i\leq k}\bigcup\mathbf X_i\ar[r] & 
	\underset{1\leq i\leq k}\bigcup\mathbf Y_i, & k\in\mathbb N\cup\{\infty\},}\end{equation}
lifting the corresponding restriction of $\underline{\phi}$, and $\Phi_k\geq\Phi_l$ if $k\geq l$, and the restriction of $\Phi_k$ to $\underset{1\leq i\leq l}\bigcup\mathbf X_i$ equals $\Phi_l$. It is clear that the set $P$ satisfies the conditions of Zorn lemma, and hence it has maximal elements. If the maximal element has index $\infty$ (and in particular an $\infty$-indexed element exists), we are done, since we have found a lift of $\underline{\phi}$. 

Suppose a maximal element is $\Phi_k$, with $k<\infty$. From Lemma \ref{StandardSurjective} and Lemma \ref{ClosedKuranishi} we know that
	\begin{equation}\xymatrix{Hom_{\mathbf{G}_{ft}}(\mathbf X_{k+1},\mathbf Y_{k+1})\ar[r] &
	Hom_{\underline{G}_{ft}}(\mathbf T(\mathbf X_{k+1}),\mathbf T(\mathbf Y_{k+1}))}\end{equation}
is surjective, hence $\underline{\phi}|_{\mathbf X_{k+1}}$ has a lift. From Lemma \ref{SoftCorrection} we know that we can choose a lift that agrees with $\Phi_k$, restricted to $(\underset{1\leq i\leq k}\bigcup\mathbf X_i)\cap\mathbf X_{k+1}$. Thus $\Phi_k$ cannot be a maximal element.\end{proof}
%%%%%%

\smallskip

It remains to show that $\mathbf T$ detects equivalences. This is done in the following proposition.
%%%%%%
\begin{proposition}\label{Detection} Let $\Phi:\mathbf X\rightarrow\mathbf Y$ be a morphism in $\underline{\mathbf{Man}}_{ft}$, s.t. $\mathbf T(\Phi)$ is an equivalence. Then $\Phi$ is a weak equivalence.\end{proposition}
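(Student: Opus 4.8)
The plan is to use the fact that a morphism between fibrant objects in $\mathbf{G}_{ft}$ is a weak equivalence if and only if it admits a quasi-inverse, and to detect this via mapping spaces. Since $\Phi:\mathbf{X}\rightarrow\mathbf{Y}$ lies in $\underline{\mathbf{Man}}_{ft}$, both $\mathbf{X}$ and $\mathbf{Y}$ are fibrant derived manifolds. Being a weak equivalence is a local property (one can check it on stalks of the structure sheaves, i.e.\ on homotopy groups), so it suffices to work locally on $\mathbf{Y}$, where we may assume $\mathbf{Y}$ is a standard Kuranishi neighbourhood and $\mathbf{X}$ is as well. The key observation is that $\mathbf{T}$ induces a map on $2$-categorical hom-sets that we have already shown to be well-behaved: by the refinement of Lemma \ref{StandardSurjective} via Lemma \ref{SoftCorrection} and Proposition \ref{OneFullness}, $\mathbf{T}$ is full on $1$-morphisms into standard models and, crucially, the induced map on isomorphism classes of $1$-morphisms (i.e.\ on connected components of mapping spaces) is a bijection when the target is standard.

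Concretely, I would proceed as follows. First, since $\mathbf{T}(\Phi)$ is an equivalence in $\underline{G}_{ft}$, there is a $1$-morphism $\underline{\psi}:\mathbf{T}(\mathbf{Y})\rightarrow\mathbf{T}(\mathbf{X})$ with $\underline{\psi}\circ\mathbf{T}(\Phi)\cong\mathrm{id}$ and $\mathbf{T}(\Phi)\circ\underline{\psi}\cong\mathrm{id}$. Working locally so that $\mathbf{X}$ (and $\mathbf{Y}$) are standard Kuranishi neighbourhoods, apply Proposition \ref{OneFullness} to lift $\underline{\psi}$ to a morphism $\Psi:\mathbf{Y}\rightarrow\mathbf{X}$ with $\mathbf{T}(\Psi)\cong\underline{\psi}$. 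Then $\mathbf{T}(\Psi\circ\Phi)\cong\mathbf{T}(\Psi)\circ\mathbf{T}(\Phi)=\underline{\psi}\circ\mathbf{T}(\Phi)\cong\mathbf{T}(\mathrm{id}_{\mathbf{X}})$, and similarly for $\Phi\circ\Psi$. The next step is the heart of the matter: deduce from $\mathbf{T}(\Psi\circ\Phi)\cong\mathbf{T}(\mathrm{id})$ that $\Psi\circ\Phi$ is itself homotopic to the identity in $\mathbf{G}_{ft}$. For this I would show that $\mathbf{T}$ induces an injection on $\pi_0$ of the relevant mapping spaces when the target is a standard Kuranishi neighbourhood — i.e.\ two morphisms $\mathbf{X}\rightrightarrows\mathbf{Y}$ with homotopic truncations are themselves homotopic. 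This follows by analysing the simplicial homotopy: a $2$-morphism between truncations, by the construction in the proof that $\mathbf{T}$ extends to a $2$-functor, is recorded by a derivation built from the degeneracy $s:\mathcal{O}_{0,Y}\rightarrow\mathcal{O}^1_{1,Y}$, and over a standard model (where the simplicial ring is $1$-skeletal and almost free) one can integrate such a derivation back to an actual $1$-simplex in $\underline{Hom}(\mathbf{X},\mathbf{Y})$, again using softness to glue as in Lemma \ref{SoftCorrection}. Hence $\Psi\circ\Phi\simeq\mathrm{id}_{\mathbf{X}}$ and $\Phi\circ\Psi\simeq\mathrm{id}_{\mathbf{Y}}$ as $1$-morphisms in $\mathbf{G}_{ft}^f$.

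Having a homotopy inverse to $\Phi$ locally, I would then note that a morphism that is locally a weak equivalence is a weak equivalence, because weak equivalences in $\mathbf{G}_{ft}$ are detected on the homotopy sheaves $\pi_n(\mathcal{O}_{\bullet,-})$, which is a stalkwise (hence local) condition; on the underlying spaces, $\Phi$ restricted to each member of the cover is a homotopy equivalence onto its image, so the induced map of topological spaces is a homeomorphism. Thus $\Phi$ is a weak equivalence globally.

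The main obstacle is the injectivity-on-$\pi_0$ step — showing that homotopic truncations lift to homotopic morphisms. Fullness (Proposition \ref{OneFullness}) only gives surjectivity on $1$-morphisms; to recover that $\Psi\circ\Phi$ is \emph{homotopic} to the identity (not merely that some lift of the identity truncation exists) one must control the $2$-morphism exhibiting $\mathbf{T}(\Psi\circ\Phi)\cong\mathbf{T}(\mathrm{id})$ and promote it to a genuine simplicial homotopy. The delicate point is that $\mathbf{T}$ discards the higher simplicial levels $\mathcal{O}_{n,X}$ for $n\geq 2$; over a \emph{general} derived manifold this information is genuinely lost (indeed the paper's final example shows $\mathbf{T}$ is not faithful), so the argument must exploit that standard Kuranishi neighbourhoods are $1$-skeletal and almost free, so that no information beyond level $1$ is present and the derivation recording the $2$-morphism does determine the homotopy. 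Packaging this so that the local homotopy inverses assemble coherently — or, more cheaply, observing that we only need a weak equivalence and not a coherent inverse, so a local-to-global argument on homotopy sheaves suffices — is what makes the proof go through.
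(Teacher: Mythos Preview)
Your argument has a genuine gap at precisely the step you flag as ``the heart of the matter'': the claim that $\mathbf T$ is injective on $\pi_0$ of the mapping space when the target is a standard Kuranishi neighbourhood is false. The paper's closing counterexample exhibits this directly. There $\mathbf X$ is the standard Kuranishi neighbourhood for the zero section $\mathbb R^0\to\mathbb R$ (so $1$-skeletal, almost free, as simple as possible), $\mathbf Y$ is the standard Kuranishi neighbourhood for $\nu:\mathbb R^2\to\mathbb R^5$, and the two morphisms $\Phi,\Psi:\mathbf Y\to\mathbf X$ given by $t\mapsto 0$ and $t\mapsto u_1u_2-u_3^2$ satisfy $\mathbf T(\Phi)=\mathbf T(\Psi)$ (because $u_1u_2-u_3^2\in\mathcal N_1^2$) while $\Phi\not\simeq\Psi$ (because $[u_1u_2-u_3^2]\neq 0$ in $\pi_1(A_\bullet)$). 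Thus $1$-skeletality of the \emph{target} does not save you: the obstruction lives in $\pi_1$ of the \emph{source}, and the quotient by $\mathcal N_1^2$ in the definition of $\mathbf T$ kills genuine homotopy information there. Your proposed ``integration'' of the derivation back to a simplicial homotopy cannot succeed in general.

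The paper's proof takes a different route that sidesteps this entirely. After localizing to germs (as you also do), it replaces $\mathbf X$ and $\mathbf Y$ by \emph{minimal} local presentations, i.e.\ standard Kuranishi neighbourhoods built from $\nu:\mathbb R^n\to\mathbb R^{m+n}$ with $n$ as small as possible. For minimal models one invokes Joyce's analysis (\cite{Jo12}, \S5.3): an equivalence between truncations of minimal standard models is induced by an actual \emph{isomorphism} of the models, not merely a weak equivalence. This is much stronger than what you were trying to prove and does not require lifting $2$-morphisms; the rigidity of minimal presentations is what makes it work. Your overall strategy of lifting a quasi-inverse is reasonable in spirit, but without minimality there is no mechanism to control the discrepancy between $\Psi\circ\Phi$ and the identity.
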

%%%%%%
\begin{proof} For $\Phi$ to be a weak equivalence is a local property, since $\mathbf T(\Phi)$ being an equivalence already ensures that the underlying map of topological spaces is a homeomorphism. Therefore, we can assume that $\mathbf X,\mathbf Y$ are germs of derived manifolds. 

So let $\nu:\mathbb R^n\rightarrow\mathbb R^{m+n}$, $\nu':\mathbb R^{n'}\rightarrow\mathbb R^{m'+n'}$ be some sections of trivial bundles, and suppose $\mathbf X$, $\mathbf Y$ are germs of $\{\nu=0\}$, $\{\nu'=0\}$ respectively. We can always find weakly equivalent minimal presentations, i.e. those with minimal $n$ and $n'$. For minimal $n$, $n'$ one can show, as in \cite{Jo12} section 5.3, that  $\mathbf T(\Phi):\mathbf T(\mathbf X)\rightarrow\mathbf T(\mathbf Y)$ being an equivalence implies that $\Phi$ is an isomorphism.\end{proof}
%%%%%%

\smallskip

Put altogether, Propositions \ref{EssentiallySurjective}, \ref{OneFullness}, \ref{Detection} give the following statement.
%%%%%%
\begin{theorem} The truncation $2$-functor
	\begin{equation}\xymatrix{\mathbf T:\underline{\mathbf{Man}}_{st}\ar[r] & \underline{Man}_{st}}\end{equation} 
induces a full and essentially surjective $1$-functor between the $1$-categories, obtained by identifying isomorphic $1$-morphisms. Moreover, the map between the equivalence classes of objects is a bijection.\end{theorem}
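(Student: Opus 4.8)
The plan is to assemble the three ingredients that have just been proved, since each one feeds directly into one of the three claims. Recall that by $\underline{\mathbf{Man}}_{st}$ and $\underline{Man}_{st}$ we mean the full sub-$2$-categories on $0$-loci of sections of vector bundles over manifolds of finite type, and the $1$-categories in the statement are obtained by passing from $2$-morphisms to isomorphism classes of $1$-morphisms. Write $\mathrm{ho}\,\underline{\mathbf{Man}}_{st}$ and $\mathrm{ho}\,\underline{Man}_{st}$ for these homotopy $1$-categories, and $T$ for the induced $1$-functor.

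First I would address \emph{essential surjectivity and the bijection on objects}. Surjectivity on equivalence classes of objects is exactly Proposition~\ref{EssentiallySurjective}. For injectivity, suppose $\mathbf X,\mathbf Y\in\underline{\mathbf{Man}}_{st}$ have $\mathbf T(\mathbf X)\simeq\mathbf T(\mathbf Y)$ in $\underline{Man}_{st}$. By fullness (Proposition~\ref{OneFullness}, applied after replacing $\mathbf X$, $\mathbf Y$ by fibrant models) the equivalence $\underline\phi:\mathbf T(\mathbf X)\to\mathbf T(\mathbf Y)$ lifts to a morphism $\Phi:\mathbf X\to\mathbf Y$ with $\mathbf T(\Phi)\cong\underline\phi$; since $\underline\phi$ is an equivalence, so is $\mathbf T(\Phi)$, and Proposition~\ref{Detection} then forces $\Phi$ to be a weak equivalence. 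Hence $\mathbf X$ and $\mathbf Y$ are equivalent in $\underline{\mathbf{Man}}_{st}$, giving injectivity; together with Proposition~\ref{EssentiallySurjective} this is the asserted bijection, and in particular $T$ is essentially surjective on the homotopy categories.

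Next, \emph{fullness of} $T$. Given $\mathbf X,\mathbf Y\in\underline{\mathbf{Man}}_{st}$ and a morphism $\underline\phi$ in $\mathrm{ho}\,\underline{Man}_{st}$ from $\mathbf T(\mathbf X)$ to $\mathbf T(\mathbf Y)$ — i.e. an isomorphism class of $1$-morphisms $\mathbf T(\mathbf X)\to\mathbf T(\mathbf Y)$ in $\underline{Man}_{st}$ — pick a representative $1$-morphism. Replace $\mathbf X$ by a weakly equivalent fibrant object (allowed, since we have passed to homotopy categories and $\mathbf T$ sends weak equivalences to equivalences, by the remark at the end of Section~\ref{TruncationSection}) so that Proposition~\ref{OneFullness} applies with $\mathbf X\in\underline{\mathbf G}_{ft}$ and $\mathbf Y\in\underline{\mathbf{Man}}_{st}$. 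That proposition produces $\Phi:\mathbf X\to\mathbf Y$ with $\mathbf T(\Phi)\cong\underline\phi$, i.e. $\mathbf T(\Phi)$ and the chosen representative are $2$-isomorphic, hence equal in $\mathrm{ho}\,\underline{Man}_{st}$. Thus $[\Phi]$ is a preimage of $\underline\phi$ under $T$ on $\mathrm{Hom}$-sets, which is fullness. I should also check the routine point that $T$ is well defined as a $1$-functor on homotopy categories, i.e. that it respects the identification of $2$-isomorphic $1$-morphisms — this is immediate from $\mathbf T$ being a $2$-functor, since a $2$-morphism maps to a $2$-morphism.

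The main obstacle is bookkeeping rather than mathematics: one must be careful that Propositions~\ref{OneFullness} and \ref{Detection} are stated for $\underline{\mathbf{G}}_{ft}$ and $\underline{\mathbf{Man}}_{ft}$ and involve implicit fibrancy/cofibrancy replacements, whereas the theorem concerns the homotopy $1$-categories of $\underline{\mathbf{Man}}_{st}$ and $\underline{Man}_{st}$; so the real work is checking that these replacements are harmless at the level of homotopy categories (they are, because $\mathbf T$ inverts weak equivalences and every object of $\mathbf G_{ft}$ is cofibrant, so only a fibrant replacement is ever needed, and $\underline{\mathbf{Man}}_{st}$ is closed under it up to equivalence) and that ``full'' and ``essentially surjective'' are exactly the combination of the three Propositions. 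No single step is hard; the content of the theorem is entirely in Propositions~\ref{EssentiallySurjective}, \ref{OneFullness}, \ref{Detection}, and this proof merely records how they combine.
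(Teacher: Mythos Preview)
Your proposal is correct and follows exactly the paper's approach: the paper's proof is the single sentence ``Put altogether, Propositions~\ref{EssentiallySurjective}, \ref{OneFullness}, \ref{Detection} give the following statement,'' and you have simply spelled out how the three propositions combine (surjectivity on objects from Proposition~\ref{EssentiallySurjective}, fullness from Proposition~\ref{OneFullness}, and injectivity on objects by lifting an equivalence via Proposition~\ref{OneFullness} and then applying Proposition~\ref{Detection}). One minor remark: your discussion of fibrant replacement is unnecessary, since by definition $\underline{\mathbf G}_{ft}$ (and hence $\underline{\mathbf{Man}}_{st}$) already consists only of fibrant objects, and the models of Proposition~\ref{ZeroLocus} are shown to be fibrant immediately after that proposition.
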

%%%%%%
This $1$-functor is, however, not faithful, as shown in the following example. Let $\nu:\mathbb R^2\rightarrow\mathbb R^3$ be defined by
	\begin{equation}\nu^*(u_1):=x^2,\quad\nu^*(u_2):=y^2,\quad\nu^*(u_3):=x y,\end{equation}
where $x,y$ and $u_1,u_2,u_3$ are the coordinates. Let $E:=\mathbb R^5\rightarrow\mathbb R^2$ be the trivial bundle, and let $\mathbf Y$ be the homotopy equalizer of
	\begin{equation}\xymatrix{\mathbb R^2\ar@<+.5ex>[r]^0\ar@<-.5ex>[r]_\nu & E.}\end{equation}
According to Proposition \ref{ZeroLocus} we can write $\mathbf Y$ as $\mathbf{Sp}(A_\bullet)$, where 
	\begin{equation}A_\bullet:=\{C^\infty(E^{\times_{\mathbb R^3}^k})\}_{k\geq 0}.\end{equation}
It is easy to see that $u_1u_2-u^2_3\in Ker(d^1_0)\cap Ker(d^1_1)\subset A_1$. Moreover, the class of $u_1u_2-u_3^2$ in $\pi_1(A_\bullet)$ is not trivial. Yet, $u_1u_2-u_3^2$ vanishes in the truncation to $d$-manifolds, since obviously
	\begin{equation}u_1u_2-u_3^2\in(Ker(d^1_0))^2\subset A_1.\end{equation}
Let $\mathbf X$ be the homotopy equalizer of
	\begin{equation}\xymatrix{\mathbb R^0\ar@<+.5ex>[r]^0\ar@<-.5ex>[r]_0 & \mathbb R,}\end{equation}
and let $t$ be the coordinate on $\mathbb R$. We define two morphisms $\Phi,\Psi:\mathbf Y\rightarrow\mathbf X$ as follows:	\begin{equation}\xymatrix{\Phi:t\ar@{|->}[r] & 0, & \Psi:t\ar@{|->}[r] & u_1u_2-u_3^2.}\end{equation}
Since the class of $u_1u_2-u_3^2$ in $\pi_1(A_\bullet)$ is not trivial, clearly $\Phi$ and $\Psi$ are not homotopic in $\mathbf{Man}_{ft}$, yet $\mathbf T(\Phi)=\mathbf T(\Psi)$.


\begin{thebibliography}{Bour}
\bibitem[BN11]{BN11} D.Borisov, J.Noel. {\it Simplicial approach to derived differential geometry.} arXiv:1112.0033 [math.DG]
\bibitem[FO99]{FO99} K.Fukaya, K.Ono. {\it Arnold conjecture and Gromov--Witten invariant.} Topology 38, no. 5, pages 933-1048, (1999).
\bibitem[FOOO09]{FOOO09} K.Fukaya, Y-G. Oh, H.Ohta, K.Ono. {\it Lagrangian intersection Floer theory -- anomaly and obstruction I-II}, AMS/IP Studies in Advanced Mathematics, vol 46, Amer. Math. Soc./International Press, (2009).
\bibitem[FOOO12]{FOOO12} K.Fukaya, Y-G. Oh, H.Ohta, K.Ono. {\it Technical details on Kuranishi structure and virtual fundamental chain.} arXiv:1209.4410 
\bibitem[GJ99]{GJ99} P.Goerss, J.Jardine. {\it Simplicial homotopy theory.} Birkh\"{a}user Prog. in Mathematics 174, XVI+510 pages (1999)
%\bibitem[Go60]{Go60} R. Godement. {\it Topologie alg\'{e}brique et th\'{e}orie des faisceaux.} viii+283 pages, Hermann (1960).
%\bibitem[GS]{GS} P.Goerss, K.Schemmerhorn. {\it Model categories and simplicial methods.} 
\bibitem[Ho99]{Ho99} M.Hovey. {\it Model categories.} AMS Mathematical surveys and monographs, Volume 63, XII+209 pages (1999)
\bibitem[Jo12]{Jo12} D.Joyce. {\it $D$-manifolds and $d$-orbifolds: a theory of derived differential geometry.} http://people.maths.ox.ac.uk/joyce/dmanifolds.html
\bibitem[Lu09]{Lu09} J.Lurie. {\it Higher topos theory.} Annals of Mathematics Studies 170. Princeton University Press (2009).
\bibitem[MR91]{MR91} I.Moerdijk, G.E.Reyes. {\it Models for smooth infinitesimal analysis.} Springer, X+399 pages (1991).
\bibitem[MW12]{MW12} D.McDuff, K.Wehrheim. {\it Smooth Kuranishi atlases with trivial isotropy.} arXiv:1208.1340 [math.SG]
\bibitem[Qu67]{Qu67} D.Quillen. {\it Homotopical algebra.} Lecture Notes in Mathematics 43, Springer (1967)
\bibitem[Sp10]{Sp10} D.I.Spivak. {\it Derived smooth manifolds.} Duke Mathematical Journal, Vol. 153, No. 1, pp. 55-128 (2010).
\bibitem[TV05]{TV05} B.To\"{e}n, G.Vezzosi. {\it Homotopical algebraic geometry I. Topos theory.} Adv. Math. 193, no. 2, pp. 257-372 (2005).

\end{thebibliography}
\end{document}